\documentclass[11pt,reqno]{amsart}
\usepackage{amsfonts,amssymb,amsmath,amsopn,amsthm,graphicx}
\usepackage{amsxtra,mathrsfs}
\usepackage[mathscr]{eucal}
\usepackage{colordvi}
\usepackage[usenames,dvipsnames]{color}
\usepackage{mathtools}
\usepackage[colorlinks=true]{hyperref}
\usepackage{setspace}
\usepackage{enumitem}


\setlength{\voffset}{-.7truein}
\setlength{\textheight}{8.8truein}
\setlength{\textwidth}{6.8truein}
\setlength{\hoffset}{-0.85truein}

\linespread{1} 
\numberwithin{equation}{section}

\newtheorem{thm}{Theorem}[section]

\newtheorem{lem}[thm]{Lemma}
\newtheorem{prop}[thm]{Proposition}

\newtheorem{definition}[thm]{Definition}

\newtheorem{rem}[thm]{Remark}
\newtheorem{rems}[thm]{Remarks}

\theoremstyle{definition}
\theoremstyle{remark}

\newcommand{\s}{\mathbb{S}}

\newcommand{\N}{\mathbb{N}}
\newcommand{\R}{\mathbb{R}}

\newcommand{\loc}{{\rm loc}}


\newcommand{\jap}[1]{\langle{#1}\rangle}

\newcommand\footnoteref[1]{\protected@xdef\@thefnmark{\ref{#1}}\@footnotemark}


\newcommand{\dx}{\,\mathrm{d}x}

\newcommand{\ds}{\,\mathrm{d}s}
\newcommand{\dr}{\,\mathrm{d}r}

%

\def\ga{\alpha}            \def\gg{\gamma}

            \def\gl{\lambda}

\def\Gg{\Gamma}     \def\Gd{\Delta}

\def\Gw{\Omega}              

\def\ga{\alpha}            \def\gg{\gamma}

            \def\gl{\lambda}

\def\Gg{\Gamma}     \def\Gd{\Delta}

\def\Gw{\Omega}              

\title[weak perturbation of critical  quasilinear operators]{On the behavior of the ground state energy under weak perturbation of critical  quasilinear operators in $\R^N$}

\author {Ujjal Das}

\address {Ujjal Das, BCAM -- Basque Center for Applied Mathematics,
48009 Bilbao, Spain}

\email {udas@bcamath.org, getujjaldas@gmail.com}

\author {Hynek Kova\v{r}\'{\i}k}

\address{Hynek Kova\v{r}\'{\i}k, DICATAM, Sezione di Matematica, Universit\`a degli studi di Brescia, Italy}
\email {hynek.kovarik@unibs.it}

\author{Yehuda Pinchover}

\address{Yehuda Pinchover,
Department of Mathematics, Technion - Israel Institute of
Technology,   Haifa, Israel}

\email{pincho@techunix.technion.ac.il}

\begin{document}

\begin{abstract}

\medskip
		We consider a critical quasilinear operator $-\Delta_p u +V|u|^{p-2}u$ in $\mathbb{R}^N$ perturbed by a weakly coupled potential. For $N>p$ we find the leading asymptotic 
of the lowest eigenvalue of such an operator in the weak coupling limit separately for $N>p^2 $ and $N\leq p^2$.\\[1mm] 
\noindent  {\em 2020 Mathematics  Subject  Classification.
	Primary 35J92; Secondary 35B38, 35J10.}\\[-3mm]

\noindent {\em Keywords:}  Agmon ground state, comparison principle, criticality theory, $p$-Laplacian, quasilinear equations, simplified energy.
\end{abstract}
\maketitle
\section{\bf Introduction and main results }\label{sec_s_u}

\subsection{The set up} It is a well-known fact that  $-\Gd_p$, $1<p<\infty$, the celebrated $p$-Laplace operator,  is subcritical in $\R^N$ if and only if $p<N$. Hence, 
there exist potentials $V\not\geq 0$ such that the functional 
\begin{equation} 
Q_0[u] := \int_{\R^N} |\nabla u|^p \dx + \int_{\R^N} V |u|^p\dx  \qquad u\in W^{1,p}(\R^N),
\end{equation}  
is {\em critical} \cite[Proposition 4.4]{pt}. Being critical means that the associated equation $-\Gd_p u +V|u|^{p-2}u=0$ in $\R^N$ admits a unique (up to a multiplicative constant) positive supersolution $\phi_0\in L^p_{\loc}(\R^N)$ which is in fact positive solution $-\Gd_p u +V|u|^{p-2}u=0$ in $\R^N$. Such a solution is called an {\em Agmon ground state}.  
Moreover, $\phi_0$ is a positive solution of minimal growth at infinity (see Definition \ref{min_gr}). Accordingly, such potentials are called critical. In the sequel, the (Agmon) ground state of $Q_0$ will be normalized so that 
\begin{equation} \label{phi0-norm}
\phi_0(0)=1.
\end{equation}
The corresponding quasilinear operator will be denoted by $-\Gd_p +V$.

We consider the energy functionals
\begin{equation} 
Q_{\alpha W}[u] := \int_{\R^N} |\nabla u|^p \dx + \int_{\R^N} V |u|^p\dx -\alpha \int_{\R^N} W |u|^p\dx, \qquad u\in W^{1,p}(\R^N),
\end{equation}  
where $V$ is a real valued critical potential, $W\in C_c(\R^N)$ and $\alpha>0$ is a coupling constant.  The associated variational problem for $Q_{\alpha W}$
then reads
\begin{equation} \label{lambda-def}
\lambda(\alpha) = \inf_{0\neq u\in W^{1,p}(\R^N)} \frac{Q_{\alpha W}[u]}{\|u\|_p^p}\, .
\end{equation}

We will assume that
\begin{equation} \label{condition}
\int_{\R^N} W \phi_0^p\dx >0. 
\end{equation}
Then, by \cite[Proposition 4.5]{pt} for any $\alpha>0$ we have $\lambda(\alpha)<0$. It can be  easily verified that $\gl: [0,\infty) \to \R$ is a continuous concave function of $\alpha$. 
Our aim is to study the asymptotic behavior of the ground state energy $\lambda(\alpha)$ as $\alpha\searrow 0$. 

The asymptotic behavior of $\lambda(\alpha)$ for small $\alpha$ was extensively studied in the linear case $p=2$. In low dimensions, for $N=1,2$, we know that $V=0$ is critical and  equation \eqref{lambda-def}
then  defines  the ground state energy of the Schr\"odinger operator $-\Delta - \alpha W$. In particular, it turns out, see  \cite{bgs,kl77,si}, that for sufficiently fast decaying $W$ we have 
\begin{equation} \label{eq:p=2}
\sqrt{-\lambda(\alpha)} \, = \, \frac \alpha2\int_{\R} W \dx - C_W\, \alpha^2 + o(\alpha^2), \quad \alpha\to 0, \qquad N=1, \ p=2, 
\end{equation}
with an explicit constant $C_W$ depending on $W$, see \cite{si}.  The proof of  \eqref{eq:p=2} uses the Birman-Schwinger principle and the explicit knowledge of the unperturbed resolvent. When suitably modified, 
this method 
can be  applied also to Schr\"odinger operators with long-range potentials \cite{bgs,kl79},  to higher order and fractional Schr\"odinger operators \cite{az1,az2,ha},  linear operators with degenerate zero eigenvalues \cite{ba}, and even to certain operators with complex-valued potentials, see e.g.~\cite{fk}.
Analogous problem in dimensions $N\geq 3$ and with $V\neq 0$ was treated in \cite{KS}.
  
  \smallskip
  
Considerably less is known about the non-linear case when $p\neq 2$.  Here the operator-theoretic method mentioned above is not available and a different approach is needed. 
In \cite{efk} the problem was studied for $N\leq p$ and critical potential $V=0$. It was shown there, with purely variational methods, that 
\begin{equation} \label{limit-efk}
\lim_{\alpha\to 0+}\, \alpha^{- \frac{p}{p-N}}\ \lambda(\alpha) \, = \, C_{N,p}
\left( \int_{\R^N} W\, \dx\right)^{\frac{p}{p-N}} \,, \qquad N<p,
\end{equation}
where $C_{N,p}$ is explicitly related to the best constant $S$ in the Sobolev  inequality 
$$
\| u \|_\infty^p \leq\, S\,  \| \nabla u \|_p^{N}\, 
\| u \|_p^{p- N} \qquad \forall u \in W^{1,p}(\R^N).$$
The border-line case $p=N$, in which $\lambda(\alpha)$ is exponentially small was also studied in \cite{efk}. Let us mention that similar variational approach was used also for certain linear 
operators, \cite{bcez, fmv, hhrv}. 

  \smallskip

In this paper we show, using a combination of variational and PDE techniques, how the main contribution to the asymptotic of $\lambda(\alpha)$ depends on $\alpha$ and $W$ in the case $N > p$ and $V\neq 0$. Similarly as in \cite{efk} the asymptotic order depends 
 on the relation between $N$ and $p$.  Our main results, when put together with those of \cite{efk} are summarized in Table \ref{table}. 
 For a more precise formulation  see Theorems \ref{thm-linear} and \ref{thm-superlinear} below.

\begin{figure}[!h]
\begin{tabular}{|c|c|c|}
\hline
\rule[-0.2cm]{0mm}{0.6cm}
Dimension &\ \ \ Leading order  of $\lambda(\alpha)$ & Critical potential \\
\hline\hline
\rule[-0.4cm]{0mm}{1cm}
$N<p$ &   $\qquad\displaystyle\alpha^{\frac{p}{p-N}}$ & $V=0$\\
\hline
\rule[-0.4cm]{0mm}{1cm}
$N=p$ & $\exp\Big[ \big(-c/\alpha\big)^{\frac{1}{N-1}}\Big] $ & $V=0$\\
\hline
\rule[-0.4cm]{0mm}{1cm}
$p<N<p^2$ & $\alpha^{\frac{p(p-1)}{N-p}}$ & $V\in C_c(\R^N)$\\
\hline
\rule[-0.4cm]{0mm}{1cm}
$N=p^2$ & $\displaystyle\frac{\alpha}{|\log \alpha|}$ & $V\in C_c(\R^N)$\\
\hline
\rule[-0.4cm]{0mm}{1cm}
$N>p^2$ & $\alpha $ & $V\in L^1(\R^N)\cap L^{\infty}(\R^N)$ satisfying \eqref{eq_Fuchs}\\
\hline
\end{tabular}
\caption{Asymptotic order of $\lambda(\alpha)$. The results for $N\leq p$ are due to  \cite{efk}. }
\label{table}
\end{figure}

\subsection{Main results} We will present our results separately for $N>p^2$ and $N\leq p^2$.

\begin{thm}\label{thm-linear}
Suppose that $V\in L^1(\R^N)\cap L^{\infty}(\R^N)$ 
is a critical potential for the $p$-Laplacian in $\R^N$ satisfying \eqref{eq_Fuchs}.
Let $\phi_0$ be the corresponding ground state and let $W\in C_c(\R^N)$ satisfies \eqref{condition}. If $N>p^2$, then  
\begin{equation} \label{pos_crit}
\lim_{\alpha\to 0_+} \alpha^{-1}\lambda(\alpha) = -\frac{\int_{\R^N} W  \phi_0^p\dx}{\|\phi_0\|_p^p}\, .
\end{equation}
\end{thm}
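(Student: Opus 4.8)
The plan is to show that $\phi_0$ itself is an almost-optimal test function, which gives the correct upper bound on $\lambda(\alpha)$, and then to establish a matching lower bound via a careful decoupling argument exploiting the hypothesis $N>p^2$. For the upper bound, first I would insert $u=\phi_0$ (suitably truncated to a large ball so that it lies in $W^{1,p}(\R^N)$, using that $\phi_0$ is a ground state of minimal growth and hence has controlled decay) into the Rayleigh quotient in \eqref{lambda-def}. Since $-\Delta_p\phi_0+V|\phi_0|^{p-2}\phi_0=0$, the first two terms of $Q_{\alpha W}[\phi_0]$ cancel up to the truncation error, which can be shown to be negligible because $N>p^2$ forces $\phi_0$ to decay fast enough that the gradient tail is $o(\alpha)$ — actually it is bounded independently of $\alpha$ and one lets the truncation radius tend to infinity first. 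This yields
\begin{equation}\label{eq:ub}
\lambda(\alpha)\ \le\ -\alpha\,\frac{\int_{\R^N}W\phi_0^p\dx}{\|\phi_0\|_p^p}\,\bigl(1+o(1)\bigr),\qquad \alpha\to 0_+.
\end{equation}

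For the lower bound I would use the ground-state substitution (Picone-type identity / the simplified energy $\mathscr{R}$ mentioned in the keywords): writing a general $u\in W^{1,p}(\R^N)$ as $u=\phi_0 v$, the criticality of $V$ gives a representation of $Q_0[\phi_0 v]$ as a nonnegative ``simplified energy'' functional in $v$ that controls $\|\nabla v\|_{p,\loc}$ and, crucially, vanishes only in the limit along the ground state. One then must show that a minimizing sequence $v_\alpha$ for the perturbed problem cannot escape to infinity or spread out: here is where $N>p^2$ enters decisively, since the relevant Sobolev/Hardy-type inequality with weight $\phi_0^p$ is available precisely in this regime, forcing $v_\alpha\to c$ in the appropriate local sense. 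Then $-\alpha\int W|\phi_0 v_\alpha|^p\ge -\alpha\int W\phi_0^p\,(c^p+o(1))$ and $\|u_\alpha\|_p^p\ge \|\phi_0\|_p^p\,(c^p+o(1))$, while $Q_0[\phi_0 v_\alpha]\ge 0$, which gives
\begin{equation}\label{eq:lb}
\liminf_{\alpha\to 0_+}\ \alpha^{-1}\lambda(\alpha)\ \ge\ -\frac{\int_{\R^N}W\phi_0^p\dx}{\|\phi_0\|_p^p}.
\end{equation}

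Combining \eqref{eq:ub} and \eqref{eq:lb} yields \eqref{pos_crit}. I expect the main obstacle to be the lower bound, specifically the compactness/non-concentration step: one needs quantitative control showing that a near-minimizer $u_\alpha$ of $Q_{\alpha W}$ is, after the substitution $u_\alpha=\phi_0 v_\alpha$, close to a constant multiple of $\phi_0$ on the support of $W$ with an error that is $o(1)$ as $\alpha\to 0$ — and that the ``simplified energy'' of $v_\alpha$ does not merely stay bounded but is in fact $o(\alpha)$, so that it does not corrupt the leading term. The hypothesis \eqref{eq_Fuchs} together with $V\in L^1\cap L^\infty$ and $N>p^2$ should supply the decay of $\phi_0$ (a Fuchsian-type asymptotic $\phi_0\sim |x|^{-(N-p)/(p-1)}$) that makes $\phi_0\in L^p$ and makes the weighted functional inequalities work; verifying these decay estimates and the attendant integrability is the technical heart of the argument.
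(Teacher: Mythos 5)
Your upper bound is essentially the paper's: the paper first shows (Lemma \ref{lem_mimimizer}, via a null-sequence argument and the decay estimate \eqref{eq_est}) that for $N>p^2$ the ground state satisfies $\phi_0\in W^{1,p}(\R^N)$, so no truncation is needed and $u=\phi_0$ gives the exact inequality $\lambda(\alpha)\le -\alpha\int W\phi_0^p/\|\phi_0\|_p^p$. Your lower bound, however, takes a genuinely different route from the paper, and it is there that the proposal has a real gap. The paper does not use the simplified energy at all in this theorem: it takes the actual minimizer $\phi_\alpha$ (which exists and is the Agmon ground state of the shifted critical operator $Q_{\alpha W-\lambda(\alpha)}$, again by Lemma \ref{lem_mimimizer}), drops $Q_0[\phi_\alpha]\ge 0$ to get $\lambda(\alpha)\ge -\alpha\int W\phi_\alpha^p/\|\phi_\alpha\|_p^p$, and then controls both integrals by PDE tools: the Harnack convergence principle gives $\phi_\alpha\to\phi_0$ in $L^\infty_{\loc}$ (hence convergence of $\int W\phi_\alpha^p$ since $W$ has compact support), and the comparison principle for solutions of minimal growth outside $\supp W$ gives the uniform domination $\phi_\alpha\le C\phi_0$ with $C$ independent of $\alpha$, so that $\|\phi_\alpha\|_p^p\to\|\phi_0\|_p^p$ by dominated convergence.

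The gap in your version is precisely the step you flag as "the main obstacle'': the claim that smallness of the simplified energy forces $v_\alpha\to c$ in a sense strong enough to give both $\int W\phi_0^pv_\alpha^p\to c^p\int W\phi_0^p$ and the Fatou bound $\liminf\|\phi_0v_\alpha\|_p^p\ge c^p\|\phi_0\|_p^p$ is not proved, and it is the entire content of the lower bound. Moreover, as sketched it cannot work for all $p$ in the stated range. For $p\ge 2$ one has $\big(v|\nabla\phi_0|+\phi_0|\nabla v|\big)^{p-2}\ge \big(\phi_0|\nabla v|\big)^{p-2}$, so the right-hand side of \eqref{two-sided-pr} dominates $\int\phi_0^p|\nabla v|^p\dx$ and a weighted local Poincar\'e inequality can then be run; but for $1<p<2$ the exponent $p-2$ is negative and the simplified energy does \emph{not} control $\int\phi_0^p|\nabla v|^p\dx$, so no bound on $\nabla v_\alpha$ follows. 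This is exactly why the paper confines its variational lower-bound argument (Proposition \ref{prop:LB-new} in the Appendix) to $p\ge 2$. Since Theorem \ref{thm-linear} is asserted for all $1<p$ with $p^2<N$, your argument would at best prove a restricted version of the statement; to cover $p<2$ you need something like the paper's comparison-principle route, which in turn requires the existence and minimal-growth property of the minimizer $\phi_\alpha$ — a fact you neither prove nor cite.
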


To state our results in the case $N\leq p^2$ we need  some notation. For a positive functions $f$  we write 
$$
|\lambda(\alpha)| \asymp f(\alpha) \qquad \alpha\to 0_+\\[4pt]
$$
if there exist positive constants $K_1$ and $K_2$, {\bf independent of} $W$, such that\\[2pt] 
\begin{equation} \label{k12}
K_1 \leq \liminf_{\alpha\to 0_+} \frac{|\lambda(\alpha)|}{f(\alpha)} \leq \limsup_{\alpha\to 0_+} \frac{|\lambda(\alpha)|}{f(\alpha)}  \leq K_2\,.
\end{equation}
We then have

\begin{thm}\label{thm-superlinear}
Let $V, W\in  C_c(\R^N)$. Suppose that $V$ critical for the $p$-Laplacian in $\R^N$, and that $W\in C_c(\R^N)$ satisfies \eqref{condition}. 
\begin{enumerate}  
\item If $p<N<p^2$, then 
\begin{equation} \label{N<p^2}
|\lambda(\alpha)|\ \asymp\   \alpha^{\frac{p(p-1)}{N-p}}\  \left(\int_{\R^N} W \phi_0^p \dx\right)^{{\frac{p(p-1)}{N-p}}} \qquad \alpha\to 0_+. 
\end{equation}

\item  If $N=p^2$, then 
\begin{equation} \label{N=p^2}
|\lambda(\alpha)|\ \asymp\  \frac{\alpha}{|\log\alpha|}\, \int_{\R^N} W \phi_0^p \dx \qquad\qquad \quad\ \alpha\to 0_+. 
\end{equation}
\end{enumerate}
\end{thm}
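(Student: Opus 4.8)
The plan is to establish matching lower and upper bounds for $|\lambda(\alpha)|$, the former soft and variational, the latter requiring a quantitative description of the perturbed ground state. Two facts are used throughout: since $V\in C_c(\R^N)$ and $\phi_0$ is a positive solution of minimal growth, $\phi_0$ is $p$-harmonic outside a fixed ball $B_{R_0}\supset\supp V\cup\supp W$, hence $\phi_0(x)\asymp\jap{x}^{-(N-p)/(p-1)}$ on $\R^N$ (so $\phi_0^p\asymp\jap{x}^{-a}$ with $a=p(N-p)/(p-1)$, and $\phi_0\notin L^p(\R^N)$ precisely because $N\le p^2$); and for a standard cutoff $\chi_R$ ($\chi_R\equiv1$ on $B_R$, $\supp\chi_R\subset B_{2R}$) one has $\int_{\R^N}\phi_0^p|\nabla\chi_R|^p\dx\asymp R^{-(N-p)/(p-1)}$ together with $\|\phi_0\chi_R\|_p^p\asymp R^{(p^2-N)/(p-1)}$ for $p<N<p^2$, resp.\ $\|\phi_0\chi_R\|_p^p\asymp\log R$ for $N=p^2$. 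For the lower bound on $|\lambda(\alpha)|$ I would test \eqref{lambda-def} with $u_R=\phi_0\chi_R$: by the simplified-energy estimate $Q_0[\phi_0\chi_R]\lesssim\int_{\R^N}\phi_0^p|\nabla\chi_R|^p\dx$ and since $\int_{\R^N}Wu_R^p\dx=\int_{\R^N}W\phi_0^p\dx$ once $B_R\supset\supp W$,
\[
\lambda(\alpha)\ \le\ \frac{C\,R^{-(N-p)/(p-1)}-\alpha\int_{\R^N}W\phi_0^p\dx}{c\,R^{(p^2-N)/(p-1)}}\,,
\]
and the choice $R\asymp\big(\alpha\int_{\R^N}W\phi_0^p\dx\big)^{-(p-1)/(N-p)}$, taken large enough that the numerator is $\le-\tfrac12\alpha\int_{\R^N}W\phi_0^p\dx$, gives $\lambda(\alpha)\le-K_1\big(\alpha\int_{\R^N}W\phi_0^p\dx\big)^{p(p-1)/(N-p)}$ with $K_1$ depending only on $N,p$ and the constants for $\phi_0$; for $N=p^2$ the denominator is $\log R\asymp|\log\alpha|$, giving the corresponding bound in Theorem \ref{thm-superlinear}.

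For the upper bound on $|\lambda(\alpha)|$ I would first show $\lambda(\alpha)$ is attained by a positive minimizer $\phi_\alpha\in W^{1,p}(\R^N)$ (concentration compactness applies since $\lambda(\alpha)<0$ lies strictly below the energy at infinity, which equals $0$), normalised by $\phi_\alpha(0)=1=\phi_0(0)$; then $-\Delta_p\phi_\alpha=-|\lambda(\alpha)|\phi_\alpha^{p-1}$ outside $B_{R_0}$. The scaling $y=|\lambda(\alpha)|^{1/p}x$ turns this into the mass-one equation $-\Delta_p U=-U^{p-1}$, whose relevant positive solution behaves like $|y|^{-(N-p)/(p-1)}$ for small $|y|$ and decays exponentially for large $|y|$; combined with the comparison principle for $-\Delta_p$ (comparing $\phi_\alpha$ with multiples of $\phi_0$ and with exponential barriers) this should yield, with constants uniform in $\alpha$, $\phi_\alpha(x)\asymp\phi_0(x)$ for $R_0\le|x|\le R_\alpha$ and $\phi_\alpha(x)\lesssim\phi_0(x)e^{-c(|x|/R_\alpha-1)_+}$ on $\R^N$, where $R_\alpha:=|\lambda(\alpha)|^{-1/p}$. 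Consequently $\|\phi_\alpha\|_p^p\gtrsim R_\alpha^{(p^2-N)/(p-1)}$ (resp.\ $\gtrsim\log R_\alpha$ if $N=p^2$), while $\int_{\R^N}W\phi_\alpha^p\dx\to\int_{\R^N}W\phi_0^p\dx$ because $\phi_\alpha\to\phi_0$ in $C^1_{\loc}(\R^N)$. Feeding this into the identity $\lambda(\alpha)\|\phi_\alpha\|_p^p=Q_0[\phi_\alpha]-\alpha\int_{\R^N}W\phi_\alpha^p\dx$ together with $Q_0[\phi_\alpha]\ge0$ — i.e.\ $|\lambda(\alpha)|\,\|\phi_\alpha\|_p^p\le\alpha\int_{\R^N}W\phi_\alpha^p\dx$ — and using $R_\alpha=|\lambda(\alpha)|^{-1/p}$, one solves for $|\lambda(\alpha)|$ to obtain $|\lambda(\alpha)|\le K_2\big(\alpha\int_{\R^N}W\phi_0^p\dx\big)^{p(p-1)/(N-p)}$ when $p<N<p^2$, resp.\ $|\lambda(\alpha)|\,|\log\alpha|\le K_2\,\alpha\int_{\R^N}W\phi_0^p\dx$ when $N=p^2$; $K_2$ is independent of $W$ because the entire $W$-dependence is carried by $\int_{\R^N}W\phi_0^p\dx$.

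The main obstacle is the quantitative a priori analysis of $\phi_\alpha$: proving $\phi_\alpha\asymp\phi_0$ up to the sharp scale $R_\alpha\asymp|\lambda(\alpha)|^{-1/p}$ with exponential decay beyond it, and with constants that do not degenerate as $\alpha\to0$, so that the $K_i$ are genuinely uniform in $W$. This requires careful use of the comparison principle for the quasilinear operator and the construction of matching sub/supersolutions built from $\phi_0$ and from the exterior profiles of $-\Delta_p U=-U^{p-1}$; the borderline case $N=p^2$ needs extra care because of the logarithmically divergent mass $\int_{B_{R_\alpha}}\phi_0^p\dx\asymp\log R_\alpha$. Secondary technical points are the existence and regularity of $\phi_\alpha$, its convergence $\phi_\alpha\to\phi_0$, and the justification of the simplified-energy inequality used in the lower bound.
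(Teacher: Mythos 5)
Your overall strategy coincides with the paper's: a test-function computation for the lower bound on $|\lambda(\alpha)|$, and, for the upper bound, the identity $|\lambda(\alpha)|\,\|\phi_\alpha\|_p^p\le\alpha\int W\phi_\alpha^p\dx$ combined with a quantitative blow-up estimate for $\|\phi_\alpha\|_p^p$ coming from a pointwise lower bound on the minimizer up to the scale $R_\alpha\asymp|\lambda(\alpha)|^{-1/p}$. The first half is correct and complete as written (your compactly supported linear cutoff is even a bit simpler than the paper's exponentially decaying test function, and the arithmetic checks out in both regimes $p<N<p^2$ and $N=p^2$; just note that for $p>2$ the simplified energy produces the additional term $\int\phi_0^2|\nabla\chi_R|^2\chi_R^{p-2}|\nabla\phi_0|^{p-2}\dx$, which happens to have the same order $R^{-(N-p)/(p-1)}$).

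The genuine gap is in the second half, and it is exactly the step you yourself flag as "the main obstacle": the claim that $\phi_\alpha(x)\asymp\phi_0(x)$ for $R_0\le|x|\le R_\alpha$ with constants uniform in $\alpha$ is asserted ("this should yield") but not proved, and it cannot be extracted merely from the scaling limit $-\Delta_p U=-U^{p-1}$, because the comparison principle transfers information only one way and only from explicit sub/supersolutions whose differential inequalities must be verified. This is the content of the paper's Lemmas 3.5 and 3.7: one checks by direct computation that $v(x)=|x|^{-\nu}e^{-\mu_\alpha\beta|x|}$ satisfies $-\Delta_p v-\lambda(\alpha)v^{p-1}\le0$ on $B_R^c$, and the sign of the quantity $A_\nu=(N-1)-2\nu(p-1)$ forces a case distinction (the choice $\nu=\nu_1=(N-1)/(p-1)$ works for $p\ge2-1/N$, while $\nu=\nu_0=(N-p)/(p-1)$ with a large $\beta$ works for $p<(N+1)/2$); in the first case the resulting bound has the wrong algebraic prefactor $|x|^{-\nu_1}$, and a \emph{second} comparison on $B_{R_\alpha}^c$ is needed to upgrade it to $|x|^{-\nu_0}$ before the $L^p$-mass can be computed. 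The boundary matching on $\partial B_R$ uniformly in $\alpha$ (via Harnack convergence $\phi_\alpha\to\phi_0$) and the validity of the comparison principle on the unbounded exterior domain also require justification. Finally, in the case $N=p^2$ your estimate yields $|\lambda(\alpha)|\,|\log|\lambda(\alpha)||\lesssim\alpha\omega$, and converting $|\log|\lambda(\alpha)||$ into $|\log\alpha|$ needs the preliminary superlinearity $\lambda(\alpha)/\alpha\to0$ (the paper's Proposition 3.3, which uses $\phi_0\notin L^p$ and Fatou); this bootstrap should be made explicit. Until the uniform pointwise lower bound on $\phi_\alpha$ is actually established, the upper bound on $|\lambda(\alpha)|$ — and hence the theorem — is not proved.
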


The proof of Theorem \ref{thm-linear} is given in Section \ref{sec_minimizer}. In Section \ref{sec_lsc} we prove the upper and the lower
bounds needed for the proof of Theorem \ref{thm-superlinear}. 

\begin{rems}
{\em Some comments concerning the above theorems are in order.
\begin{enumerate}
 \item The infimum in \eqref{lambda-def} is attained as soon as $\alpha>0$ and $W$ satisfies \eqref{condition}, see  Lemma \ref{lem_mimimizer}.
To prove the lower bounds on $\lambda(\alpha)$ in  \eqref{N<p^2} and \eqref{N=p^2} we first obtain an order-sharp estimate on the blow-up of the $L^p-$norm of the associated minimizer $\phi_\alpha$. 
This is achieved by an iterated application of the comparison principle, see Propositions  \ref{prop:LB1} and  \ref{prop:LB2}.

\item The upper bounds on $\lambda(\alpha)$ are obtained by a suitable choice of a family of test functions.

\item The constants $K_1$ and $K_2$ relative to equations \eqref{N<p^2} and \eqref{N=p^2}, cf.~\eqref{k12}, depend on $V$ but not on $W$. 
Confronting the right hand sides of \eqref{N<p^2} and \eqref{N=p^2} with those of  \eqref{eq:p=2} and \eqref{limit-efk} it is important to notice 
that for $N\leq p$ and $V=0$ we have $\phi_0=1$.

\item If $\int_{\R^N} W \phi_0^p \dx  <0$, then by the criticality theory (see \cite[Proposition 4.5]{pt}), we have $\lambda(\alpha)=0$ for $\alpha>0$ small enough,  while $\lambda(\alpha)<0$ for any $\alpha>0$ even if $\int_{\R^N} W \phi_0^p \dx =0$.  So far, the asymptotic behavior of $\lambda(\alpha)$ in the latter case is known only in the linear case $p=2$, \cite{si}.

\item For $p=2$ our results agree with those obtained in \cite{KS} for Schr\"odinger operators.

\item It is natural to conjecture that the results of Theorems \ref{thm-linear} and \ref{thm-superlinear}  hold  even without assuming $W\in C_c(\R^N)$.
Indeed, the upper bounds in the above theorems hold under much weaker conditions on $W$, see equation \eqref{upperb} and Propositions \ref{prop: upperb-1} and \ref{prop: upperb-2}. 
The condition $W\in C_c(\R^N)$ can be removed even in the lower bound as long as $p\geq 2$, cf.~Proposition \ref{prop:LB-new} in  Appendix.
However, if $p<2$, then the hypothesis that $W$ is compactly supported is fundamental for our approach in the proofs of the lower bounds. 
\end{enumerate} }
\end{rems}

\subsection{Preliminaries and notation}
The following two-sided estimate of the energy functional $Q_0$ will be important for our analysis. It states that if $\phi_0$ is as above, then for every $0\leq u \in W^{1,p}_{\rm loc}(\R^N)$,  we have 
\begin{equation} \label{two-sided-pr}
Q_0[u] \, \asymp\, \int_{\R^N} \phi_0^2\, |\nabla v|^2 \big( v |\nabla \phi_0|+\phi_0 |\nabla v|\big)^{p-2}\dx, \qquad v = \frac{u}{\phi_0}\, ,
\end{equation}
where the equivalent constants depend only on $p$ and $N$, and provided that the right-hand side of \eqref{two-sided-pr} is finite. The functional in the right hand side of the above equivalence is called the {\it simplified energy functional}. For the proof of \eqref{two-sided-pr} we refer to  \cite[Lemma 2.2]{ptt} and \cite[Lemma 3.4]{pr}.
We also recall the Sobolev inequality with critical exponent,
\begin{equation} \label{sobolev}
\|u\|_{p^*} \, \leq \, C(p,N)\, \|\nabla u\|_p \qquad  u\in W^{1,p}(\R^N)\, ,
\end{equation}
where
\begin{equation} 
p^* = \frac{Np}{N-p}\, .
\end{equation}
We denote by $B_R(0)$ the open ball of radius centered in zero and by $B_R^c(0)=\R^N\setminus \overline{B_R(0)}$.

We also recall some of the notions from the quasilinear criticality theory that will be used in this article. 
\begin{definition}[Positive solution of minimal growth at infinity] \label{min_gr}
		{\em Let $K_0$ be a compact set in $\R^N$ such that $\R^N\setminus K_0$ is connected and $\omega \in L^{q}_{\loc}(\R^N)$, where $q>\max\{N/p,1\}$.  A positive solution $u$
		of the equation $[-\Delta_p+\omega](w)=0$ in $\R^N\setminus K_0$ is said to be a
		{\it  positive solution of minimal growth in a neighborhood of
			infinity in} $\R^N$ if
		for any compact set $K$ in $\R^N$, with a smooth boundary, such
		that $\R^N\setminus K$ is connected and  $K_0 \Subset \mathrm{int}(K)$, and any positive supersolution
		$v\in C((\R^N\setminus K)\cup
		\partial K)$ of the equation $[-\Delta_p +\omega](w)=0$ in $\R^N\setminus K$,
		the inequality $u\le v$ on $\partial K$ implies that $u\le v$ in
		$\R^N\setminus K$.

A positive solution $u$ of minimal growth at infinity with respect to $K_0=\emptyset$ is called a {\em global minimal solution}. }
\end{definition}

It turns out that $-\Delta_p +\omega$ admits a global minimal solution in $\R^N$ if and only if $-\Delta_p +\omega$ is critical in $\R^N$ (\cite[Theorem~5.9]{Yehuda_Georgios}). Hence, a global minimal solution is an Agmon ground state of the corresponding critical operator $-\Delta_p +\omega$. Moreover, $-\Delta_p +\omega$ is critical in $\R^N$ if and only if it admits a null-sequence in $\R^N$, i.e., a nonnegative sequence $(\phi_n) \in W^{1,p}(\R^N) \cap C_c(\R^N)$   satisfying the following:
		\begin{itemize}
			\item there exists a subdomain $O \Subset \R^N$ such that $\|\phi_n\|_{L^p(O)}  \asymp 1$ for all $n \in \N,$ and
			\item $\displaystyle\lim_{n \rightarrow \infty} \left[ \int_{\R^N} (|\nabla \phi_n|^p + \omega |\phi_n|^p) \dx\right]=0$. 
		\end{itemize} Any null-sequence converges weakly in $L^{p}_\loc(\Gw)$ to the unique (up to a multiplicative constant) positive (super)solution of the equation $[-\Delta_p +\omega](w)=0$ in $\R^N$, hence, it converges to the ground state.  Furthermore,   there exists a null-sequence which converges locally uniformly in $\R^N$ to the ground state \cite{Yehuda_Georgios}.

\medskip

\noindent The following proposition plays a crucial role in the proof of our main results.

\begin{prop} \label{prop-Lp}
Let $p<N$ and let $V\in L^1(\R^N)\cap L^{\infty}(\R^N)$  be a critical potential for the $p$-Laplacian in $\R^N$ satisfying the following Fuchsian type behavior at infinity
\begin{equation}\label{eq_Fuchs}
|V(x)|\leq  C\jap x^{-p} \quad x\in \R^N, \qquad \mbox{where } \jap x:=\sqrt{1+|x|^2}\, . 
\end{equation}
Then the ground state $\phi_0$ of 
the operator $-\Delta_p + V$ satisfies
$\phi_0 \in L^p(\R^N)$. Moreover,  
$$\phi_0(x) \asymp\jap x^{(p-N)/(p-1)} \qquad  x\in \R^N.$$
In particular, $\phi_0 \in L^p(\R^N)$ if and only if $p^2<N$. 
\end{prop}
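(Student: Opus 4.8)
\emph{Strategy.} Since $p<N$, the function $\mu(x):=|x|^{(p-N)/(p-1)}$ is $p$-harmonic in $\R^N\setminus\{0\}$ and is, up to a multiplicative constant, the fundamental solution of $-\Delta_p$ in $\R^N$; in particular $\mu$ is a positive solution of minimal growth in a neighbourhood of infinity for $-\Delta_p$, and $\mu(x)\asymp\jap x^{(p-N)/(p-1)}$ for $|x|\ge 1$. The entire content of the proposition is thus the two-sided bound $\phi_0(x)\asymp\jap x^{(p-N)/(p-1)}$ on $\R^N$: granting it, $\|\phi_0\|_p^p\asymp 1+\int_{|x|\ge1}|x|^{p(p-N)/(p-1)}\dx\asymp 1+\int_1^\infty r^{\,p(p-N)/(p-1)+N-1}\dr$, and the last integral converges precisely when $p(p-N)/(p-1)+N<0$, i.e.\ when $p^2<N$, which is the asserted dichotomy. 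I would prove the two pointwise bounds by applying the definition of minimal growth (Definition \ref{min_gr}) twice, with comparison functions controlled by $\mu$ via a perturbative argument: once with $\phi_0$ itself, which is the minimal growth solution of the ground state equation $-\Delta_p u+V|u|^{p-2}u=0$, and once with auxiliary minimal growth solutions of the operators $-\Delta_p+V^{+}$ and $-\Delta_p-V^{-}$, where $V^{\pm}:=\max\{\pm V,0\}$.

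\emph{The perturbative input (the hard part).} The key observation is that the hypotheses $V\in L^1(\R^N)\cap L^\infty(\R^N)$ and $|V(x)|\le C\jap x^{-p}$ force $V^{\pm}$ to be \emph{small perturbations} of $-\Delta_p$ near infinity. Indeed $V\in L^{N/p}(\R^N)$ because $|V|^{N/p}\le\|V\|_\infty^{(N/p)-1}|V|$, and $\|V\|_{L^{N/p}(B_R^c(0))}\to0$ as $R\to\infty$; moreover $L^{N/p}$ is the scaling--critical class for $-\Delta_p$, as the Sobolev inequality \eqref{sobolev} makes clear. Hence, for $R$ large, $-\Delta_p+V^{+}$ and $-\Delta_p-V^{-}$ are subcritical in $B_R^c(0)$, and their positive solutions of minimal growth in a neighbourhood of infinity, denoted $G^{+}$ and $G^{-}$, satisfy $G^{\pm}(x)\asymp|x|^{(p-N)/(p-1)}$ as $|x|\to\infty$. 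Two of the four inequalities here are elementary: since $G^{+}$ is a subsolution and $G^{-}$ a supersolution of $-\Delta_p$ on $B_R^c(0)$, comparison with the $p$-harmonic function $\mu$ gives $G^{+}(x)\le C|x|^{(p-N)/(p-1)}$ and $G^{-}(x)\ge c|x|^{(p-N)/(p-1)}$. The opposite bounds, $G^{+}(x)\ge c|x|^{(p-N)/(p-1)}$ and $G^{-}(x)\le C|x|^{(p-N)/(p-1)}$, are the delicate ones: a pure power $|x|^{-\beta}$ ceases to be a supersolution of $-\Delta_p-V^{-}$ near infinity once $C$ exceeds an explicit threshold, so the smallness of $V$ must be used in an integrated rather than a pointwise way. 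This is precisely where the small-perturbation theory of criticality (comparability of the Green functions of $-\Delta_p$ and $-\Delta_p\pm V^{\mp}$ on $B_R^c(0)$, via finiteness and, for large $R$, smallness of $\sup_{x}\int_{B_R^c(0)}\tfrac{G(x,y)\,G(y,x_0)}{G(x,x_0)}\,|V(y)|\dy$), equivalently the theory of Fuchsian--type $p$-Laplacian equations, enters; I expect this step to be the main obstacle.

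\emph{Conclusion, assuming the perturbative input.} Fix $R$ as above. On $B_R^c(0)$ one has $-\Delta_p G^{-}=V^{-}(G^{-})^{p-1}$, so $-\Delta_p G^{-}+V\,(G^{-})^{p-1}=(V+V^{-})(G^{-})^{p-1}=V^{+}(G^{-})^{p-1}\ge 0$, i.e.\ $G^{-}$ is a positive supersolution of the ground state equation there; rescaling $G^{-}$ (using the $(p-1)$-homogeneity of $-\Delta_p$, which preserves this property) so that $\phi_0\le G^{-}$ on $\partial B_R(0)$, and using that $\phi_0$ has minimal growth in a neighbourhood of infinity, Definition \ref{min_gr} yields $\phi_0\le G^{-}$ on $B_R^c(0)$. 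Conversely, $-\Delta_p\phi_0+V^{+}\phi_0^{p-1}=(V^{+}-V)\phi_0^{p-1}=V^{-}\phi_0^{p-1}\ge 0$, so $\phi_0$ is a positive supersolution of $-\Delta_p u+V^{+}|u|^{p-2}u=0$ on $B_R^c(0)$; rescaling $G^{+}$ so that $G^{+}\le\phi_0$ on $\partial B_R(0)$ and using that $G^{+}$ has minimal growth for $-\Delta_p+V^{+}$, Definition \ref{min_gr} (with $\phi_0$ as the competing supersolution) gives $G^{+}\le\phi_0$ on $B_R^c(0)$. Combining these with $G^{\pm}(x)\asymp|x|^{(p-N)/(p-1)}$ yields $c|x|^{(p-N)/(p-1)}\le\phi_0(x)\le C|x|^{(p-N)/(p-1)}$ for $|x|\ge R$; the Harnack inequality on $\overline{B_R(0)}$ then upgrades this to $\phi_0(x)\asymp\jap x^{(p-N)/(p-1)}$ on all of $\R^N$, and the $L^p$ statement follows as in the first paragraph.
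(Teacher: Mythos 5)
The paper's own proof of this proposition is a single citation: the two-sided estimate $\phi_0(x)\asymp\jap x^{(p-N)/(p-1)}$ is taken directly from the Fuchsian theory of \cite[Theorem~1.17]{fp} (see also \cite{CD} and \cite{ADS}), applied to the sign-changing potential $V$ itself, and the $L^p$ dichotomy then follows by exactly the integration you perform, which is correct ($p(p-N)/(p-1)+N<0\iff p^2<N$). Your proposal reduces the proposition to the same external input, but stated for the sign-definite operators $-\Delta_p+V^{+}$ and $-\Delta_p-V^{-}$ rather than for $-\Delta_p+V$ directly, and then transfers the bounds to $\phi_0$ by two applications of Definition~\ref{min_gr}. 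That transfer is correct (the supersolution computations $-\Delta_p G^{-}+V(G^{-})^{p-1}=V^{+}(G^{-})^{p-1}\ge0$ and $-\Delta_p\phi_0+V^{+}\phi_0^{p-1}=V^{-}\phi_0^{p-1}\ge0$ are fine, as is the Harnack step on the compact part), but it is also superfluous: the Fuchsian result the paper invokes does not require $V$ to have a sign, so the decomposition $V=V^{+}-V^{-}$ buys nothing over citing it for $\phi_0$ directly.

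The genuine issue is that the crux --- the lower bound for $G^{+}$ and the upper bound for $G^{-}$, which you yourself flag as ``the main obstacle'' --- is left unproven, and the mechanism you propose for it (comparability of Green functions via smallness of $\sup_x\int G(x,y)G(y,x_0)|V(y)|\,\mathrm{d}y/G(x,x_0)$) is the 3G/small-perturbation criterion of the \emph{linear} criticality theory; for $p\neq2$ there is no Green function in this sense, the operator is not additive in the potential, and that route is not available. The correct tool is the quasilinear classification of positive solutions of Fuchsian-type $p$-Laplacian equations near a singular point, which is precisely \cite[Theorem~1.17]{fp}. So what you have is a correct (if roundabout) reduction of the proposition to the result the paper cites, not an independent proof of it: as written the hard analytic step remains a black box, and the particular box you point to would only open for $p=2$.
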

\begin{proof}
It follows from  \cite[Theorem~1.17]{fp}
 that the Agmon ground state $\phi_0$ satisfies 
\begin{equation}\label{eq_est}
\phi_0(x) \asymp \jap x^{(p-N)/(p-1)} , 
\end{equation}
 see also \cite[Theorem~1.1]{CD} and  \cite{ADS}. 
 
\end{proof}

\section{\bf The case $p^2< N$}
\label{sec_minimizer}
In this section we give the proof of Theorem \ref{thm-linear}. 
The next result ensures the existence of a minimizer for $\ga \geq0$ in case  $p^2<N$. 

\begin{lem} \label{lem_mimimizer}
Let $p^2<N$ and $V\in L^1(\R^N)\cap L^{\infty}(\R^N)$, be a critical potential for the $p$-Laplacian in $\R^N$ satisfying \eqref{eq_Fuchs}. Assume that $W \in C_c(\R^N)$ satisfies  \eqref{condition}. Then for any  $\ga \geq  0$ there is a positive function $\phi_\ga\in W^{1,p}(\R^N)$ such that
\begin{equation} \label{minimum}
\lambda(\alpha)= \frac{Q_{\alpha W}[\phi_\ga]}{\|\phi_\ga\|_p^p}.
\end{equation}
Moreover, $Q_{\ga W-\gl(\ga)}$ is critical and $\phi_\ga$ is an Agmon ground state.
\end{lem}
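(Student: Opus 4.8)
The plan is to prove Lemma \ref{lem_mimimizer} in two stages: first establish existence of a minimizer $\phi_\ga$ for every $\ga\geq 0$, and then deduce that $Q_{\ga W-\gl(\ga)}$ is critical with $\phi_\ga$ as its Agmon ground state. For the existence, I would take a minimizing sequence $(u_n)\subset W^{1,p}(\R^N)$ for the quotient in \eqref{lambda-def}, normalized so that $\|u_n\|_p=1$; replacing $u_n$ by $|u_n|$ we may assume $u_n\geq 0$. The key coercivity input is Proposition \ref{prop-Lp}: since $p^2<N$ we have $\phi_0\in L^p(\R^N)$, and more importantly the sharp two-sided bound $\phi_0(x)\asymp\jap{x}^{(p-N)/(p-1)}$ together with the Fuchsian decay \eqref{eq_Fuchs} of $V$ means that $Q_0$ controls $\|\nabla u\|_p^p$ up to a lower-order term; concretely, since $W$ is compactly supported, $Q_{\ga W}[u]\geq Q_0[u]-\ga\|W\|_\infty\|u\|_{L^p(\supp W)}^p$, and one can use the critically-normalized Hardy-type inequality associated with $\phi_0$ (the simplified-energy equivalence \eqref{two-sided-pr}) plus the Sobolev inequality \eqref{sobolev} to bound $\|\nabla u_n\|_p$. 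Thus $(u_n)$ is bounded in $W^{1,p}(\R^N)$, so along a subsequence $u_n\rightharpoonup\phi_\ga$ weakly in $W^{1,p}$, strongly in $L^p_{\loc}$, and a.e. Weak lower semicontinuity of $\|\nabla\cdot\|_p^p$, together with strong local $L^p$ convergence (which handles both the $\int V|u|^p$ term, using $V\in L^1\cap L^\infty$ and a tightness/decay argument via $\phi_0\in L^p$, and the compactly supported $\int W|u|^p$ term), gives $Q_{\ga W}[\phi_\ga]\leq\liminf Q_{\ga W}[u_n]=\gl(\ga)$. The delicate point is that $\|\phi_\ga\|_p$ could a priori be $<1$; this is where I expect the main obstacle.

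To rule out loss of mass I would argue that the normalization $\phi_0(0)=1$ and condition \eqref{condition}, which force $\gl(\ga)<0$ for $\ga>0$, prevent the minimizing sequence from "escaping to infinity": if $\|\phi_\ga\|_p<1$, then by the strict negativity of $\gl(\ga)$ and the structure of $Q_{\ga W}$ one derives a contradiction with the definition of $\gl(\ga)$ as an infimum — essentially, the "lost" part of the sequence carries nonnegative energy (since it lives away from $\supp W$ where $Q_{\ga W}=Q_0\geq 0$ by criticality), so concentrating only on $\phi_\ga$ would give a strictly smaller quotient unless $\|\phi_\ga\|_p=1$. For $\ga=0$ the statement $\gl(0)=0$ with minimizer $\phi_0$ is exactly the content of criticality: $\phi_0$ is the Agmon ground state, $Q_0[\phi_0]=0$, and $\|\phi_0\|_p<\infty$ by Proposition \ref{prop-Lp}. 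Positivity of $\phi_\ga$ follows from the Harnack inequality and the strong maximum principle for the $p$-Laplacian applied to the nonnegative weak solution of the Euler--Lagrange equation $-\Gd_p\phi_\ga+(V-\ga W)|\phi_\ga|^{p-2}\phi_\ga=\gl(\ga)|\phi_\ga|^{p-2}\phi_\ga$, since $\phi_\ga\not\equiv 0$.

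For the second assertion, observe that the minimizer satisfies, in the weak sense,
\begin{equation}
-\Gd_p\phi_\ga+\big(V-\ga W-\gl(\ga)\big)|\phi_\ga|^{p-2}\phi_\ga=0 \quad\text{in }\R^N,
\end{equation}
so $\phi_\ga$ is a positive solution of $[-\Gd_p+(V-\ga W-\gl(\ga))](w)=0$ in $\R^N$. By the definition \eqref{lambda-def} of $\gl(\ga)$ as the bottom of the spectrum, the functional $Q_{\ga W-\gl(\ga)}[u]=Q_{\ga W}[u]-\gl(\ga)\|u\|_p^p$ is nonnegative on $W^{1,p}(\R^N)$, hence $-\Gd_p+(V-\ga W-\gl(\ga))$ is nonnegative. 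Since it admits the positive solution $\phi_\ga$ which, being a minimizer of the associated nonnegative functional, is a null-state (one can extract a null-sequence from truncations $\chi_R\phi_\ga$ with cut-offs $\chi_R$, using $\phi_\ga\in L^p$ and the fact that the energy of the tail tends to $0$), the characterization of criticality recalled after Definition \ref{min_gr} — a nonnegative operator is critical iff it admits a null-sequence — shows $Q_{\ga W-\gl(\ga)}$ is critical. Then the uniqueness part of criticality theory identifies $\phi_\ga$ (up to scaling) with the Agmon ground state. The main technical obstacle throughout is the no-loss-of-mass step in the existence proof; constructing the null-sequence for the criticality claim is the other nontrivial point, and both hinge on the $L^p$-integrability of $\phi_0$ furnished by Proposition \ref{prop-Lp} in the regime $p^2<N$.
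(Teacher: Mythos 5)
Your route—direct minimization first, criticality read off afterwards—is genuinely different from the paper's, which argues in the opposite order: since $W\in C_c(\R^N)$ and $\phi_0\in L^p$ (Proposition \ref{prop-Lp}), one has $\lambda_\infty(\alpha)=0>\lambda(\alpha)$, so $Q_{\alpha W-\lambda(\alpha)}$ has a spectral gap and is therefore critical with Agmon ground state $\phi_\alpha$; then $\phi_\alpha\in L^p$ follows by comparing $\phi_\alpha$, a solution of minimal growth at infinity, with the supersolution $\phi_0\asymp\jap{x}^{(p-N)/(p-1)}$ outside $\supp W$, and $\phi_\alpha\in W^{1,p}$ follows from a null-sequence $0\leq\varphi_{\alpha,n}\leq\phi_\alpha$ whose $W^{1,p}$-norms are bounded; \eqref{minimum} is then automatic. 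Your variational route can be salvaged for $\alpha>0$, but two steps need repair. The coercivity claim that ``$Q_0$ controls $\|\nabla u\|_p^p$ up to a lower-order term'' is false: criticality of $Q_0$ means precisely that $Q_0$ dominates no nonzero nonnegative functional (it admits a null-sequence). It is also unnecessary—with $\|u_n\|_p=1$ and $V\in L^\infty$, $W\in C_c$, boundedness of $\|\nabla u_n\|_p$ is immediate from $Q_{\alpha W}[u_n]\to\lambda(\alpha)$. Likewise, your mass-loss argument via ``the lost part carries nonnegative energy'' tacitly requires a Brezis--Lieb splitting of $\|\nabla u_n\|_p^p$, which for $p\neq 2$ demands a.e.\ convergence of gradients and is not free; the correct and much simpler device is the squeeze used in Lemma \ref{lem-minimizer}: once weak lower semicontinuity (Lemma \ref{lem-semicont}, applicable here since $V\in L^1\cap L^\infty\subset L^{N/p}\cap L^s_{\loc}$) gives $Q_{\alpha W}[\phi_\alpha]\leq\lambda(\alpha)<0$, the chain $\lambda(\alpha)\geq Q_{\alpha W}[\phi_\alpha]\geq\lambda(\alpha)\|\phi_\alpha\|_p^p\geq\lambda(\alpha)$ forces $\|\phi_\alpha\|_p=1$.

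The genuine gap is the case $\alpha=0$, which the lemma includes. There $\lambda(0)=0$, the squeeze (and your dichotomy) degenerate, and the weak limit of a minimizing sequence may well vanish. Your fallback—``$\phi_0$ is the Agmon ground state, $Q_0[\phi_0]=0$, and $\|\phi_0\|_p<\infty$ by Proposition \ref{prop-Lp}''—presupposes $\phi_0\in W^{1,p}(\R^N)$, i.e.\ $\nabla\phi_0\in L^p(\R^N)$, which Proposition \ref{prop-Lp} does not provide and which is exactly the nontrivial content of the statement at $\alpha=0$ (a ground state is a priori only in $W^{1,p}_{\loc}$, and $Q_0[\phi_0]$ is not even known to be finite without it). This must be proved, e.g.\ by the paper's null-sequence argument, or by invoking the gradient decay \eqref{phi0-grad} together with interior regularity. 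A related but milder issue recurs in your criticality step for $\alpha>0$, where you test the Euler--Lagrange equation against $\phi_\alpha$ and its truncations; there it is harmless only because by then you already know $\phi_\alpha\in W^{1,p}(\R^N)$.
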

\begin{proof}
By our assumptions and \cite[Prop.~4.5]{pt} we have $\lambda(\alpha)\leq 0$ for any $\ga\geq 0$ and $\lambda(\alpha) = 0$  if and only if $\ga=0$. Since $W\in C_c(\R^N)$,  in light of Proposition \ref{prop-Lp} we have 
\begin{equation} 
\lambda_\infty(\alpha):= \lim_{R\to\infty}\  \inf_{u\in C_0^\infty( B^c_R(0))} \frac{Q_{\alpha W}[u]}{\|u\|_p^p} =  0
\end{equation}
  for any $\ga\geq 0$. Hence, for $\ga>0$ the functional $Q_{\alpha W}$ has a {\it spectral gap} and therefore the operator 
$$-\Delta_p \varphi + V|\varphi|^{p-2}\varphi -\alpha W |\varphi|^{p-2}\varphi -\lambda(\alpha) |\varphi|^{p-2}\varphi$$
 is critical in $\R^N$ and admits an Agmon ground state $\phi_{\alpha}$. {The proof of this statement is similar to the proof of \cite[Lemma~2.3]{LP}, and therefore it is omitted}. Note that, in order to establish \eqref{minimum} it is enough to prove that $\phi_{\alpha} \in W^{1,p}(\R^N)$ for any $\ga \geq 0$. Recall that $\phi_0$, the Agmon ground state, of $Q_0$  satisfies \eqref{eq_est}, in particular, $\phi_0$ is $L^{p}$-integrable. Moreover, for any $\ga > 0$ the ground state $\phi_0$  is a positive supersolution of the equation 
\begin{equation}\label{eq_PDE_alpha}
-\Delta_p \varphi + V |\varphi|^{p-2}\varphi-\alpha W |\varphi|^{p-2}\varphi = \lambda(\alpha)|\varphi|^{p-2}\varphi
\end{equation}
near infinity, as $W$ is compactly supported and $\lambda(\alpha) <0$. 
Recall that the ground state $\phi_{\alpha}$ is a positive solution of minimal growth at infinity of \eqref{eq_PDE_alpha}. Therefore, there exists $C>0$ and $R$ sufficiently large and independent on $\ga$ such that $\phi_{\alpha} \leq C \phi_0 \asymp\jap x^{(p-N)/(p-1)}$ in $\R^N\setminus B_R(0)$. Hence, $\phi_{\alpha} \in L^p(\R^N)$.

Next we show that in fact  $\phi_{\alpha} \in W^{1,p}(\R^N)$ for $\ga \geq 0$.
Since $\phi_{\alpha}$ is a ground state, there exists a {\it null-sequence} $(\varphi_{\alpha,n})$ in $C_c^{\infty}(\R^N)$ such that $0 \leq \varphi_{\alpha,n} \leq \phi_{\alpha}$ \cite[Remark 5.4 $\&$ Lemma 5.5.]{DP_JAM} and $\varphi_{\alpha,n} \rightarrow \phi_{\alpha}$ in $L^p_{\loc}(\R^N)$.  Consequently, 
\begin{align} \label{Comp:1}
\|\varphi_{\alpha,n}\|_{W^{1,p}(\R^N)}^p &= Q_{\ga W-\lambda(\alpha)}(\varphi_{\alpha,n}) - \int_{\R^N} \big(V- \alpha W - \lambda(\alpha)\big)|\varphi_{\alpha,n}|^p \dx + \int_{\R^N} |\varphi_{{\alpha,n}}|^p \dx \nonumber \\
& \leq Q_{\ga W -\lambda(\alpha)}(\varphi_{{\alpha,n}}) \!+ \!\!\int_{\R^N}\!\! V^-|\phi_{\alpha}|^p \!\dx \!+\! \alpha \!\int_{\R^N} \!\!|W| |\phi_{\alpha}|^p \!\dx \!+\! \int_{\R^N} |\phi_{\alpha}|^p \dx\,. 
\end{align}
Since $(\varphi_{\alpha,n})$ is a null-sequence, it follows that $Q_{{\alpha W} -\lambda(\alpha)}(\varphi_{\alpha,n}) \rightarrow 0$ as $n \rightarrow \infty$. Also, since $V\in L^{N/p}(\R^N)$,  $\phi_{\alpha} \leq C \phi_0 \asymp\jap x^{(p-N)/(p-1)}$,   then by  H\"{o}lder inequality it follows that $\phi_{\alpha} \in L^p(\R^N,V^-)$. Thus, \eqref{Comp:1} implies that $(\varphi_{\alpha,n})$ is bounded in $W^{1,p}(\R^N)$. Due to the reflexivity of $W^{1,p}(\R^N)$, up to a subsequence, there exists $\psi_{\alpha} \in W^{1,p}(\R^N)$ such that $\varphi_{\alpha,n} \rightharpoonup \psi_{\alpha}$ in $W^{1,p}(\R^N)$. Consequently, by Rellich-Kondrachov compactness theorem, up to a subsequence, $\varphi_{\alpha,n} \rightarrow \psi_{\alpha}$ in $L^p_{\loc}(\R^N)$. Hence, $\psi_{\alpha}=c_\ga\phi_{\alpha}$ for some constant $c_\ga>0$. This implies $\phi_{\alpha} \in W^{1,p}(\R^N)$. 
\end{proof}

\begin{proof}[\bf Proof of Theorem \ref{thm-linear}]
By Lemma~\ref{minimum}, we have $\phi_0 \in W^{1,p}(\R^N)$.
Next, using $u=\phi_0$ as a test function in the Rayleigh quotient \eqref{lambda-def} with $\ga>0$, we get
\begin{equation} \label{ineq}
\lambda(\alpha) \leq -\frac{\alpha\int_{\R^N} W (\phi_0)^p\dx}{\|\phi_0\|_p^p}\, < 0.
\end{equation}
Hence, $\lambda(\alpha) <0$ for all $\ga>0$ (this in fact, follows also from \cite[Prop.~4.5]{pt}). Consequently,

\begin{equation} \label{upperb}
\limsup_{\alpha\to 0_+} \alpha^{-1}\lambda(\alpha) \leq -\frac{\int_{\R^N} W (\phi_0)^p\dx}{\|\phi_0\|_p^p}\, < 0.
\end{equation}
To prove the lower bound, recall that by Lemma~\ref{lem_mimimizer} for any $\alpha>0$ there exists  $0< \phi_\alpha\in W^{1,p}(\R^N)$ such that 
$$
\lambda(\alpha) = \frac{Q_{{{\alpha}W}}[\phi_\alpha]}{\|\phi_{\alpha}\|_p^p}\, .
$$ 
This implies
\begin{equation} \label{lowerb-1}
 \lambda(\alpha)  \geq -\alpha\ \frac{ \int_{\R^N} W (\phi_\alpha)^p\dx}{\|\phi_\alpha\|_p^p}\, .
\end{equation} 
We may assume that $\phi_\ga(0)=1$. Let $\ga \searrow 0$, then $\lambda(\alpha)\nearrow 0$. The Harnack convergence principle \cite[Proposition 2.11]{Yehuda_Georgios} and the uniqueness of a positive solution of the critical equation $-\Delta_p \varphi + V|\varphi|^{p-2}\varphi=0$ in $\R^N$  satisfying $\varphi(0)=1$, imply that $\phi_\alpha \to \phi_0$ in $L^\infty_{\loc}(\R^N)$, and therefore, 
$$
\lim_{\alpha\to 0+}  \int_{\R^N} W (\phi_\alpha)^p\dx = \int_{\R^N} W (\phi_0)^p\dx.
$$
Now let $O= \R^N\setminus {\rm supp} \,W$. Then there exists $C>0$ such that for any $0<\alpha\leq 1$   
\begin{equation} \label{u-bc}
C^{-1}\leq \phi_\alpha \big|_{\partial O}  \leq C.
\end{equation}
Moreover, $\phi_\alpha$ is a  positive solution of the equation $-\Delta_p \varphi + V|\varphi|^{p-2}\varphi = \lambda(\alpha) |\varphi|^{p-2}\varphi$ in $O$ of minimal growth at infinity. Since $\phi_0$ is a positive supersolution of the 
same equation, it follows that ${\phi_\alpha}\leq C\phi_0$ in $\R^N$, where $C>0$ is a constant independent of $\ga$, and hence, by the dominated convergence, $\phi_\alpha \to \phi_0$ in $L^p(K)$. This in combination with $\phi_\alpha \to \phi_0$ in $L^\infty_{\loc}(\R^N)$ and 
\eqref{lowerb-1} implies 
$$
\hspace{5cm} \liminf_{\alpha\to 0_+} \alpha^{-1}\lambda(\alpha) \geq -\frac{\int_{\R^N} W (\phi_0)^p\dx}{\|\phi_0\|_p^p}\, .  \hspace{5.5cm}\qedhere
$$
\end{proof}
\section{\bf The case $p<N\leq p^2$}
\label{sec_lsc}
Similarly as in the case $N>p^2$ we start by showing that the variational problem \eqref{lambda-def} admits a minimizer for $\ga>0$. To this end, we need the following lemma.

\begin{lem} \label{lem-semicont} 
Suppose that  $V,W\in L^{N/p}(\R^N)\cap L^s_\loc(\R^N)$ for some $s>N$. Then the functional $Q_{\alpha W}$  is weakly lower
semicontinuous in $W^{1,p}(\R^N)$.
\end{lem}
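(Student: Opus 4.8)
The plan is to prove weak lower semicontinuity of $Q_{\alpha W}$ by splitting the functional into its three natural pieces — the gradient term, the $V$-term, and the $W$-term — and treating each separately. Suppose $u_n \rightharpoonup u$ in $W^{1,p}(\R^N)$. The gradient term $u\mapsto \int_{\R^N}|\nabla u|^p\dx$ is convex and strongly continuous, hence weakly lower semicontinuous on $W^{1,p}(\R^N)$; this gives $\int_{\R^N}|\nabla u|^p\dx \leq \liminf_n \int_{\R^N}|\nabla u_n|^p\dx$. The heart of the matter is therefore to show that the two zero-order terms converge:
\begin{equation}\label{eq:conv-zero-order}
\int_{\R^N} V|u_n|^p\dx \to \int_{\R^N} V|u|^p\dx, \qquad \int_{\R^N} W|u_n|^p\dx \to \int_{\R^N} W|u|^p\dx.
\end{equation}
Since $W$ (and likewise $V$, modulo the tail) is essentially localized, this is a compactness statement.

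First I would handle the local compactness. By the Rellich–Kondrachov theorem, on any ball $B_R(0)$ we have $u_n \to u$ strongly in $L^q(B_R(0))$ for every $q<p^*$, and in particular (passing to a subsequence) $u_n\to u$ a.e. and $|u_n|^p \to |u|^p$ in $L^r(B_R(0))$ for a suitable $r$. To absorb the potential, write $V = V\mathbf 1_{B_R} + V\mathbf 1_{B_R^c}$; on $B_R$ one uses $V\in L^{N/p}(\R^N)$ together with $|u_n|^p \rightharpoonup |u|^p$ in $L^{N/(N-p)}(B_R)$ — which is the Hölder-conjugate exponent, and this weak convergence follows from the boundedness of $\{|u_n|^p\}$ in $L^{N/(N-p)}$ (by Sobolev) plus the a.e. convergence — so $\int_{B_R} V|u_n|^p\dx \to \int_{B_R}V|u|^p\dx$. (The hypothesis $V\in L^s_{\loc}$ with $s>N$ is a convenient strengthening ensuring that $V|u_n|^p$ is actually equiintegrable on $B_R$ via Hölder with room to spare, so one can invoke Vitali's convergence theorem directly.) For $W\in C_c(\R^N)$ the argument is the same but even simpler, with $R$ chosen so that $\operatorname{supp}W\subset B_R(0)$, killing the tail entirely.

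The main obstacle is controlling the tail $\int_{B_R^c}V|u_n|^p\dx$ uniformly in $n$, which is exactly where $V\in L^{N/p}(\R^N)$ (global integrability, not merely local) enters. The estimate is: by Hölder and Sobolev,
\begin{equation}\label{eq:tail-bound}
\left|\int_{B_R^c} V|u_n|^p\dx\right| \leq \|V\|_{L^{N/p}(B_R^c)}\,\|u_n\|_{p^*}^p \leq C\,\|V\|_{L^{N/p}(B_R^c)}\,\sup_n\|\nabla u_n\|_p^p,
\end{equation}
and since $\|V\|_{L^{N/p}(B_R^c)}\to 0$ as $R\to\infty$ by absolute continuity of the integral, the tail is uniformly small. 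Combining this with the local convergence — i.e. a standard $\varepsilon/3$ argument: fix $R$ so the tails (for all $n$ and for $u$) are below $\varepsilon$, then let $n\to\infty$ on $B_R$ — yields \eqref{eq:conv-zero-order}. Putting the three pieces together,
\begin{equation}\label{eq:wlsc-conclusion}
Q_{\alpha W}[u] \leq \liminf_{n\to\infty} Q_{\alpha W}[u_n],
\end{equation}
which is the claim. I expect no genuine difficulty beyond bookkeeping; the only point requiring care is making sure the exponents in the Hölder pairings match ($N/p$ against $N/(N-p)$, the latter being $p^*/p$), and that the chosen $s>N$ indeed gives equiintegrability on bounded sets so that a.e. convergence upgrades to $L^1_{\loc}$ convergence of $V|u_n|^p$.
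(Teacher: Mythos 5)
Your proof is correct and follows essentially the same route as the paper: peel off the weakly lower semicontinuous gradient term, then show that the zero-order terms actually converge by combining Rellich--Kondrachov compactness on $B_R$ with the tail estimate $\|V\|_{L^{N/p}(B_R^c)}\,\sup_n\|u_n\|_{p^*}^p\to 0$ as $R\to\infty$. The only difference is in the implementation of the local step: the paper factors $|u_j|^p-|u|^p=f_j\,(|u_j|-|u|)$ and applies a three-exponent H\"older inequality, which is where the hypothesis $V\in L^{s}_{\loc}(\R^N)$, $s>N$, enters, whereas you pair the weak $L^{N/(N-p)}(B_R)$ limit of $|u_n|^p$ against $V\in L^{N/p}(B_R)$ (or invoke Vitali) -- an equally valid variant.
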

\begin{proof} It will be convenient to denote 
\begin{equation} \label{V-tilde} 
V_\ga  = V -\alpha W.
\end{equation}
Assume that $(u_j)$ converges weakly in $W^{1,p}(\R^N)$ to some $u$. Since
$\|\nabla u\|_p^p$ is weakly lower semi-continuous, it suffices to show that 
\begin{equation} \label{enough} 
\lim_{j\to \infty} \int_{\R^N} V_\ga  (|u_j|^p-|u|^p)\dx  =0\, .
\end{equation}
Pick $q$ such that $p<q<p^*$ and denote by  $q'$ the H\"older conjugate of $q$. Let
$$
f_j := \frac{|u_j|^p - |u|^p}{|u_j|-|u| }\, . 
$$
The sequence $(u_j)$ is bounded in $W^{1,p}(\R^N)$. Hence from the Sobolev inequality \eqref{sobolev} it follows that 
\begin{equation} \label{uj-Lr}
\sup_j \|u_j\|_r < \infty \qquad \forall\ r\in [\, p, \, p^*]\, .
\end{equation}
Note also that  
\begin{equation} \label{fj-upperb}
|f_j|\leq p\max\{|u_j|^{p-1},|u|^{p-1}\}\, .
\end{equation} 
Let  
$t:= {p(q-1)}/{q(p-1)}$.
Since the mapping $x\mapsto \frac{x}{x-1}$ is strictly decreasing on $(1,\infty)$, it follows that $t>1$, and in view of \eqref{uj-Lr}, \eqref{fj-upperb},
\begin{equation} \label{fj-sup}
\sup_j\, \|f_j\|_ {L^{q't} (\R^N)} \, < \, \infty\, . 
\end{equation}
Now, for any $R>0$ we have, by H\"older {inequality,}
\begin{align} \label{eq:aux1}
\Big | \int_{\R^N} V_\ga  (|u_j|^p-|u|^p)\dx \Big |  & \leq
\|u_j - u\|_{L^q(B_R)} \|V_\ga   f_j\|_{L^{q'}(B_R)}  
+ 2\|V_\ga \|_{L^{N/p}({B_R^c})}\,  \sup_j \|u_j\|_{p^*}\\
&\leq  \|u_j - u\|_{L^q(B_R)} \, \|V_\ga \|_{L^{q' t'} (B_R)}\, \|f_j\|_ {L^{q't} (\R^N)}+ 2\|V_\ga \|_{L^{N/p}({B_R^c})}\,  \sup_j \|u_j\|_{p^*}  \nonumber \, ,
\end{align}
where $B_R$ denotes the ball of radius $R$ centered in $0$, and where $t'$ is the H\"older conjugate of $t$. 
By the Rellich-Kondrashov theorem, see e.g.~\cite[Thm.~8.9]{LL}, up to a subsequence, $u_j$
converges to $u$ in $L^q_\loc$ for any $p\leq q < p*$. Since 
$$
q' t' = \frac{qp}{q-p}\, \qquad \text{and} \qquad \frac{p^* p}{p^*-p} =N\, ,
$$
by taking $q$ sufficiently close to $p^*$ we can make sure that $q' t' = s >N$. 
Then by sending first $j\to \infty$ and then $R\to
\infty$ in \eqref{eq:aux1} we obtain \eqref{enough} and hence the claim.
\end{proof}

\begin{lem} \label{lem-minimizer} 
Let $p<N \leq p^2$. Assume that $V$ and $W$ satisfy {the hypotheses of Lemma \ref{lem-semicont}.} In addition, suppose that $V\in L^q(\R^N)$ for some $N/p <q < p^*$.
Let $\ga>0$ such that  $\lambda(\alpha)<0$. Then there is a positive function $\phi_\ga\in W^{1,p}(\R^N)$ such that
\begin{equation} \label{minimum1}
\lambda(\alpha)= \frac{Q_{\alpha W}[ \phi_\ga]}{\|\phi_\ga\|_p^p}.
\end{equation}
Moreover, $Q_{\ga W-\gl(\ga)}$ is critical and $\phi_\ga$ is an Agmon ground state. 
\end{lem}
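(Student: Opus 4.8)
The plan is to establish existence of a minimizer for \eqref{lambda-def} when $\lambda(\alpha)<0$ via the direct method of the calculus of variations, and then invoke criticality theory to upgrade the minimizer to an Agmon ground state. First I would normalize: take a minimizing sequence $(u_j)\subset W^{1,p}(\R^N)$ with $\|u_j\|_p=1$ and $Q_{\alpha W}[u_j]\to\lambda(\alpha)$. Replacing $u_j$ by $|u_j|$ does not increase $Q_{\alpha W}$ (since $|\nabla|u_j||=|\nabla u_j|$ a.e.), so we may assume $u_j\ge 0$. The key point is to show $(u_j)$ is bounded in $W^{1,p}(\R^N)$: since $Q_{\alpha W}[u_j]\to\lambda(\alpha)<0$, in particular $Q_{\alpha W}[u_j]$ is bounded, and writing $Q_{\alpha W}[u_j]=\|\nabla u_j\|_p^p+\int_{\R^N}V_\alpha|u_j|^p\dx$, I would use the $L^{N/p}$ bound on $V_\alpha=V-\alpha W$ together with the Sobolev inequality \eqref{sobolev} and Hölder to absorb $\int V_\alpha|u_j|^p$: for any $\eps>0$ split $V_\alpha=V_\alpha\mathbf 1_{|V_\alpha|>M}+V_\alpha\mathbf 1_{|V_\alpha|\le M}$, the first piece has small $L^{N/p}$-norm and is controlled by $\eps\|\nabla u_j\|_p^p$ via Sobolev, the second by $M\|u_j\|_p^p=M$. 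This yields $(1-\eps)\|\nabla u_j\|_p^p\le Q_{\alpha W}[u_j]+C_M\le C$, hence boundedness in $W^{1,p}$.

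Next, by reflexivity of $W^{1,p}(\R^N)$, pass to a subsequence $u_j\rightharpoonup\phi_\alpha$ weakly in $W^{1,p}(\R^N)$, with $\phi_\alpha\ge 0$. By Lemma~\ref{lem-semicont}, $Q_{\alpha W}$ is weakly lower semicontinuous, so $Q_{\alpha W}[\phi_\alpha]\le\liminf_j Q_{\alpha W}[u_j]=\lambda(\alpha)$. It remains to show $\|\phi_\alpha\|_p=1$ (so that $\phi_\alpha\ne 0$ and is admissible). The constraint $\|u_j\|_p=1$ is the delicate point because $L^p$-norm is not preserved under weak $W^{1,p}$-convergence when $p<p^*$ — mass can escape to infinity. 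To rule this out I would argue by contradiction: suppose $\|\phi_\alpha\|_p=:\theta<1$. Using the same Rellich–Kondrashov plus Hölder argument as in the proof of Lemma~\ref{lem-semicont}, $\int_{\R^N}V_\alpha|u_j|^p\dx\to\int_{\R^N}V_\alpha|\phi_\alpha|^p\dx$ (this convergence used only the $L^{N/p}+L^s_{\loc}$ hypotheses, not the $L^p$-normalization). Hence $\liminf_j\|\nabla u_j\|_p^p\ge\lambda(\alpha)-\int V_\alpha|\phi_\alpha|^p\dx$, and combined with weak lower semicontinuity of $\|\nabla\cdot\|_p^p$ we get $\lambda(\alpha)\le Q_{\alpha W}[\phi_\alpha]\le\theta^p\cdot\frac{Q_{\alpha W}[\phi_\alpha]}{\|\phi_\alpha\|_p^p}$; but since $\lambda(\alpha)<0$ and $\frac{Q_{\alpha W}[\phi_\alpha]}{\|\phi_\alpha\|_p^p}\ge\lambda(\alpha)$, dividing gives $1\le\theta^p<1$ unless $\phi_\alpha=0$. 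The case $\phi_\alpha\equiv 0$ is excluded because then $\int V_\alpha|u_j|^p\dx\to 0$ would force $\lambda(\alpha)=\liminf_j\|\nabla u_j\|_p^p\ge 0$, contradicting $\lambda(\alpha)<0$. Therefore $\theta=1$, $\|\phi_\alpha\|_p=1$, and $\phi_\alpha$ realizes the infimum \eqref{minimum1}. By the strong maximum principle for the $p$-Laplacian, $\phi_\alpha>0$ in $\R^N$ (after noting $\phi_\alpha$ is a nonnegative weak solution of the Euler–Lagrange equation $-\Delta_p\varphi+V|\varphi|^{p-2}\varphi-\alpha W|\varphi|^{p-2}\varphi=\lambda(\alpha)|\varphi|^{p-2}\varphi$ and applying Harnack, cf.~\cite{Yehuda_Georgios}).

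For the last assertion — that $Q_{\alpha W-\lambda(\alpha)}$ is critical and $\phi_\alpha$ is its Agmon ground state — I would invoke the criticality theory recalled in the Preliminaries. Since $\phi_\alpha$ is a minimizer of the shifted nonnegative functional $Q_{\alpha W-\lambda(\alpha)}[u]=Q_{\alpha W}[u]-\lambda(\alpha)\|u\|_p^p\ge 0$ with $Q_{\alpha W-\lambda(\alpha)}[\phi_\alpha]=0$, it is a positive solution of $[-\Delta_p+(V-\alpha W-\lambda(\alpha))]w=0$ in $\R^N$, and a minimizing (null-)sequence for the shifted functional can be built from $\phi_\alpha$ via the standard cutoff construction (e.g.~\cite[Remark~5.4 \& Lemma~5.5]{DP_JAM}), giving a null-sequence that converges locally to $\phi_\alpha$; by the characterization of criticality via null-sequences quoted in the Preliminaries (from \cite{Yehuda_Georgios}), $Q_{\alpha W-\lambda(\alpha)}$ is critical and $\phi_\alpha$ is the unique (up to scalar) positive solution, i.e.~the Agmon ground state. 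The main obstacle is the non-compactness in Step~2: controlling the possible loss of $L^p$-mass at infinity along the minimizing sequence. The argument above circumvents a concentration-compactness dichotomy by exploiting the strict sign $\lambda(\alpha)<0$, which is exactly the hypothesis; without it, a minimizer need not exist (indeed for $\alpha=0$ the infimum is $0$ and typically not attained in $W^{1,p}$ since $\phi_0\notin W^{1,p}$ when $N\le p^2$).
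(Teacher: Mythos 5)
Your proposal is correct and follows the same overall strategy as the paper: direct method with a normalized minimizing sequence, weak lower semicontinuity from Lemma \ref{lem-semicont}, and the observation that $\lambda(\alpha)<0$ together with $\|\phi_\alpha\|_p\le 1$ forces $\lambda(\alpha)\|\phi_\alpha\|_p^p\ge\lambda(\alpha)$, so the chain of inequalities collapses and no mass escapes; positivity via Harnack and criticality via the null-sequence characterization are handled the same way. The one genuinely different sub-step is the boundedness of the minimizing sequence: you split $V_\alpha$ into a piece with small $L^{N/p}$-norm (absorbed into $\|\nabla u_j\|_p^p$ via Sobolev) and a bounded piece (controlled by the $L^p$-normalization), whereas the paper interpolates $\|u_j\|_r$ between $L^p$ and $L^{p^*}$ and uses H\"older against the extra hypothesis $V\in L^q(\R^N)$, $N/p<q<p^*$, to get $\|\nabla u_j\|_p^p\le C\|\nabla u_j\|_p^{N/q}$ with $N/q<p$. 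Your absorption argument is more standard and in fact renders that additional $L^q$ assumption unnecessary, which is a small gain in generality. Two cosmetic points: in your contradiction step the displayed chain $\lambda(\alpha)\le Q_{\alpha W}[\phi_\alpha]\le\theta^p\,Q_{\alpha W}[\phi_\alpha]/\|\phi_\alpha\|_p^p$ has the first inequality reversed (weak lower semicontinuity gives $Q_{\alpha W}[\phi_\alpha]\le\lambda(\alpha)$) and the second is a tautology; the correct chain is $\lambda(\alpha)\theta^p\le Q_{\alpha W}[\phi_\alpha]\le\lambda(\alpha)$, from which $\theta^p\ge1$ follows directly upon dividing by the negative number $\lambda(\alpha)$ --- no contradiction argument, and no convergence of the potential term, is needed for this step (the case $\phi_\alpha\equiv0$ is already excluded since it would give $0=Q_{\alpha W}[\phi_\alpha]\le\lambda(\alpha)<0$).
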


\begin{proof}
Let $(u_j)$ be a minimizing sequence for $Q_{\alpha W}$, normalized such that $\|u_j\|_p=1$ for any $j\in\N$. On the other hand,
the Sobolev inequality \eqref{sobolev} implies that $ u_j\in L^{p^*}(\R^N)$ for all  $j\in\N$. Let
\begin{equation} \label{theta}
\theta= \frac{N-p}{p(q-1)}  \, \in (0,1) . 
\end{equation} 
Then
\begin{equation} \label{r-def}
r :=  \frac{pq}{q-1}= \theta p^* +(1-\theta) p\, .
\end{equation}
From the H\"older inequality and from {the Sobolev inequality} \eqref{sobolev}, we thus get
\begin{equation} \label{interpolation}
\|u_j\|_r^r \, \leq \, \|u_j\|_{p^*}^{\theta p*}\, \leq C\, \|\nabla u_j\|_p^{\theta p*}\  \qquad \forall\, j\in\N, 
\end{equation}
with $C$ independent of $j$. The hypothesis 
 $\lambda(\alpha)<0$ allows to assume, without loss of generality, that $Q_{\alpha W}[ u_j]<0$ for any $j\in\N$. H\"older inequality combined with \eqref{interpolation}
 now gives 
 \begin{align*}
 \|\nabla u_j\|_p^p\, < \, \int_{\R^N} |V_\ga | \, |u_j|^p\dx \leq \| V_\ga \|_q\, \|u_j\|_r^{\frac{r(q-1)}{q}}\, \leq\, C \| V_\ga \|_q\, \|\nabla u_j\|_p^{{\tilde\theta} p^*}\, ,
 \end{align*}
 where ${\tilde\theta}= \frac{\theta(q-1)}{q}$. Since
 $$
 {\tilde\theta} p^* =  \frac{\theta(q-1)}{q} \frac{N p}{N-p}= \frac Nq <p,
 $$
 in view of \eqref{theta} and the assumption $q > \frac Np$, it follows that the sequence $(u_j)$ is bounded in $W^{1,p}(\R^N)$. Therefore, there exists a subsequence, which we 
 continue to denote by $(u_j),$ converging weakly in  $W^{1,p}(\R^N)$ to some $u_\ga$. The weak convergence implies
$$
\|u_\ga\|_p\leq \liminf_{j\to\infty} \|u_j\|_p = 1.
$$
Since $Q_{\alpha W}$ is weakly lower semicontinuous by Lemma
\ref{lem-semicont}, we deduce that 
$$
0>\lambda( \alpha) = \lim_{j\to\infty}\, Q_{\alpha W}[ u_j] \geq
Q_{\alpha W}[ u_\ga] \geq \lambda( \alpha)\, \|u_\ga\|_p^p \geq
\lambda( \alpha).
$$
Hence, $Q_{\alpha W}[ u_\ga] = \lambda(\alpha), \ \|u_\ga\|_p=1,$ and \eqref{minimum1} follows. Finally, since $|\,\nabla |u_\ga|\,| = |\nabla u_\ga|$ almost everywhere, we may choose $u_\ga\geq 0$. The Harnack 
inequality then implies $u_\ga>0$. The criticality of $Q_{\ga W-\gl(\ga)}$ follows immediately since  $\{u_\ga\}$ is a null-sequence for $Q_{\ga W-\gl(\ga)}$ and therefore, $u_\ga=\phi_\ga$ is the corresponding Agmon ground state.
\end{proof}

\begin{rem}
{\em Note that the hypothesis $N \leq p^2$ implies  $N/p  < p^*$, which makes the choice of $q$ feasible in the above lemma}. 
\end{rem}
\vspace{.1cm}

 \subsection{\bf Lower bounds}   \label{sec_lb}
 It remains to study the asymptotic of $\gl(\ga)$ as $\alpha \rightarrow 0$ when $p<N \leq p^2$. 
The following proposition shows that if  $V,W$ have compact supports, then the speed at which $\gl(\ga)$ tends to $0$ is faster than linear.

\begin{prop} \label{Prop:conv_la_al}
	Let $1<p<N \leq p^2$ and let $V,W \in C_c(\R^N)$ such that $V$ is critical in $\R^N$. Then $\frac{\lambda(\alpha)}{\alpha} \rightarrow 0$ as $\alpha \rightarrow 0_+$. 
\end{prop}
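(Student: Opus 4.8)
The plan is to show that any candidate rate faster than $\alpha$ is forced by testing the Rayleigh quotient against the ground state $\phi_0$ of the unperturbed critical operator, and then that $\lambda(\alpha)/\alpha$ cannot stay bounded away from $0$ because that would contradict criticality of $Q_0$. First I would record the easy direction: since $V$ and $W$ lie in $C_c(\R^N)$, the $p$-Laplacian being subcritical for $p<N$ gives that $\phi_0$ is a positive solution of minimal growth with $\phi_0(x)\asymp\langle x\rangle^{(p-N)/(p-1)}$; when $N\le p^2$ this is precisely the borderline where $\phi_0\notin L^p(\R^N)$ (for $N<p^2$) or barely fails (for $N=p^2$), so $\phi_0$ is not an admissible test function in $W^{1,p}(\R^N)$ directly. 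Hence I would instead use a truncation $\phi_0\eta_R$, where $\eta_R$ is a standard cutoff equal to $1$ on $B_R$ and supported in $B_{2R}$; then $Q_{\alpha W}[\phi_0\eta_R]=Q_0[\phi_0\eta_R]-\alpha\int W\phi_0^p\,dx$ for $R$ large enough that $\supp W\cup\supp V\subset B_R$, and the simplified-energy estimate \eqref{two-sided-pr} (with $v=\eta_R$) lets me bound $Q_0[\phi_0\eta_R]$ by a quantity $\varepsilon(R)\to 0$ as $R\to\infty$, because $Q_0[\phi_0]=0$ and the truncation error is controlled by $\int_{B_{2R}\setminus B_R}\phi_0^p|\nabla\eta_R|^p$ plus lower-order terms, which decays by the Fuchsian asymptotics of $\phi_0$ together with $N\le p^2$.

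The core of the argument is then a two-parameter optimization. Plugging $u=\phi_0\eta_R$ into \eqref{lambda-def} gives
\begin{equation*}
\lambda(\alpha)\ \le\ \frac{\varepsilon(R)-\alpha\int_{\R^N}W\phi_0^p\,dx}{\|\phi_0\eta_R\|_p^p}\,,
\end{equation*}
so $\lambda(\alpha)<0$ once $\alpha$ is large compared with $\varepsilon(R)$. Dividing by $\alpha$ and letting $\alpha\to0_+$ along a sequence, for each fixed $R$ I would get $\limsup_{\alpha\to0_+}\lambda(\alpha)/\alpha\le \varepsilon(R)/(\alpha\|\phi_0\eta_R\|_p^p)$-type bounds; the subtlety is that $\|\phi_0\eta_R\|_p^p\to\infty$ as $R\to\infty$ precisely in the regime $N\le p^2$, so I must let $R=R(\alpha)\to\infty$ at a rate slow enough that $\varepsilon(R(\alpha))/\alpha\to 0$. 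Since $\varepsilon(R)$ decays like a power of $R$ (or a logarithm when $N=p^2$) while $\|\phi_0\eta_R\|_p^p$ grows like a power of $R$ (resp.\ $\log R$), one can choose, say, $R(\alpha)=\alpha^{-\kappa}$ for suitable small $\kappa>0$ so that simultaneously $\varepsilon(R(\alpha))=o(\alpha)$; this yields $\limsup_{\alpha\to0_+}\lambda(\alpha)/\alpha\le 0$. Combined with the elementary lower bound $\lambda(\alpha)\ge -\alpha\|W\|_\infty\,\|u\|_p^{-p}\int|u|^p$ applied carefully — or more simply, with $\lambda(\alpha)\ge\alpha\inf(-W)\cdot(\text{something bounded})$ which still only gives $\liminf\ge$ a finite negative number, not $0$ — I would need the genuine lower bound.

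For the lower bound $\liminf_{\alpha\to0_+}\lambda(\alpha)/\alpha\ge0$, I would argue by contradiction: suppose $\lambda(\alpha_n)\le -c\,\alpha_n$ for some $c>0$ and $\alpha_n\to0$. By Lemma \ref{lem-minimizer} there is a minimizer $\phi_{\alpha_n}>0$, normalized by $\phi_{\alpha_n}(0)=1$; by the Harnack convergence principle \cite[Prop.~2.11]{Yehuda_Georgios} and uniqueness of the positive solution of the critical equation $-\Delta_p\varphi+V|\varphi|^{p-2}\varphi=0$ with $\varphi(0)=1$, one extracts $\phi_{\alpha_n}\to\phi_0$ in $L^\infty_\loc(\R^N)$ — the limit equation is obtained because $\alpha_n W\to0$ and $\lambda(\alpha_n)\to 0$ locally uniformly, and $\phi_{\alpha_n}$ is a positive solution of minimal growth, hence the limit is again of minimal growth, hence equals $\phi_0$. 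Then using $\phi_{\alpha_n}$ as a competitor and the comparison principle on $O=\R^N\setminus\supp W$ (exactly as in the proof of Theorem \ref{thm-linear}) forces $\phi_{\alpha_n}\le C\phi_0$ globally with $C$ independent of $n$, so $\int W\phi_{\alpha_n}^p\to\int W\phi_0^p$ and $\|\phi_{\alpha_n}\|_p^p\ge\|\phi_0\eta_R\|_p^p\to\infty$ if we let $R\to\infty$; therefore
\begin{equation*}
-c\ \ge\ \frac{\lambda(\alpha_n)}{\alpha_n}\ =\ -\frac{\int_{\R^N}W\phi_{\alpha_n}^p\,dx}{\|\phi_{\alpha_n}\|_p^p}+\frac{Q_0[\phi_{\alpha_n}]}{\alpha_n\|\phi_{\alpha_n}\|_p^p}\ \ge\ -\frac{\int_{\R^N}W\phi_{\alpha_n}^p\,dx}{\|\phi_{\alpha_n}\|_p^p}\ \to\ 0\,,
\end{equation*}
a contradiction, where the middle term is nonnegative by criticality of $Q_0$ and $\|\phi_{\alpha_n}\|_p\to\infty$ handles the numerator.

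The main obstacle I anticipate is making the blow-up $\|\phi_{\alpha}\|_p\to\infty$ quantitative and uniform enough: one needs that the minimizers $\phi_\alpha$ genuinely spread out (their $L^p$ mass escapes to infinity) as $\alpha\to0$, which is where the failure $\phi_0\notin L^p$ in the range $N\le p^2$ is used essentially, and this requires the comparison-principle/minimal-growth machinery to pin $\phi_\alpha$ between two multiples of $\phi_0$ on the complement of $\supp W$ with constants independent of $\alpha$. The cutoff-rate balancing $R(\alpha)=\alpha^{-\kappa}$ in the upper bound is routine once the decay rate of $\varepsilon(R)$ is identified from Proposition \ref{prop-Lp}, with the $N=p^2$ case producing the logarithm; I would treat the two cases $p<N<p^2$ and $N=p^2$ in parallel, only the exponents changing.
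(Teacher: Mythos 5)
Your lower-bound argument is essentially the paper's proof: the minimizer $\phi_\alpha$ converges to $\phi_0$ in $L^\infty_{\loc}(\R^N)$, the numerator $\int_{\R^N} W\phi_\alpha^p\dx$ converges because $W$ is compactly supported, and $\|\phi_\alpha\|_p^p\to\infty$ by Fatou's lemma since $\phi_0\notin L^p(\R^N)$ when $p<N\le p^2$ (your inequality ``$\|\phi_{\alpha_n}\|_p^p\ge\|\phi_0\eta_R\|_p^p$'' should be phrased via Fatou or via $\liminf_n\int_{B_R}\phi_{\alpha_n}^p=\int_{B_R}\phi_0^p$, but the idea is the same), whence $\lambda(\alpha)/\alpha\ge-\int_{\R^N}W\phi_\alpha^p\dx\,/\,\|\phi_\alpha\|_p^p\to0$. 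The elaborate truncation and $R(\alpha)$-optimization you set up for the other direction is unnecessary: $\limsup_{\alpha\to0_+}\lambda(\alpha)/\alpha\le0$ is immediate because $\lambda(\alpha)<0$ for all $\alpha>0$, which already follows from \eqref{condition} and \cite[Proposition 4.5]{pt}.
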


\begin{proof}
	From Lemma \ref{lem-minimizer}, we know that $\lambda(\alpha)$ is achieved at $\phi_{\alpha} \in W^{1,p}(\R^N)$, i.e., 
	\begin{align} \label{Eq:lam_al}
	\lambda(\alpha)=\frac{Q_{\alpha W}(\phi_{\alpha})}{\|\phi_{\alpha}\|_p^p} \geq -\alpha \frac{\int_{\R^N} W|\phi_{\alpha}|^p \dx}{\|\phi_{\alpha}\|_{p}^p}\,,
	\end{align}
	where $\phi_{\alpha}>0$ and can be chosen satisfying $\phi_{\alpha}(0)=1$.
	Using the arguments as in the proof of Theorem \ref{thm-linear}, it follows that $\phi_{\alpha} \rightarrow \phi_0$ in $L^{\infty}_{\loc}(\R^N)$, where $\phi_0$ is an Agmon ground state of the critical operator $-\Delta_p+V$. Since $W$ has compact support, it follows that $\lim_{\alpha \rightarrow 0} \int_{\R^N} W|\phi_{\alpha}|^p \dx = \int_{\R^N} W|\phi_0|^p \dx$. Also, \eqref{eq_est} implies that $\phi_0\not \in L^p(\R^N)$ as $p<N \leq p^2$. Thus, it follows from Fatou's lemma that $\liminf_{n \rightarrow \infty} \|\phi_{\alpha}\|_{L^p(\R^N)}^p =\infty$. From \eqref{Eq:lam_al}, we get
	$$ 0 > \frac{\lambda(\alpha)}{\alpha} \geq - \frac{\int_{\R^N} W|\phi_{\alpha}|^p \dx}{\|\phi_{\alpha}\|_{p}^p} \,.$$
	Consequently, the proposition follows.
\end{proof}

\noindent In Proposition \ref{prop:LB1} and \ref{prop:LB2} below we will prove the 
 necessary lower bounds for two different ranges of $p$ which together cover the whole interval $(1, N)$.  The main ingredient of the proof is a pointwise lower bound 
on $\phi_\alpha$ established in Lemmas \ref{Lem:LEst1} and \ref{Lem:LEst2}. The case $p^2=N$ is treated separately in Proposition \ref{prop:LB3}. 

\begin{prop} \label{prop:LB1}
Let $2-\frac{1}{N}\leq p<N \leq p^2$ and $V,W \in C_c(\R^N)$ such that $V$ is critical in $\R^N$. Then there exists {a $W$-independent} constant $C=C(p,N,V)>0$  such that 
$$ \liminf_{\alpha \rightarrow 0_+} \alpha^{-\frac{p(p-1)}{N-p}}\lambda(\alpha)  \geq -C \left(\int_{\R^N} W|\phi_0|^p \dx\right)^{{\frac{p(p-1)}{N-p}}}  \,.$$
 \end{prop}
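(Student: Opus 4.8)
\textbf{Proof proposal for Proposition \ref{prop:LB1}.}

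The starting point is the variational inequality \eqref{Eq:lam_al}, namely
$$
\lambda(\alpha) \;\geq\; -\,\alpha\,\frac{\int_{\R^N} W\,\phi_\alpha^p \dx}{\|\phi_\alpha\|_p^p}\,,
$$
where $\phi_\alpha$ is the ground state from Lemma \ref{lem-minimizer}, normalized by $\phi_\alpha(0)=1$. Since $W$ is compactly supported and $\phi_\alpha \to \phi_0$ in $L^\infty_{\loc}(\R^N)$ (Harnack convergence), the numerator converges to the positive constant $\int_{\R^N} W\,\phi_0^p\dx$. Therefore everything reduces to an \emph{upper bound on the rate of blow-up of $\|\phi_\alpha\|_p^p$ as $\alpha\to 0_+$}; more precisely, I expect to prove a bound of the form $\|\phi_\alpha\|_p^p \leq C\, |\lambda(\alpha)|^{-(N-p)/(p(p-1))+1}$, or equivalently a lower bound on $|\lambda(\alpha)|$ in terms of $\|\phi_\alpha\|_p^p$, which when combined with the displayed inequality closes the argument after solving the resulting algebraic relation for $|\lambda(\alpha)|$.

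The mechanism for controlling $\|\phi_\alpha\|_p$ is a pointwise lower bound on $\phi_\alpha$ far from the origin, obtained via the comparison principle — this is the content of the referenced Lemma \ref{Lem:LEst1}, which I would invoke here. The idea: outside a fixed large ball $B_R(0)$ containing $\supp V \cup \supp W$, the function $\phi_\alpha$ solves $-\Delta_p\varphi = \lambda(\alpha)\varphi^{p-1}$ (with $\lambda(\alpha)<0$), so it is a \emph{subsolution} of $-\Delta_p \varphi = 0$; comparing with an explicit radial supersolution built from the fundamental solution $|x|^{(p-N)/(p-1)}$ of the $p$-Laplacian, suitably cut off at a radius $\rho(\alpha)\to\infty$ chosen so that the zeroth-order term $\lambda(\alpha)\varphi^{p-1}$ is negligible up to scale $\rho(\alpha)$, one gets $\phi_\alpha(x) \gtrsim \jap{x}^{(p-N)/(p-1)}$ for $R \leq |x| \leq \rho(\alpha)$, with the natural choice being $\rho(\alpha) \asymp |\lambda(\alpha)|^{-1/p}$ (the length scale on which $-\Delta_p - \lambda(\alpha)$ ``sees'' its own eigenvalue). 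Integrating this lower bound over the annulus $R \leq |x| \leq \rho(\alpha)$ — here the restriction $p < N \leq p^2$ guarantees $\jap{x}^{p(p-N)/(p-1)}$ is exactly at or below the borderline of local integrability, so the integral over $B_{\rho(\alpha)}$ behaves like $\rho(\alpha)^{N - p(N-p)/(p-1)} = \rho(\alpha)^{(N(p-1) - p(N-p))/(p-1)} = \rho(\alpha)^{(Np - N - Np + p^2)/(p-1)}= \rho(\alpha)^{(p^2-N)/(p-1)}$ — yields
$$
\|\phi_\alpha\|_p^p \;\geq\; c\,\rho(\alpha)^{\frac{p^2-N}{p-1}} \;\asymp\; c\,|\lambda(\alpha)|^{-\frac{p^2-N}{p(p-1)}}\,.
$$
(For $N$ strictly between $p$ and $p^2$ this is a genuine power blow-up; the case $N=p^2$ is logarithmic and is handled separately in Proposition \ref{prop:LB3}.) Feeding this into $|\lambda(\alpha)| \leq \alpha\,\|\phi_\alpha\|_p^{-p}\,\int W\phi_\alpha^p$ gives $|\lambda(\alpha)| \lesssim \alpha\,|\lambda(\alpha)|^{(p^2-N)/(p(p-1))}\,\big(\int W\phi_0^p + o(1)\big)$, i.e. $|\lambda(\alpha)|^{1 - (p^2-N)/(p(p-1))} \lesssim \alpha\,(\int W\phi_0^p + o(1))$. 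Since $1 - (p^2-N)/(p(p-1)) = (p(p-1) - p^2 + N)/(p(p-1)) = (N-p)/(p(p-1))$, raising to the power $p(p-1)/(N-p)$ produces exactly
$$
|\lambda(\alpha)| \;\leq\; C\,\alpha^{\frac{p(p-1)}{N-p}} \Big(\int_{\R^N} W\phi_0^p\dx + o(1)\Big)^{\frac{p(p-1)}{N-p}},
$$
whence $\liminf_{\alpha\to 0_+}\alpha^{-p(p-1)/(N-p)}\lambda(\alpha) \geq -C\big(\int W\phi_0^p\big)^{p(p-1)/(N-p)}$, with $C$ depending only on $p,N,V$ (through $R$ and the comparison-principle constants) and not on $W$.

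The main obstacle, and the reason for the hypothesis $p \geq 2 - 1/N$ (equivalently $N(p-1)\geq N-p$, i.e. $N \geq N/(p-1)$... in any case a structural restriction on the range), is establishing the pointwise lower bound $\phi_\alpha \gtrsim \jap{x}^{(p-N)/(p-1)}$ on the annulus \emph{uniformly in $\alpha$} and up to the correct scale $\rho(\alpha)$. The delicate point is that the comparison function must simultaneously (i) be a supersolution of $-\Delta_p\varphi = \lambda(\alpha)\varphi^{p-1}$ — not merely of $-\Delta_p\varphi=0$ — on the annulus, which requires carefully estimating the error term produced by the cutoff and the sign of $\lambda(\alpha)$, and (ii) lie below $\phi_\alpha$ on the inner boundary $\partial B_R$, which is where the uniform lower bound $\phi_\alpha|_{\partial B_R}\geq C^{-1}$ from Harnack convergence enters. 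One typically needs a barrier of the form $c\big(|x|^{(p-N)/(p-1)} - \rho(\alpha)^{(p-N)/(p-1)}\big)$ or an iterated comparison (as the remark before the proposition hints with ``iterated application of the comparison principle''), successively improving the region on which the bound holds — this iteration, and tracking that the constant stays $W$-independent throughout, is the technical heart; the rest is the elementary algebra above.
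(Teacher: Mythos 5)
Your overall skeleton is the same as the paper's: start from $\lambda(\alpha)\geq -\alpha\int W\phi_\alpha^p/\|\phi_\alpha\|_p^p$, prove the blow-up estimate $\|\phi_\alpha\|_p^p\gtrsim |\lambda(\alpha)|^{-(p^2-N)/(p(p-1))}$ via a pointwise lower bound on $\phi_\alpha$ obtained by comparison, and close with the algebra $1+\nu_0-N/p=(N-p)/(p(p-1))$ (which you carry out correctly). The gap is in the one step that constitutes the actual content, namely the pointwise lower bound, and the mechanism you sketch for it would fail. Your proposed barrier $c\bigl(|x|^{-\nu_0}-\rho(\alpha)^{-\nu_0}\bigr)$, built from the $p$-harmonic fundamental solution, satisfies $-\Delta_p v-\lambda(\alpha)v^{p-1}=-\lambda(\alpha)v^{p-1}>0$ wherever $v>0$ (recall $\lambda(\alpha)<0$), so it is a \emph{supersolution} of the equation solved by $\phi_\alpha$ in $B_R^c$; the comparison principle then yields $\phi_\alpha\leq Cv$, i.e.\ an upper bound, which is useless here (your statement that the comparison function should be ``a supersolution of $-\Delta_p\varphi=\lambda(\alpha)\varphi^{p-1}$'' has the direction reversed). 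To get a lower bound one needs a \emph{subsolution} lying below $\phi_\alpha$ on the boundary, and no truncation of the pure power does this. The paper's barriers are instead exponentially damped powers $|x|^{-\nu}\exp(-\mu_\alpha|x|)$ with $\mu_\alpha^p\asymp|\lambda(\alpha)|$, chosen precisely so that the leading term of $-\Delta_p$ produces $(1-p)\mu_\alpha^p\approx\lambda(\alpha)$ and the subsolution inequality holds after checking the sign of the lower-order corrections $A_\nu$, $B_\nu$.

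Moreover, a single comparison does not suffice in the regime of Proposition \ref{prop:LB1}: the barrier with the desired exponent $\nu_0=(N-p)/(p-1)$ is a subsolution only on $B_{R_\alpha}^c$ with $R_\alpha\asymp|\lambda(\alpha)|^{-1/p}$, so the paper first proves (Lemma \ref{Lem:LEst1}) the weaker bound $\phi_\alpha\gtrsim|x|^{-\nu_1}\exp(-\mu_\alpha|x|)$, $\nu_1=(N-1)/(p-1)$, on all of $B_R^c$, and then uses it as boundary data on $\partial B_{R_\alpha}$ for a second comparison with $w_\alpha=|x|^{-\nu_0}\exp(-\mu_\alpha|x|)$. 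Matching the two barriers on $\partial B_{R_\alpha}$ uniformly in $\alpha$ is done via the radial estimate $w_\alpha\leq C\|w_\alpha\|_{L^s}|x|^{-(N-1)/(p-1)}$ of Berestycki--Lions, which requires $s=N(p-1)/(N-1)\geq 1$ --- this is exactly where the hypothesis $p\geq 2-\tfrac1N$ enters (equivalently $N(p-1)\geq N-1$; your restatement $N(p-1)\geq N-p$ is not equivalent). Your proposal neither supplies a valid subsolution nor locates the use of this hypothesis, so the ``technical heart'' you defer is genuinely missing rather than routine.
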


 \noindent To prove this proposition we need the following lemma, which provides a pointwise lower bound of the minimizer $\phi_{\alpha}$ near infinity. This will enable us to use comparison techniques.

\begin{lem} \label{Lem:LEst1}
Let $1< p<N \leq p^2$ and $V,W \in C_c(\R^N)$ such that $V$ is critical in $\R^N$ and the support of $V,W$ are contained inside $B_R$ for some $R>>1$.  For $\ga>0$ let $\phi_{\alpha} \in W^{1,p}(\R^N)$ be a minimizer of $\lambda(\alpha)$ with $\phi_{\alpha}>0$ and $\phi_{\alpha}(0)=1$.  Then there exists an $\alpha,W$-independent constant $C(V,N,p)>0$ such that  
\begin{align}
  \phi_{\alpha} \geq  C   v_{\alpha} \qquad \mbox{on} \ \ B_{R}^c \,,
\end{align}
for all $\alpha>0$ sufficiently small, where $v_{\alpha} \in W^{1,p}(\R^N)$ is a radial, and radially decreasing function such that $$v_{\alpha} = |x|^{-\nu_1}\exp\left(-\left(\frac{\lambda(\alpha)}{1-p}\right)^{1/p}|x|\right) \ \ \mbox{in} \ \ B_R^c \,, \ \  \ \ \nu_1 =\frac{N-1}{p-1} .$$
\end{lem}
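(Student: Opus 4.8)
The plan is to exhibit $v_\alpha$, restricted to $B_R^c$, as a positive subsolution of the equation satisfied by $\phi_\alpha$ there, and then to conclude by an exterior comparison argument. Since $\supp V\cup\supp W\subset B_R$ and, by Lemma~\ref{lem-minimizer}, $\phi_\alpha$ is an Agmon ground state lying in $W^{1,p}(\R^N)$, on $B_R^c$ the function $\phi_\alpha$ is a positive finite-energy solution of $-\Delta_p\varphi+|\lambda(\alpha)|\varphi^{p-1}=0$, where $\lambda(\alpha)<0$ for all small $\alpha>0$ by \cite[Prop.~4.5]{pt} and \eqref{condition}. Set $\mu=\mu(\alpha):=\big(|\lambda(\alpha)|/(p-1)\big)^{1/p}=\big(\lambda(\alpha)/(1-p)\big)^{1/p}>0$ and let $v_\alpha$ be the radial, radially decreasing $W^{1,p}(\R^N)$ function equal to $|x|^{-\nu_1}e^{-\mu|x|}$ on $B_R^c$ (extended, say, by the constant $R^{-\nu_1}e^{-\mu R}$ inside $B_R$); its membership in $W^{1,p}(\R^N)$ follows from the exponential decay of $v_\alpha$ and $\nabla v_\alpha$. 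First I would record, with $r=|x|$ and the radial $p$-Laplacian $\Delta_p v=r^{1-N}(r^{N-1}|v'|^{p-2}v')'$, the identities $v_\alpha'=-v_\alpha(\mu+\nu_1 r^{-1})$ and, using $\nu_1(p-1)=N-1$ together with $|\lambda(\alpha)|=(p-1)\mu^p$,
\[
\Delta_p v_\alpha-|\lambda(\alpha)|v_\alpha^{p-1}=(p-1)\,r^{1-N}e^{-\mu(p-1)r}\Big[(\mu+\nu_1 r^{-1})^{p-2}\big(\mu^2+\mu\nu_1 r^{-1}+\nu_1 r^{-2}\big)-\mu^p\Big].
\]
The bracket is nonnegative: dropping the term $\nu_1 r^{-2}\ge 0$ and factoring $\mu^2+\mu\nu_1 r^{-1}=\mu(\mu+\nu_1 r^{-1})$, it is at least $\mu(\mu+\nu_1 r^{-1})^{p-1}-\mu^p\ge 0$ because $\mu+\nu_1 r^{-1}\ge\mu$ and $p-1>0$. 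Hence $-\Delta_p v_\alpha+|\lambda(\alpha)|v_\alpha^{p-1}\le 0$ on $B_R^c$, and by $(p-1)$-homogeneity the same holds for $Cv_\alpha$ for every $C>0$.

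Next I would fix $C$. As recalled in the proof of Proposition~\ref{Prop:conv_la_al}, the Harnack convergence principle together with the uniqueness of the normalised ground state give $\phi_\alpha\to\phi_0$ in $L^\infty_{\loc}(\R^N)$; since $\phi_0>0$ is continuous, there is $\delta_0=\delta_0(V,N,p,R)>0$ with $\phi_\alpha\ge\delta_0$ on $\partial B_R$ for all $\alpha>0$ small. On $\partial B_R$ one has $v_\alpha=R^{-\nu_1}e^{-\mu R}\le R^{-\nu_1}$, so the choice $C:=\delta_0 R^{\nu_1}$, which is independent of $\alpha$ and of $W$, gives $Cv_\alpha\le\delta_0\le\phi_\alpha$ on $\partial B_R$.

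It then remains to propagate $Cv_\alpha\le\phi_\alpha$ from $\partial B_R$ into all of $B_R^c$. Both $\phi_\alpha$ (Lemma~\ref{lem-minimizer}) and $v_\alpha$ lie in $W^{1,p}(B_R^c)$, so $\eta:=(Cv_\alpha-\phi_\alpha)^+\in W^{1,p}(B_R^c)$ has vanishing trace on $\partial B_R$ by the previous step and is therefore admissible as a test function in the weak subsolution inequality for $Cv_\alpha$ and in the equation for $\phi_\alpha$. Subtracting these, on $\{Cv_\alpha>\phi_\alpha\}$ the $p$-Laplacian contribution is $\ge 0$ by strict monotonicity of $\xi\mapsto|\xi|^{p-2}\xi$ and the zeroth-order contribution is $\ge 0$ because $t\mapsto t^{p-1}$ is increasing on $(0,\infty)$; it follows that $\nabla\eta\equiv 0$ a.e.\ on $B_R^c$, hence $\eta\equiv 0$, i.e.\ $Cv_\alpha\le\phi_\alpha$ on $B_R^c$, as claimed. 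I expect this last step to be the main point: since $B_R^c$ is unbounded, the usual weak comparison principle on bounded domains does not apply verbatim and one must exclude a contribution from infinity — which is exactly why it is convenient to argue in the finite-energy space $W^{1,p}(B_R^c)$, where $\eta$ is a legitimate test function; alternatively one could cite an exterior-domain comparison principle from the quasilinear criticality theory. A secondary, purely computational, point is that the subsolution estimate in the first step must hold uniformly in $p\in(1,N)$, which is what the one-line bound on the bracket above achieves.
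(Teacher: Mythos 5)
Your proposal is correct and follows essentially the same strategy as the paper: the same radial subsolution $v_\alpha$ with exponent $\nu_1=(N-1)/(p-1)$ (your one-line bound on the bracket is an equivalent reorganization of the paper's computation via the constants $A_{\nu_1}=B_{\nu_1}=1-N$), the same boundary comparison on $\partial B_R$ via $\phi_\alpha\to\phi_0$ in $L^\infty_{\loc}$ and the positivity of $\phi_0$, and the same final step of a comparison principle on the exterior domain. The only difference is that you prove the exterior comparison principle directly by testing with $(Cv_\alpha-\phi_\alpha)^+\in W^{1,p}_0(B_R^c)$ — which is valid since both functions have finite energy and a constant is not in $L^p$ of an unbounded domain — whereas the paper cites \cite[Theorem B.1 \& Lemma B.2]{AMM} for this step.
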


\begin{proof}
{Recall that the ground state} $\phi_{\alpha}$ satisfies the equation
\begin{align*}
-\Delta_p  \phi_{\alpha} + V \phi_{\alpha}^{p-1} - \alpha W \phi_{\alpha}^{p-1} -\lambda(\alpha) \phi_{\alpha}^{p-1} =0 \ \ \mbox{in} \ \R^N , 
\end{align*}
{and since} $V,W$ have compact supports inside $B_R$, we have 
\begin{align*}
-\Delta_p  \phi_{\alpha} -\lambda(\alpha) \phi_{\alpha}^{p-1} =0  \qquad \mbox{in } B_R^c
\end{align*}
for every $\ga >0 $.
Consider the given radial, and radially decreasing function $v_{\alpha} \in W^{1,p}(\R^N)$ (cf. \cite[Theorem~{1.1}]{LZ}).
Recall that the formal radial $p$-Laplacian is given by
\begin{equation}\label{eq:green}
-\Delta_p (v) =-\frac{1}{r^{N-1}}\left( r^{N-1}|v'|^{p-2} v'  \right)' =
-|v'|^{p-2}\left[(p-1)v''+\frac{N-1}{r}v'\right].
\end{equation}
Denoting $\mu_{\alpha}=\big(\frac{\lambda(\alpha)}{1-p}\big)^{1/p}$, a direct computation (cf. \cite[Lemma 5.8]{AMM}) shows that    
\begin{align}
-\Delta_p v_{\alpha} & = (1-p)\mu_{\alpha}^{p} \left(1+\frac{\nu_1}{\mu_{\alpha} |x|} \right)^{p-2} v_{\alpha}^{p-1} + \mu_{\alpha}^{p} \left(1+\frac{\nu_1}{\mu_{\alpha} |x|} \right)^{p-2} \left( \frac{A_{\nu_1}}{\mu_{\alpha}|x|}  +  \frac{B_{\nu_1}}{\mu_{\alpha}^2|x|^2} \right) v_{\alpha}^{p-1}  \nonumber  \\
& = \mu_{\alpha}^{p} \left(1+\frac{\nu_1}{\mu_{\alpha} |x|} \right)^{p-2} \left[\frac{A_{\nu_1}}{\mu_{\alpha}|x|} + \frac{B_{\nu_1}}{\mu_{\alpha}^2|x|^2} -(p-1)  \right] v_{\alpha}^{p-1} \nonumber \\
& = \lambda(\alpha) \left(1+\frac{\nu_1}{\mu_{\alpha} |x|} \right)^{p-2} \left[1-\frac{A_{\nu_1}}{\mu_{\alpha}(p-1)|x|} -\frac{B_{\nu_1}}{\mu_{\alpha}^2(p-1)|x|^2}   \right] v_{\alpha}^{p-1}   \nonumber \\
& = \lambda(\alpha) \left(1+\frac{\nu_1}{\mu_{\alpha} |x|} \right)^{p-2} \left[1+\frac{\nu_1}{\mu_{\alpha}|x|} -\frac{B_{\nu_1}}{\mu_{\alpha}^2(p-1)|x|^2}   \right] v_{\alpha}^{p-1}   \nonumber \\
& \leq  \lambda(\alpha) \left(1+\frac{\nu_1}{\mu_{\alpha} |x|} \right)^{p-1}  v_{\alpha}^{p-1}  \ \ \mbox{in} \ B_R^c \,, \label{Eq:1'}
\end{align}
where $A_{\nu_1}=(N-1)-2\nu_1 (p-1)=1-N<0$ and $B_{\nu_1}=\nu_1(N-p-\nu_1(p-1))=1-N <0 $. 
Subsequently, from \eqref{Eq:1'}, we infer that
$$ -\Delta_p v_{\alpha} - \lambda(\alpha) v_{\alpha}^{p-1} \leq 0  \ \  \mbox{in} \ \ B_{R}^c $$
for $\alpha>0$.
From the above discussion, we conclude that  
\begin{align*}
-\Delta_p  v_{\alpha}  - \lambda(\alpha) v_{\alpha}^{p-1} \leq 0 \leq -\Delta_p  \phi_{\alpha}  - \lambda(\alpha) \phi_{\alpha}^{p-1} \qquad \mbox{in} \ B_{R}^c  
\end{align*}
for $\alpha>0$.  Our aim is now to apply the comparison principle \cite[Theorem B.1 \& Lemma B.2]{AMM} to obtain a lower bound on $\phi_\alpha$ near infinity. To do so we need to compare the  
functions $\phi_\alpha$ and $v_\alpha$ on $\partial B_{R}$.
Clearly, 
$$ 
v_{\alpha} = R^{-\nu_1}\exp\Big(-\left(\frac{ \lambda(\alpha)}{1-p}\right)^{1/p}R\Big)  \leq R^{-\nu_1} \qquad \mbox{on}   \ \partial B_{R} 
$$
for all $\alpha >0$. Next we  find a constant $C(p,N,V)>0$ (independent of $\alpha$ and $W$) such that $ v_{\alpha} \leq C \phi_{\alpha}$ in $\partial B_{R}$ for all $\
\alpha>0$ sufficiently small. Recall that $\phi_0$ satisfies \eqref{eq_est}. Hence there exists $M_1,M_2>0$, independent of $\alpha$ and $W$,  such that $$\frac{M_1}{(1+|x|)^{\frac{N-p}{p-1}}} \leq \phi_0(x) \leq \frac{M_2}{(1+|x|)^{\frac{N-p}{p-1}}} \ \ \mbox{in} \ \ \R^N \,.$$ Since $\phi_{\alpha} \rightarrow \phi_0$ in $L^{\infty}_{\loc}(\R^N)$, it follows that $$\phi_{\alpha}(x)  \geq \frac{M_1}{2R^{\frac{N-p}{p-1}}} \qquad \mbox{on}   \ \partial B_{R} $$
uniformly for sufficiently small $\alpha$. Thus, by taking $\alpha >0$ sufficiently small, we have $$v_{\alpha} \leq \frac{2}{M_1} \phi_{\alpha}  \qquad \mbox{on}   \ \partial B_{R} \, .
$$
The comparison principle \cite[Theorem B.1 \& Lemma B.2]{AMM} now ensures that $v_{\alpha} \leq C \phi_{\alpha}$ in $ B_{R}^c$ for all $\alpha >0$ small enough, with a constant  $C(V,N,p)>0$ independent of $\alpha,W$.
\end{proof}

\begin{proof}[\bf Proof of Proposition \ref{prop:LB1}]
By Lemma \ref{lem-minimizer}, for $\ga>0$ there exists $\phi_{\alpha} \in W^{1,p}(\R^N)$ with $\phi_{\alpha}>0$ and $\phi_{\alpha}(0)=1$ such that 
\begin{align} \label{Eq:lam_al_2}
\lambda(\alpha)=\frac{Q_{\alpha W}(\phi_{\alpha})}{\|\phi_{\alpha}\|_p^p} \,.
\end{align}
Moreover, as an Agmon ground state  $\phi_{\alpha}$ satisfies the equation:
\begin{align*}
-\Delta_p  \phi_{\alpha} + V \phi_{\alpha}^{p-1} - \alpha W \phi_{\alpha}^{p-1} -\lambda(\alpha) \phi_{\alpha}^{p-1} =0 \ \ \mbox{in} \ \R^N \,. 
\end{align*}
As $V,W$ have compact supports inside $B_R$ for some $R>>1$, $\phi_{\alpha}$ is a positive (super)solution of $-\Delta_p \varphi - \lambda(\alpha) |\varphi|^{p-2}\varphi =0$ in $B_R^c$.
 For 
 $$
 \nu_0 = \frac{N-p}{p-1}, 
 $$
 consider the function
\begin{align} \label{Eq:valpha}
{w}_{\alpha}:= |x|^{-\nu_0}\exp\left(-\mu_{\alpha}|x|\right) \ \ \mbox{in} \ \ \R^N{\setminus \{0\}} \,,
\end{align}
where $\mu_{\alpha}=\left(\frac{2\lambda(\alpha)}{1-p}\right)^{1/p}$. Observe that ${w}_{\alpha} \in W^{1,p}(\R^N)$ and that it is radially decreasing. A direct computation (cf. \cite[Lemma 5.8]{AMM} or use \eqref{eq:green}) shows that    
\begin{align}
-\Delta_p {w}_{\alpha} & = (1-p)\mu_{\alpha}^{p} \left(1+\frac{\nu_0}{\mu_{\alpha} |x|} \right)^{p-2} {w}_{\alpha}^{p-1} + \mu_{\alpha}^{p} \left(1+\frac{\nu_0}{\mu_{\alpha} |x|} \right)^{p-2} \frac{A_{\nu_0}}{\mu_{\alpha}|x|} {w}_{\alpha}^{p-1} \nonumber  \\
& = \mu_{\alpha}^{p} \left(1+\frac{\nu_0}{\mu_{\alpha} |x|} \right)^{p-2} \left[\frac{A_{\nu_0}}{\mu_{\alpha}|x|} -(p-1)  \right] {w}_{\alpha}^{p-1} \nonumber \\
& = \lambda(\alpha) \left(1+\frac{\nu_0}{\mu_{\alpha} |x|} \right)^{p-2} 2 \left[1-\frac{A_{\nu_0}}{\mu_{\alpha}(p-1)|x|}   \right] {w}_{\alpha}^{p-1}  \ \ \ \ \mbox{in} \ \ \R^N{\setminus \{0\}} \,, \label{Eq:1}
\end{align}
where $A_{\nu_0}=(N-1)-2\nu_0 (p-1)$. Note that we can find $L>>1$ independent of $\alpha$ such that  
$$ \left(1+\frac{\nu_0}{\mu_{\alpha} |x|} \right)^{p-2} 2\left[1-\frac{A_{\nu_0}}{\mu_{\alpha}(p-1)|x|}   \right] \geq 1 \ \ \ \  \  \ \ \mbox{if } \ \ {|x| > \frac{L}{\mu_{\alpha}}}:=R_{\alpha} \,.$$
Clearly,  $R_{\alpha} \rightarrow \infty$ as $\alpha \rightarrow 0$ (since $\mu_{\alpha} \rightarrow 0$ as $\alpha \rightarrow 0$). Recall that $\lambda(\alpha)<0$, therefore, \eqref{Eq:1} implies that
$$ -\Delta_p w_{\alpha} - \lambda(\alpha) w_{\alpha}^{p-1} \leq 0  \ \ \ \  \  \mbox{if} \ \ B_{R_{\alpha}}^c $$
for $\alpha>0$.
From the above discussion, we conclude that  
\begin{align*}
-\Delta_p w_{\alpha}  -\lambda(\alpha) w_{\alpha}^{p-1} \leq 0 \leq -\Delta_p \phi_{\alpha}    -\lambda(\alpha) \phi_{\alpha}^{p-1}  \qquad \mbox{in} \ \ B_{R_{\alpha}}^c   
\end{align*}
for $\alpha>0$ sufficiently small. Using \cite[Lemma A.IV]{BL}, we get
\begin{align*}
w_{\alpha} \leq C(N,p) \frac{\|w_{\alpha}\|_{L^s(\R^N)}}{|x|^{\frac{N-1}{p-1}}} \ \ \ \mbox{in} \ \ \R^N\setminus \{0\} \,,
\end{align*}
where $s=\frac{N(p-1)}{N-1} \geq 1$ as $p\geq 2-\frac{1}{N}$. Since $\|w_{\alpha}\|_{L^s(\R^N)} \leq C(N,p) \mu_{\alpha}^{N(N-p)/(N-1)p}$ for all $\alpha$, it implies that $\|w_{\alpha}\|_{L^s(\R^N)} \leq C(N,p)$ for all $\alpha >0$ sufficiently small (as $\mu_{\alpha} \rightarrow 0$ when $\alpha \rightarrow 0$ and $p<N$). Thus, it  follows that there exists $C(N,p)>0$ such that
$$ C(N,p) \exp(-L) w_{\alpha} \leq R_{\alpha}^{-\nu_1} \exp(-L) = R_{\alpha}^{-{\nu_{1}}}\exp\Big(-\left(\frac{\lambda(\alpha)}{1-p}\right)^{1/p} R_{\alpha}\Big)   \qquad \mbox{on}   \ \partial B_{R_{\alpha}} $$
for small $\alpha$, where ${\nu_{1}}=  \frac{N-1}{p-1}$.  By Lemma \ref{Lem:LEst1} 
there exists $C >0$ such that
$$ \phi_{\alpha} \geq C |x|^{-{\nu_{1}}}\exp\Big(-\left(\frac{\lambda(\alpha)}{1-p}\right)^{1/p}|x|\Big) \qquad \mbox{on}   \ B_{R}^c  $$
for all $\alpha >0$ sufficiently small.
Hence, $\phi_{\alpha} \geq  C_1 w_{\alpha}$ on $\partial B_{R_{\alpha}}$, where $C_1>0$ is independent of $\alpha$. Now we apply the comparison principle \cite[Theorem B.1 \& Lemma B.2]{AMM} to ensure that $ \phi_{\alpha} \geq C_1 w_{\alpha}$ in $ B_{R_{\alpha}}^c$ for sufficiently small $\alpha$. Using this, one can estimate
\begin{align*} 
\|\phi_{\alpha}\|_p^p= \int_{\R^N} |\phi_{\alpha}|^p \dx \geq  \int_{B_{R_{\alpha}}^c} |\phi_{\alpha}|^p \dx & \geq C_1^p \int_{B_{R_{\alpha}}^c} |w_{\alpha}|^p \dx \geq C_2^p (-\lambda(\alpha))^{\nu_0-\frac{N}{p}}\int_{B_{L}^c} |\hat{w}|^p \dx  \,,
\end{align*}
for sufficiently small $\alpha$, where \begin{align*} 
\hat{w}:= |x|^{-\nu_0}\exp\left(-2^{1/p}p|x|\right) {\in L^p(\R^N)} \,. 
\end{align*}
Hence, we obtain a positive $\alpha,W$-independent constant $C_2>0$ such that \begin{align} \label{Eq:Lpest}
\|\phi_{\alpha}\|_p^p \geq C_2 (-\lambda(\alpha))^{\nu_0-\frac{N}{p}}
\end{align} 
for all $\alpha>0$ sufficiently small. Using this estimate in \eqref{Eq:lam_al_2}, we get
\begin{align*}
\lambda(\alpha) \geq - \alpha \frac{\int_{\R^N}W |\phi_{\alpha}^p| \dx}{\|\phi_{\alpha}\|_p^p} \geq - \frac{C\alpha}{(-\lambda(\alpha))^{\nu_0-\frac{N}{p}}} \int_{\R^N}W |\phi_{\alpha}^p| \dx \,,
\end{align*}
where $C=C_2^{-1}$.
Now the claim follows because $\int_{\R^N}W |\phi_{\alpha}^p| \dx \rightarrow \int_{\R^N}W |\phi_0^p| \dx$ as $\alpha \rightarrow 0_+$.
\end{proof}

\begin{prop} \label{prop:LB2}
Let $1< p<N \leq p^2$ be such that $p<(N+1)/2$  and $V,W \in C_c(\R^N)$ such that $V$ is critical in $\R^N$. Then there exists {a $W$-independent} $C(p,N,V)>0$  such that 
$$ \liminf_{\alpha \rightarrow 0_+} \alpha^{-\frac{p(p-1)}{N-p}}\lambda(\alpha)  \geq -C \left(\int_{\R^N} W|\phi_0|^p \dx\right)^{{\frac{p(p-1)}{N-p}}}  \,.$$
 \end{prop}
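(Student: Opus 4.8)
The plan is to reproduce, for the range $p<(N+1)/2$, the three-step scheme behind Proposition~\ref{prop:LB1}: (i) a pointwise lower bound on the minimizer $\phi_\alpha$ in a neighbourhood of infinity; (ii) an order-sharp lower bound on $\|\phi_\alpha\|_p^p$ obtained by integrating it; (iii) insertion into the Rayleigh quotient $\lambda(\alpha)=Q_{\alpha W}[\phi_\alpha]/\|\phi_\alpha\|_p^p$. The only real change is that the hypothesis $p<(N+1)/2$ turns the auxiliary comparison function into a subsolution on the \emph{whole} exterior region, so that the intermediate radius $R_\alpha$ and the radial estimate \cite[Lemma~A.IV]{BL} — which forced $s=\tfrac{N(p-1)}{N-1}\ge1$, i.e.\ $p\ge2-\tfrac1N$, in Proposition~\ref{prop:LB1} — are no longer needed. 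This is the content of Lemma~\ref{Lem:LEst2}.

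The starting observation is that, with $\nu_0=\tfrac{N-p}{p-1}$, the constant appearing in \eqref{Eq:1} is $A_{\nu_0}=(N-1)-2\nu_0(p-1)=2p-N-1$, and the assumption $p<(N+1)/2$ is exactly $A_{\nu_0}<0$, equivalently $\nu_0>1$. I would fix $R$ with $\supp V\cup\supp W\subset B_R$ (so that $\phi_\alpha$ solves $-\Delta_p\phi_\alpha=\lambda(\alpha)\phi_\alpha^{p-1}$ in $B_R^c$) and set $w_\alpha(x)=|x|^{-\nu_0}\exp(-\mu_\alpha|x|)$ with $\mu_\alpha=\big(c_0\lambda(\alpha)/(1-p)\big)^{1/p}$ for a constant $c_0=c_0(p,N)\ge2$ to be chosen. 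Repeating the computation \eqref{Eq:1} gives
\[
-\Delta_p w_\alpha=\lambda(\alpha)\Big(1+\tfrac{\nu_0}{\mu_\alpha|x|}\Big)^{p-2} c_0\Big[1+\tfrac{|A_{\nu_0}|}{(p-1)\mu_\alpha|x|}\Big]w_\alpha^{p-1}\qquad\text{in }\R^N\setminus\{0\},
\]
so that, since $\lambda(\alpha)<0$, $w_\alpha$ is a subsolution of $-\Delta_p\varphi-\lambda(\alpha)|\varphi|^{p-2}\varphi=0$ in $B_R^c$ precisely when $c_0(1+u)^{p-2}\big(1+\tfrac{|A_{\nu_0}|}{(p-1)\nu_0}u\big)\ge1$ for all $u=\nu_0/(\mu_\alpha|x|)\ge0$. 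This quantity equals $c_0$ at $u=0$ and tends to $+\infty$ as $u\to\infty$ (here $p>1$ and $|A_{\nu_0}|>0$ are used); for $p\ge2$ it is $\ge c_0\ge1$, and for $1<p<2$ it has a positive minimum over $u>0$ depending only on $p,N$, so a sufficiently large $c_0=c_0(p,N)$ makes it $\ge1$ everywhere. Next, $\phi_\alpha\to\phi_0$ in $L^\infty_{\loc}(\R^N)$ and \eqref{eq_est} give $\phi_\alpha\ge\tfrac12 M_1(1+R)^{-\nu_0}$ on $\partial B_R$ for small $\alpha$, while $w_\alpha\le R^{-\nu_0}$ there, so $w_\alpha\le C\phi_\alpha$ on $\partial B_R$ with $C=C(p,N,V)$ (independent of $R$, since $((1+R)/R)^{\nu_0}\le2^{\nu_0}$ for $R\ge1$). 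The comparison principle \cite[Theorem~B.1 \& Lemma~B.2]{AMM} on $B_R^c$ then yields $\phi_\alpha\ge C^{-1}w_\alpha$ in $B_R^c$ for all small $\alpha$ — this is Lemma~\ref{Lem:LEst2}.

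From here the remaining two steps are as in Proposition~\ref{prop:LB1}. Integrating and substituting $y=\mu_\alpha x$,
\[
\|\phi_\alpha\|_p^p\ \ge\ C^{-p}\int_{B_R^c}|x|^{-\nu_0p}e^{-p\mu_\alpha|x|}\dx\ =\ C^{-p}\mu_\alpha^{-(N-\nu_0p)}\int_{B_{\mu_\alpha R}^c}|y|^{-\nu_0p}e^{-p|y|}\dy,
\]
and since $N<p^2$ gives $\nu_0p<N$, the last integral converges, as $\mu_\alpha\to0_+$, to $\int_{\R^N}|y|^{-\nu_0p}e^{-p|y|}\dy>0$, a $W$-independent constant. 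Because $\mu_\alpha^p\asymp-\lambda(\alpha)$ and $N-\nu_0p=\tfrac{p^2-N}{p-1}>0$, this is exactly \eqref{Eq:Lpest}: $\|\phi_\alpha\|_p^p\ge C_2(-\lambda(\alpha))^{\nu_0-N/p}$ for small $\alpha$, with $C_2=C_2(p,N,V)$. Inserting it into $\lambda(\alpha)\ge-\alpha\|\phi_\alpha\|_p^{-p}\int_{\R^N}W\phi_\alpha^p\dx$, and using $\int_{\R^N}W\phi_\alpha^p\dx\to\int_{\R^N}W\phi_0^p\dx$ as $\alpha\to0_+$ (proved, using that $\supp W$ is compact, exactly as in Proposition~\ref{Prop:conv_la_al}), we obtain $(-\lambda(\alpha))^{1+\nu_0-N/p}\le C_2^{-1}\alpha\int_{\R^N}W\phi_\alpha^p\dx$. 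Since $1+\nu_0-N/p=\tfrac{N-p}{p(p-1)}$, raising both sides to the power $\tfrac{p(p-1)}{N-p}$ and passing to the $\liminf$ gives the assertion, with $C=C(p,N,V)$.

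The main obstacle will be step (i), i.e.\ Lemma~\ref{Lem:LEst2}: one must check that the differential inequality for $w_\alpha$ holds throughout $B_R^c$, and not merely near infinity as in Proposition~\ref{prop:LB1}. For $p\ge2$ this is immediate from $A_{\nu_0}<0$; the delicate case is $1<p<2$, where the factor $(1+\nu_0/(\mu_\alpha|x|))^{p-2}$ becomes small near $\partial B_R$ as $\alpha\to0_+$, and one has to verify that enlarging the constant $c_0$ in $\mu_\alpha$ — which leaves $\mu_\alpha^p\asymp-\lambda(\alpha)$, hence all the exponents, unchanged — restores the inequality uniformly. It is here that $\nu_0>1$ is genuinely used (so that $|A_{\nu_0}|>0$ and the compensating growing term $|A_{\nu_0}|/((p-1)\mu_\alpha|x|)$ is available), and not merely $s\ge1$; this is precisely why the range treated here lies outside the scope of Proposition~\ref{prop:LB1}. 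Once Lemma~\ref{Lem:LEst2} is in hand, steps (ii) and (iii) are the same, and in fact simpler, computations as in Proposition~\ref{prop:LB1}.
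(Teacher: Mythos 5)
Your argument is correct and is essentially the paper's own proof: the same family of comparison functions $|x|^{-\nu_0}\exp(-\beta\mu_\alpha|x|)$ (your $c_0$ is the paper's $\beta^p$), the same observation that $p<(N+1)/2$ gives $A_{\nu_0}<0$ so that enlarging this constant makes the function a subsolution on all of $B_R^c$, the same boundary comparison via $\phi_\alpha\to\phi_0$ in $L^\infty_{\loc}$ together with \eqref{eq_est}, and the same scaling of the $L^p$-norm followed by insertion into the Rayleigh quotient. The only point to patch is the endpoint $N=p^2$, where $\nu_0 p=N$ and $\int_{\R^N}|y|^{-\nu_0 p}e^{-p|y|}\dy$ diverges at the origin; there one simply bounds the rescaled integral from below by its restriction to $B_1^c$ (the paper avoids this by rescaling $B_{R_\alpha}^c$ back to $B_R^c$), which still yields $\|\phi_\alpha\|_p^p\geq C_2(-\lambda(\alpha))^{\nu_0-N/p}$ and hence the claim.
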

As in the proof of Proposition \ref{prop:LB1}, we first prove a pointwise lower bound on the minimizer $\phi_{\alpha}$ near infinity.

\begin{lem} \label{Lem:LEst2}
Let $1< p<N \leq p^2$ be such that $p<(N+1)/2$  and $V,W \in C_c(\R^N)$ such that $V$ is critical in $\R^N$ and support of $V,W$ are contained inside $B_R$ for some $R>>1$. Assume that $\phi_{\alpha} \in W^{1,p}(\R^N)$ is a minimizer of $\lambda(\alpha)$ with $\phi_{\alpha}>0$ and $\phi_{\alpha}(0)=1$.  Then there exist $\alpha, W$-independent positive constants $C(V,N,p)$ and $\beta$   such that  
\begin{align*}
  \phi_{\alpha} \geq  C   v_{\alpha,\beta} \qquad \mbox{on} \ B_{R}^c \,, 
\end{align*}
 where $v_{\alpha,\beta} \in W^{1,p}(\R^N)$ is a radial function such that $$v_{\alpha,\beta}= |x|^{-\nu_0}\exp\left(-\left(\frac{\lambda(\alpha)}{1-p}\right)^{1/p}\beta |x|\right) \ \ \mbox{in} \ \ B_R^c \,, \ \  \ \ \nu_0=\frac{N-p}{p-1} .$$
\end{lem}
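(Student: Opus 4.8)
The plan is to mimic the proof of Lemma~\ref{Lem:LEst1}: show that $v_{\alpha,\beta}$ is a subsolution, in $B_R^c$, of the equation $-\Delta_p w-\lambda(\alpha)w^{p-1}=0$ solved by $\phi_\alpha$ there, then compare the two functions on $\partial B_R$, and finally invoke the comparison principle \cite[Theorem~B.1 \& Lemma~B.2]{AMM} to propagate the inequality to all of $B_R^c$. Since $\supp V\cup\supp W\subset B_R$, the Euler--Lagrange equation for the Agmon ground state $\phi_\alpha$ (as in the proof of Proposition~\ref{prop:LB1}) reduces to $-\Delta_p\phi_\alpha-\lambda(\alpha)\phi_\alpha^{p-1}=0$ in $B_R^c$.

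First I would compute $-\Delta_p v_{\alpha,\beta}$. Setting $\mu=\beta\big(\tfrac{\lambda(\alpha)}{1-p}\big)^{1/p}$, so that $v_{\alpha,\beta}=|x|^{-\nu_0}e^{-\mu|x|}$ in $B_R^c$, the computation performed for $w_\alpha$ in the proof of Proposition~\ref{prop:LB1} (via \eqref{eq:green}, cf.~\cite[Lemma~5.8]{AMM}) applies verbatim, the only change being the value of $\mu$; the $|x|^{-2}$ term cancels because the exponent is $\nu_0=\tfrac{N-p}{p-1}$, and, using $(p-1)\mu^p=-\beta^p\lambda(\alpha)$, one gets
\begin{equation*}
-\Delta_p v_{\alpha,\beta}=\lambda(\alpha)\,\Psi_\alpha\,v_{\alpha,\beta}^{\,p-1},\qquad
\Psi_\alpha(x)=\beta^{p}\Big(1+\tfrac{\nu_0}{\mu|x|}\Big)^{p-2}\Big(1+\tfrac{N+1-2p}{(p-1)\mu|x|}\Big)\quad\text{in }B_R^c.
\end{equation*}
Here the hypothesis $p<(N+1)/2$ is used: since then $N+1-2p>0$, the function $g(t):=(1+\nu_0 t)^{p-2}\big(1+\tfrac{N+1-2p}{p-1}\,t\big)$ is continuous and strictly positive on $[0,\infty)$, equals $1$ at $t=0$, and tends to $+\infty$ as $t\to\infty$; hence $m:=\inf_{t\ge 0}g(t)$ is a positive constant depending only on $N,p$. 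Choosing $\beta:=m^{-1/p}\,(\ge 1)$ forces $\Psi_\alpha(x)=\beta^p g\big(\tfrac1{\mu|x|}\big)\ge\beta^p m=1$ for every $x\ne 0$ and every $\alpha>0$, so that, as $\lambda(\alpha)<0$,
\begin{equation*}
-\Delta_p v_{\alpha,\beta}-\lambda(\alpha)\,v_{\alpha,\beta}^{\,p-1}=\lambda(\alpha)\big(\Psi_\alpha-1\big)v_{\alpha,\beta}^{\,p-1}\le 0\qquad\text{in }B_R^c.
\end{equation*}

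The remaining steps are routine repetitions of arguments used earlier. On $\partial B_R$ we have $v_{\alpha,\beta}=R^{-\nu_0}e^{-\mu R}\le R^{-\nu_0}$ for all $\alpha>0$, whereas Proposition~\ref{prop-Lp} and the convergence $\phi_\alpha\to\phi_0$ in $L^\infty_\loc(\R^N)$ (Harnack convergence, as in the proofs of Theorem~\ref{thm-linear} and Lemma~\ref{Lem:LEst1}) give $\phi_\alpha\ge c_R>0$ on $\partial B_R$ for all sufficiently small $\alpha$, with $c_R$ depending only on $V,N,p$; hence $v_{\alpha,\beta}\le C\phi_\alpha$ on $\partial B_R$ with $C=C(V,N,p)$ for $\alpha$ small. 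Because the equation $-\Delta_p w-\lambda(\alpha)w^{p-1}=0$ is positively homogeneous of degree $p-1$, $Cv_{\alpha,\beta}$ is again a nonnegative subsolution in $B_R^c$, while $\phi_\alpha$ is a positive solution there of minimal growth at infinity; since $Cv_{\alpha,\beta}\le\phi_\alpha$ on $\partial B_R$, the comparison principle \cite[Theorem~B.1 \& Lemma~B.2]{AMM} gives $Cv_{\alpha,\beta}\le\phi_\alpha$ in $B_R^c$, which is the assertion after renaming the constant. (That $v_{\alpha,\beta}\in W^{1,p}(\R^N)$ after a suitable radial extension into $B_R$ follows from its exponential decay, cf.~\cite[Theorem~1.1]{LZ}.)

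The main obstacle is the first step: identifying the exact form of $-\Delta_p v_{\alpha,\beta}$, in particular the cancellation of the $|x|^{-2}$ term for the exponent $\nu_0$, and then extracting from $\Psi_\alpha$ the bound $\Psi_\alpha\ge 1$ valid on all of $B_R^c$ (not merely in a neighborhood of infinity) and uniformly in small $\alpha$, with a single $\beta=\beta(N,p)$. This uniform-in-$|x|$ control, made possible precisely by $p<(N+1)/2$, is what allows the conclusion directly on $B_R^c$ and thereby bypasses the two-step bootstrapping (through the auxiliary radius $R_\alpha$) that was needed in the proof of Proposition~\ref{prop:LB1}.
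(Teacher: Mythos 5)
Your proposal is correct and follows essentially the same route as the paper: compute $-\Delta_p v_{\alpha,\beta}$ via \eqref{eq:green} (the $|x|^{-2}$ term indeed vanishes for the exponent $\nu_0$), use $p<(N+1)/2$ to get $A_{\nu_0}=2p-N-1<0$ and choose a fixed $\beta=\beta(N,p)$ large enough to make $v_{\alpha,\beta}$ a subsolution of $-\Delta_p w-\lambda(\alpha)w^{p-1}=0$ on all of $B_R^c$, then compare with $\phi_\alpha$ on $\partial B_R$ using $\phi_\alpha\to\phi_0$ in $L^\infty_{\loc}$ and the two-sided bound \eqref{eq_est}, and conclude by \cite[Theorem B.1 \& Lemma B.2]{AMM}. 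The only cosmetic difference is how $\beta$ is fixed: you minimize the explicit function $g$ over $[0,\infty)$, whereas the paper uses the algebraic bound $\nu_0(p-1)/|A_{\nu_0}|\ge 1$ to reduce to $(1+t)^{p-1}\ge 1$; both yield the same $W$- and $\alpha$-independent choice.
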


\begin{proof}
{Recall that the ground state} $\phi_{\alpha}$ satisfies the equation
\begin{align*}
-\Delta_p  \phi_{\alpha} + V \phi_{\alpha}^{p-1} - \alpha W \phi_{\alpha}^{p-1} -\lambda(\alpha) \phi_{\alpha}^{p-1} =0 \ \ \mbox{in} \ \R^N , 
\end{align*}
{and since} $V,W$ have compact supports, there exists  $R>1$ such that 
\begin{align*}
-\Delta_p  \phi_{\alpha} -\lambda(\alpha) \phi_{\alpha}^{p-1} =0  \qquad \mbox{in } B_R^c
\end{align*}
for every $\ga >0 $. Now consider the function
\begin{align*} 
v_{\alpha,\beta}(x):= |x|^{-\nu_{0}}\exp\left(-\left(\frac{\lambda(\alpha)}{1-p}\right)^{1/p} \beta |x|\right) \ \ \mbox{in} \ \R^N\setminus \{0\} \,,
\end{align*}
for some $\beta >0$ that will be chosen later. 

Denoting $\mu_{\alpha}=(\frac{\lambda(\alpha)}{1-p})^{1/p}$, a direct computation (cf. \cite[Lemma 5.8]{AMM} or use \eqref{eq:green}) shows that    
\begin{align}
-\Delta_p v_{\alpha,\beta} & = (1-p)\mu_{\alpha}^{p} \beta^p \left(1+\frac{\nu_{0}}{\mu_{\alpha}\beta |x|} \right)^{p-2} v_{\alpha,\beta}^{p-1} + \mu_{\alpha}^{p} \beta^p \left(1+\frac{\nu_{0}}{\mu_{\alpha} \beta |x|} \right)^{p-2}  \frac{A_{\nu_{0}}}{\mu_{\alpha}\beta|x|}   v_{\alpha,\beta}^{p-1}  \nonumber  \\
& = \mu_{\alpha}^{p}\beta^p \left(1+\frac{\nu_{0}}{\mu_{\alpha}\beta |x|} \right)^{p-2} \left[\frac{A_{\nu_{0}}}{\mu_{\alpha}\beta|x|}  -(p-1)  \right] v_{\alpha,\beta}^{p-1} \nonumber \\
& = \lambda(\alpha)\beta^p \left(1+\frac{\nu_{0}}{\mu_{\alpha}\beta |x|} \right)^{p-2} \left[1+\frac{|A_{\nu_{0}}|}{\mu_{\alpha}\beta(p-1)|x|}  \right] v_{\alpha,\beta}^{p-1} \nonumber  \\
& = \lambda(\alpha)\beta^p \frac{|A_{\nu_{0}}|}{\nu_{0}(p-1)} \left(1+\frac{\nu_{0}}{\mu_{\alpha}\beta |x|} \right)^{p-2} \left[\frac{\nu_{0}(p-1)}{|A_{\nu_{0}}|}+\frac{\nu_{0}}{\mu_{\alpha}\beta|x|}  \right] v_{\alpha,\beta}^{p-1} \nonumber   \\
& \leq  \lambda(\alpha)\beta^p \frac{|A_{\nu_{0}}|}{\nu_{0}(p-1)} \left(1+\frac{\nu_{0}}{\mu_{\alpha}\beta |x|} \right)^{p-1}  v_{\alpha,\beta}^{p-1}  \ \ \ \  \ \mbox{in} \ \R^N{\setminus \{0\}} \,, \label{Eq:1'2}
\end{align}
where $A_{\nu_{0}}=(N-1)-2\nu_{0} (p-1)<0$ as $p<\frac{N+1}{2}$. The last inequality uses the fact that $\frac{\nu(p-1)}{|A_{\nu_{0}}|} \geq 1$. Subsequently, by taking $\beta$ large enough in \eqref{Eq:1'2}, we infer that
$$ -\Delta_p v_{\alpha,\beta} - \lambda(\alpha) v_{\alpha,\beta}^{p-1} \leq 0  \ \  \ \ \mbox{in} \ \ B_{R}^c $$
for all $\alpha>0$.
From the above discussion, we conclude that  
\begin{align*}
-\Delta_p  v_{\alpha,\beta}  - \lambda(\alpha) v_{\alpha,\beta}^{p-1} \leq 0 \leq -\Delta_p  \phi_{\alpha}  - \lambda(\alpha) \phi_{\alpha}^{p-1} \qquad \mbox{in} \ B_{R}^c  
\end{align*}
for  $\alpha>0$.  As in the proof of Lemma \ref{Lem:LEst1} we now apply the comparison principle \cite[Theorem B.1 \& Lemma B.2]{AMM} to ensure that there exists an $\alpha,W$-independent $C(p,N,V)>0$ such that $ v_{\alpha,\beta} \leq C \phi_{\alpha}$ in $ B_{R}^c$ for all $\
\alpha>0$ sufficiently small.
Clearly, 
$$ v_{\alpha,\beta} = R^{-\nu_{0}}\exp\Big(-\left(\frac{ \lambda(\alpha)}{1-p}\right)^{1/p}\beta R\Big)  \leq R^{-\nu_{0}} \qquad \mbox{on}   \ \partial B_{R} $$
for all $\alpha >0$. Since $\phi_0$ satisfies \eqref{eq_est}, there exist constants $M_1,M_2>0$, independent of $\alpha$ and $W$,  such that $$\frac{M_1}{(1+|x|)^{\frac{N-p}{p-1}}} \leq \phi_0(x) \leq \frac{M_2}{(1+|x|)^{\frac{N-p}{p-1}}} \ \ \mbox{in} \ \ \R^N \,.$$ Since $\phi_{\alpha} \rightarrow \phi_0$ in $L^{\infty}_{\loc}(\R^N)$, it follows that $$\phi_{\alpha}(x)  \geq \frac{M_1}{2R^{\frac{N-p}{p-1}}} \qquad \mbox{on}   \ \partial B_{R} $$
uniformly for sufficiently small $\alpha$. Thus, by taking $\alpha >0$ sufficiently small, we have $$v_{\alpha,\beta} \leq \frac{2}{M_1} \phi_{\alpha}  \qquad \mbox{on}   \ \partial B_{R} \,.$$ An applicatin of the comparison principle \cite[Theorem B.1 \& Lemma B.2]{AMM} thus ensures that $v_{\alpha,\beta} \leq C \phi_{\alpha}$ in $ B_{R}^c$ for all $\alpha >0$ sufficiently small, where $C>0$ is independent of $\alpha$ and $W$.
\end{proof} 
\begin{proof}[\bf Proof of Proposition \ref{prop:LB2}]
By Lemma \ref{lem-minimizer}, for $\ga>0$ there exists $\phi_{\alpha} \in W^{1,p}(\R^N)$ with $\phi_{\alpha}>0$ and $\phi_{\alpha}(0)=1$ such that 
\begin{align} \label{Eq:lam_al_22}
\lambda(\alpha)=\frac{Q_{\alpha W}(\phi_{\alpha})}{\|\phi_{\alpha}\|_p^p} \,.
\end{align}
Moreover, as an Agmon ground state  $\phi_{\alpha}$ satisfies 
\begin{align*}
-\Delta_p  \phi_{\alpha} + V \phi_{\alpha}^{p-1} - \alpha W \phi_{\alpha}^{p-1} -\lambda(\alpha) \phi_{\alpha}^{p-1} =0 \ \ \mbox{in} \ \R^N \,. 
\end{align*}
As $V,W$ have compact supports inside $B_R$ for some $R>>1$, $\phi_{\alpha}$ is a positive (super)solution of $-\Delta_p \varphi - \lambda(\alpha) |\varphi|^{p-2}\varphi =0$ in $B_R^c$. We have seen in Lemma \ref{Lem:LEst2} that 
there exist $\beta,C(V,N,p) >0$ such that
$$ \phi_{\alpha} \geq C\, v_{\alpha,\beta} \qquad \mbox{on}   \ B_{R}^c  $$
for all $\alpha >0,$ where
$$ v_{\alpha,\beta}= |x|^{-{\nu_{0}}}\exp\Big(-\left(\frac{\lambda(\alpha)}{1-p}\right)^{1/p} \beta |x|\Big) \qquad \mbox{on}   \ B_{R}^c  \,.$$
 Take $R_{\alpha}=R/\mu_{\alpha}$. Then $R_{\alpha} \geq R$ for all $\alpha$ sufficiently small (as $\mu_{\alpha} \rightarrow 0$ when $\alpha \rightarrow 0$). Thus, $\phi_{\alpha} \geq  C v_{\alpha,\beta}$ on $ B_{R_{\alpha}}^c$, where $C>0$ is independent of $\alpha,W$. Using this, one can estimate
\begin{align} 
\|\phi_{\alpha}\|_p^p= \int_{\R^N} |\phi_{\alpha}|^p \dx \geq  \int_{B_{R_{\alpha}}^c} |\phi_{\alpha}|^p \dx & \geq C^p \int_{B_{R_{\alpha}}^c} |w_{\alpha}|^p \dx \geq C_1^p (-\lambda(\alpha))^{\nu_{0}-\frac{N}{p}}\int_{B_{R}^c} |\hat{w}|^p \dx  , \label{Eq1}
\end{align}
for sufficiently small $\alpha$, where \begin{align*} 
\hat{w}:= |x|^{-\nu_{0}}\exp\left(-p|x|\right) {\in L^p(\R^N)} \,.
\end{align*}
Hence, we obtain a positive $\alpha,W$-independent constant $C_2>0$ such that 
\begin{align} \label{Eq:Lpest2}
\|\phi_{\alpha}\|_p^p \geq C_2 (-\lambda(\alpha))^{\nu_0-\frac{N}{p}}
\end{align} 
for all $\alpha>0$ sufficiently small. Using this estimate in \eqref{Eq:lam_al_22}, we get
\begin{align*}
\lambda(\alpha) \geq - \alpha\ \frac{\int_{\R^N}W |\phi_{\alpha}^p| \dx}{\|\phi_{\alpha}\|_p^p} \geq - \frac{C\alpha}{(-\lambda(\alpha))^{\nu_0-\frac{N}{p}}} \int_{\R^N}W |\phi_{\alpha}^p| \dx \,,
\end{align*}
where $C=C_2^{-1}$.
The proposition follows because $\int_{\R^N}W |\phi_{\alpha}^p| \dx \rightarrow \int_{\R^N}W |\phi_0^p| \dx$ as $\alpha \rightarrow 0_+$.
\end{proof}

Note that if $N=p^2$, then $\frac{p(p-1)}{N-p}= {1}$. Thus, the lower bound of Proposition \ref{prop:LB2} {is actually weaker than the estimate given in} Proposition \ref{Prop:conv_la_al} when $N=p^2$. Nevertheless, replacing the crude estimate in \eqref{Eq1} with an improved one, we obtain a better lower bound of $\lambda(\alpha)$ when $N=p^2$.

 \begin{prop} \label{prop:LB3}
 Let $1< p<N = p^2$ and let $V,W \in C_c(\R^N)$ such that $V$ is critical in $\R^N$. Then there exists $C>0$  such that 
$$ \liminf_{\alpha \rightarrow 0_+}\frac{\lambda(\alpha) |\log (\alpha)|}{\alpha} \geq -C \int_{\R^N} W|\phi_0|^p \dx  \,.$$
\end{prop}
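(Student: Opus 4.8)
The plan is to follow the scheme of Propositions \ref{prop:LB1} and \ref{prop:LB2}, replacing the crude lower bound on $\|\phi_\alpha\|_p^p$ by a sharper one that detects its logarithmic blow-up. By Lemma \ref{lem-minimizer}, for each small $\alpha>0$ there is a minimizer $\phi_\alpha\in W^{1,p}(\R^N)$ with $\phi_\alpha>0$, $\phi_\alpha(0)=1$, and therefore
\[
\lambda(\alpha)=\frac{Q_{\alpha W}(\phi_\alpha)}{\|\phi_\alpha\|_p^p}\ \geq\ -\,\alpha\,\frac{\int_{\R^N}W|\phi_\alpha|^p\dx}{\|\phi_\alpha\|_p^p}\,.
\]
Since $N=p^2$ and $p>1$, the inequality $p<(N+1)/2$ holds (it reduces to $(p-1)^2>0$), so Lemma \ref{Lem:LEst2} applies and provides $W$-independent constants $C,\beta>0$ such that $\phi_\alpha\ge C\,v_{\alpha,\beta}$ on $B_R^c$, where $v_{\alpha,\beta}=|x|^{-\nu_0}\exp(-\mu_\alpha\beta|x|)$ with $\nu_0=(N-p)/(p-1)=p$ and $\mu_\alpha=\big((-\lambda(\alpha))/(p-1)\big)^{1/p}\to 0$ as $\alpha\to0_+$.

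The key point is that, precisely because $p\nu_0=p^2=N$ when $N=p^2$, integrating $|v_{\alpha,\beta}|^p$ over $B_R^c$ in polar coordinates produces a logarithmically divergent integral:
\[
\|\phi_\alpha\|_p^p\ \ge\ C^p\int_{B_R^c}|x|^{-N}\exp(-p\mu_\alpha\beta|x|)\dx\ =\ C^p\,\omega_{N-1}\int_R^\infty\frac{\exp(-p\mu_\alpha\beta r)}{r}\dr\,.
\]
Bounding this integral below by $e^{-1}\int_R^{(p\mu_\alpha\beta)^{-1}}r^{-1}\dr=e^{-1}\log\frac{1}{p\mu_\alpha\beta R}$ (valid once $\mu_\alpha<(p\beta R)^{-1}$) and using $\mu_\alpha\asymp(-\lambda(\alpha))^{1/p}$, one obtains a $W$-independent constant $c>0$ with
\[
\|\phi_\alpha\|_p^p\ \ge\ c\,\big|\log(-\lambda(\alpha))\big|\qquad\text{for all }\alpha>0\text{ small enough}\,.
\]
This is exactly the step where the rescaling $R\mapsto R/\mu_\alpha$ employed in Propositions \ref{prop:LB1}--\ref{prop:LB2} must be avoided, since it would wash out the logarithmic gain.

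It remains to combine these bounds and to convert $|\log(-\lambda(\alpha))|$ into $|\log\alpha|$. From the two inequalities above, $\lambda(\alpha)\ge -\dfrac{\alpha}{c\,|\log(-\lambda(\alpha))|}\int_{\R^N}W|\phi_\alpha|^p\dx$; multiplying by $|\log\alpha|/\alpha>0$ gives
\[
\frac{\lambda(\alpha)\,|\log\alpha|}{\alpha}\ \ge\ -\,\frac{|\log\alpha|}{c\,|\log(-\lambda(\alpha))|}\int_{\R^N}W|\phi_\alpha|^p\dx\,.
\]
By Proposition \ref{Prop:conv_la_al}, $\lambda(\alpha)=o(\alpha)$, so $0<-\lambda(\alpha)<\alpha<1$ for small $\alpha$, whence $|\log\alpha|\le|\log(-\lambda(\alpha))|$ and the prefactor on the right is at most $1$; since moreover $\int_{\R^N}W|\phi_\alpha|^p\dx>0$ for small $\alpha$, the right-hand side is bounded below by $-\tfrac1c\int_{\R^N}W|\phi_\alpha|^p\dx$. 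Finally, as in the proofs of Theorem \ref{thm-linear} and Proposition \ref{Prop:conv_la_al}, $\phi_\alpha\to\phi_0$ in $L^\infty_\loc(\R^N)$ and $W$ is compactly supported, so $\int_{\R^N}W|\phi_\alpha|^p\dx\to\int_{\R^N}W|\phi_0|^p\dx$; taking $\liminf_{\alpha\to0_+}$ yields the assertion with $C=1/c$, which is independent of $W$.

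The only genuinely new ingredient relative to Propositions \ref{prop:LB1}--\ref{prop:LB2} is the sharp logarithmic lower bound on $\|\phi_\alpha\|_p^p$, made possible by the precise pointwise estimate of Lemma \ref{Lem:LEst2} together with the observation that the integration domain must be kept fixed; the mildly delicate auxiliary point, which I expect to be the only real obstacle in writing the argument cleanly, is the passage from $|\log(-\lambda(\alpha))|$ to $|\log\alpha|$, which rests on the superlinear decay of $\lambda(\alpha)$ established in Proposition \ref{Prop:conv_la_al}.
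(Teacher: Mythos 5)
Your proposal is correct and follows essentially the same route as the paper: both rest on the pointwise lower bound of Lemma \ref{Lem:LEst2}, the observation that $p\nu_0=N$ makes $\int_{B_R^c}|v_{\alpha,\beta}|^p\dx$ logarithmically divergent over the \emph{fixed} exterior domain, and Proposition \ref{Prop:conv_la_al} to trade $-\lambda(\alpha)$ for $\alpha$. The only (cosmetic) difference is that the paper inserts the bound $\mu_\alpha\beta\le\alpha^{1/p}$ into the exponent \emph{before} integrating and then invokes the incomplete gamma asymptotics, whereas you integrate first and pass from $|\log(-\lambda(\alpha))|$ to $|\log\alpha|$ afterwards.
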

\begin{proof} Note that in this case we always have $p<\frac{N+1}{2}$. As we see in the proof of Proposition \ref{prop:LB2}, there exist positive constants $\beta$ and $C(V,N,p)$,
 independent  of  $\alpha,W$, such that
 \begin{align*} 
  v_{\alpha,\beta}(x):= |x|^{-\frac{N-p}{p-1}} \mathrm{exp}\left(-\left(\frac{{\lambda(\alpha)}}{1-p}\right)^{1/p}\beta |x|\right)  \leq C \phi_{\alpha} \ \ \mbox{on} \ B_{R}^c
\end{align*} 
for sufficiently small $\alpha >0$. Using Proposition \ref{Prop:conv_la_al}, we infer that $\lambda(\alpha)\beta^p/(1-p) \leq {\alpha}$. Hence, we have
\begin{align*} 
   |x|^{-\frac{N-p}{p-1}} \mathrm{exp}\left(-{\alpha^{1/p}}|x| \right)  \leq C \phi_{\alpha} \ \ \mbox{on} \ B_{R}^c \,.
\end{align*}
Now we replace the estimate in \eqref{Eq1} by the following one
 \begin{align*}
\|\phi_{\alpha}\|_p^p \geq  \int_{ B_R^c}\phi_{\alpha}^p \dx   \geq        
 \int_R^\infty \exp(-\ga^{1/p} r)r^{-1}\dr  = \Gg(0,\ga^{1/p})  \sim (-\log(\ga^{1/p}) - \gamma ) \,,
 \end{align*}
 as $\ga \to 0$, where the above well known asymptotic formula for the incomplete gamma function can be found in \cite[Equation (6.5.15) and (5.1.11)]{as}, and $\gg$ is the Euler constant which is positive. So, the right hand side of the above estimate is bigger than a positive constant multiple of $|\log \alpha|$.   Using this estimate in \eqref{Eq:lam_al_2}, we get
\begin{align*}
\lambda(\alpha) \geq - \alpha \frac{\int_{\R^N}W |\phi_{\alpha}^p| \dx}{\|\phi_{\alpha}\|_p^p} \geq -C \frac{\alpha}{|\log \alpha|} \int_{\R^N}W |\phi_{\alpha}^p| \dx \,,
\end{align*}
when $\alpha >0$ is small enough.
Hence, the proposition follows by taking $\alpha \rightarrow 0_+$. 
\end{proof}

\begin{rem} 
{\em $(i)$ Observe that for $p=2$ and $N=3$, we have $\frac{p(p-1)}{N-p}=2$. Thus, the lower estimate in Proposition \ref{prop:LB1} corresponds to
$$ \liminf_{\alpha \rightarrow 0_+}\frac{\lambda(\alpha)}{\alpha^2} \geq -C \left(\int_{\R^N} W|\phi_0|^2 \dx \right)^2  \,.$$
Therefore, in the view of \cite{KS}, the lower bound in Proposition \ref{prop:LB2} is sharp. Also, when $p=2$ and $N=4$, it can be verified that the lower bound in Proposition \ref{prop:LB2} is sharp by comparing it with the corresponding result in \cite{KS}.

$(ii)$ Indeed, the lower bounds in Propositions \ref{prop:LB1}, \ref{prop:LB2}, and \ref{prop:LB3} are sharp. This can be seen from the upper bounds that we obtain in the next section; see Propositions \ref{prop: upperb-1} and \ref{prop: upperb-2}.

$(iii)$ Although Propositions \ref{prop:LB1}, \ref{prop:LB2}, and \ref{prop:LB3} have additional restrictions on the values of $p\in [\sqrt{N},N)$, the three theorems together provide a complete picture of the lower bound for $\lambda(\alpha)$ as $\alpha \rightarrow 0$ for all $p\in [\sqrt{N},N)$. To see this, it is enough to consider the case $1<p <2$, otherwise we get the lower bound of $\lambda(\alpha)$ from propositions \ref{prop:LB1} and \ref{prop:LB3}. Note that if $1<p <2$, then the dimension $N$ can be either $2$ or $3$ (as $p^2\geq N$). If $N=3$, then the condition $p<\frac{N+1}{2}$ of Proposition \ref{prop:LB2} is automatically satisfied and therefore we get the lower bound of $\lambda(\alpha)$ from propositions \ref{prop:LB2} and \ref{prop:LB3}. Now, if $N=2$, then for $p \geq 2-\frac{1}{N}=\frac{3}{2}$  we obtain the lower bound of $\lambda(\alpha)$ from Proposition \ref{prop:LB1}, where as for $p<\frac{3}{2}=\frac{N+1}{2}$ we get the same from propositions \ref{prop:LB2} and \ref{prop:LB3}. }
\end{rem}


 \subsection{\bf Upper bounds}\label{sec_ub} 

 In this section we provide upper bounds of $\gl(\ga)$ as $\alpha \rightarrow 0$. In view of these upper bounds and the lower bounds obtained in the previous section, it follows that we have sharp two sided estimates for $\gl(\ga)$ as $\alpha \rightarrow 0$ for all $p<N\leq p^2$. 

\begin{prop} \label{prop: upperb-1}
Let $1< p< N < p^2$ and let $V\in  L^1(\R^N)\cap L^{\infty}(\R^N)$ be critical in $\R^N$  
satisfying \eqref{eq_Fuchs}. Suppose further that $W\in L^1(\R^N, \, \phi_0^p\dx)$ satisfies \eqref{condition}. Then
there exists a positive constant $K=K(N,p,V)$ such that 	
\begin{equation}
\limsup_{\alpha\to 0+}  \alpha^{-\frac{p(p-1)}{N-p}}\,  \lambda(\alpha) \leq -K\, \Big( \int_{\R^N} W \phi_0^p\dx\Big)^{\frac{p(p-1)}{N-p}}\, .
\end{equation}
\end{prop}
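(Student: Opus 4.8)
The plan is to establish the upper bound $\limsup_{\alpha\to 0+} \alpha^{-\frac{p(p-1)}{N-p}}\lambda(\alpha) \leq -K (\int W\phi_0^p)^{\frac{p(p-1)}{N-p}}$ by constructing, for each small $\alpha$, an explicit test function $u_\alpha \in W^{1,p}(\R^N)$ whose Rayleigh quotient $Q_{\alpha W}[u_\alpha]/\|u_\alpha\|_p^p$ is bounded above by the claimed quantity up to the error order. The natural ansatz is $u_\alpha = \phi_0\, \eta(\cdot/\rho_\alpha)$, where $\eta$ is a fixed cut-off function equal to $1$ on $B_1$ and supported in $B_2$, and $\rho_\alpha\to\infty$ is a scale parameter to be optimized at the end. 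Since $\phi_0$ solves $-\Delta_p\phi_0 + V|\phi_0|^{p-2}\phi_0 = 0$, the leading behaviour of $Q_0[u_\alpha]$ comes entirely from the cut-off region $\rho_\alpha \leq |x| \leq 2\rho_\alpha$, and the $-\alpha\int W|u_\alpha|^p$ term is just $-\alpha\int W\phi_0^p$ for $\rho_\alpha$ large enough that $\mathrm{supp}\,W \subset B_{\rho_\alpha}$.

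First I would compute the numerator. Using the two-sided estimate \eqref{two-sided-pr} for the simplified energy with $v = \eta(\cdot/\rho_\alpha)$, one gets $Q_0[\phi_0\eta(\cdot/\rho_\alpha)] \asymp \int \phi_0^2 |\nabla v|^2(v|\nabla\phi_0| + \phi_0|\nabla v|)^{p-2}\,dx$, and on the annulus $|\nabla v| \asymp \rho_\alpha^{-1}$ while $\phi_0 \asymp \rho_\alpha^{(p-N)/(p-1)}$ and $|\nabla\phi_0| \asymp \rho_\alpha^{(p-N)/(p-1) - 1}$ by Proposition \ref{prop-Lp}; since $\phi_0|\nabla v| \asymp \rho_\alpha^{(p-N)/(p-1)-1} \asymp v|\nabla\phi_0|$ on that region, the integrand is $\asymp \rho_\alpha^{2(p-N)/(p-1)}\rho_\alpha^{-2}\cdot\rho_\alpha^{(p-2)((p-N)/(p-1)-1)}$, and integrating over the annulus of volume $\asymp \rho_\alpha^N$ yields $Q_0[u_\alpha] \asymp \rho_\alpha^{N - p + p(p-N)/(p-1)} = \rho_\alpha^{\frac{p-N}{p-1}}$, which tends to $0$ since $N>p$. (Alternatively one avoids \eqref{two-sided-pr} and estimates $\int_{\mathrm{ann}} |\nabla(\phi_0\eta)|^p + V\phi_0^p\eta^p$ directly, using that $\int_{|x|>\rho}|V|\phi_0^p \to 0$ because $V$ satisfies \eqref{eq_Fuchs} and $V\in L^1\cap L^\infty$; the outcome is the same power $\rho_\alpha^{(p-N)/(p-1)}$ for the gradient part.) For the denominator, $\|u_\alpha\|_p^p \geq \int_{B_{\rho_\alpha}}\phi_0^p \asymp \int_1^{\rho_\alpha} r^{p(p-N)/(p-1)} r^{N-1}\,dr \asymp \rho_\alpha^{N + p(p-N)/(p-1)} = \rho_\alpha^{\frac{p(p-1) - p(N-p) \cdot (\text{...})}{\text{...}}}$; the exponent is $N - \frac{p(N-p)}{p-1} > 0$ precisely because $N < p^2$, so $\|u_\alpha\|_p^p \to \infty$. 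Denote this exponent $\gamma := N - \frac{p(N-p)}{p-1}$, so $\|u_\alpha\|_p^p \asymp \rho_\alpha^{\gamma}$ and $Q_0[u_\alpha]\asymp \rho_\alpha^{-\gamma + (\text{lower})}$—in fact one checks $\frac{p-N}{p-1} = \gamma - N \cdot 0$... more carefully $Q_0[u_\alpha] \asymp \rho_\alpha^{(p-N)/(p-1)}$ and one verifies $(p-N)/(p-1) = -\gamma$ is generally false, so I keep them as separate powers $\rho_\alpha^{\sigma}$ with $\sigma = (p-N)/(p-1) < 0$ and $\rho_\alpha^\gamma$ with $\gamma>0$.

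Then I would assemble: for $\rho_\alpha$ large enough to contain $\mathrm{supp}\,W$,
\begin{equation*}
\lambda(\alpha) \leq \frac{Q_0[u_\alpha] - \alpha\int_{\R^N}W\phi_0^p\,dx}{\|u_\alpha\|_p^p} \leq \frac{C_1\rho_\alpha^{\sigma} - \alpha\, m}{C_2\,\rho_\alpha^{\gamma}}, \qquad m := \int_{\R^N}W\phi_0^p\,dx > 0,
\end{equation*}
and now optimize over $\rho_\alpha$. Choosing $\rho_\alpha = (\kappa/\alpha)^{\theta}$ for suitable $\theta>0$ balances the two competing effects; the optimal scaling makes $\alpha\,\rho_\alpha^{-\gamma}$ and $\rho_\alpha^{\sigma - \gamma}$ of the same order, giving $\rho_\alpha \asymp \alpha^{1/\sigma}$ and hence $\lambda(\alpha) \leq -K\, m^{\frac{p(p-1)}{N-p}}\,\alpha^{\frac{p(p-1)}{N-p}}(1+o(1))$ after identifying the exponent: $\alpha\cdot\rho_\alpha^{-\gamma}$ with $\rho_\alpha = c\,\alpha^{(p-1)/(p-N)}$ gives exponent $1 + \gamma\cdot\frac{p-1}{N-p} = \frac{p(p-1)}{N-p}$, which matches. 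One then maximizes the resulting constant over the free parameter $c$ (equivalently $\kappa$) to extract the best $K$; this constant depends only on $p, N$ and the comparison constants in \eqref{eq_est}, hence only on $(N,p,V)$ as claimed.

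The main obstacle I anticipate is controlling the cross terms and the annulus contribution in $Q_0[u_\alpha]$ with the correct power of $\rho_\alpha$ and uniformly in $\alpha$ — in particular, justifying that the gradient energy on $\{\rho_\alpha < |x| < 2\rho_\alpha\}$ really is $O(\rho_\alpha^{(p-N)/(p-1)})$ and not larger, and that the contribution $\int_{|x|>\rho_\alpha}|V|\phi_0^p$ is negligible compared to $\alpha\, m$ at the optimal scale. Using the simplified-energy equivalence \eqref{two-sided-pr} is the cleanest route: it reduces everything to an explicit integral of known powers of $|x|$ over an annulus, sidestepping the need to expand $|\nabla(\phi_0\eta)|^p$ and integrate by parts against $-\Delta_p\phi_0$. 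A secondary technical point is that \eqref{two-sided-pr} requires the right-hand side to be finite, which holds here since $u_\alpha$ is compactly supported and $\phi_0 \in W^{1,p}_{\mathrm{loc}}$. Once the annulus integral is pinned down, the optimization over $\rho_\alpha$ is elementary calculus and the matching of exponents with $\frac{p(p-1)}{N-p}$ is a direct computation using $\nu_0 = (N-p)/(p-1)$.
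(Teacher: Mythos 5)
Your proposal is correct and follows essentially the same route as the paper: a test function of the form $\phi_0$ times a cutoff living at the scale $\rho_\alpha\asymp\alpha^{-(p-1)/(N-p)}$, the simplified-energy equivalence \eqref{two-sided-pr} to reduce $Q_0$ to explicit power integrals over the transition region, and an optimization of the free constant in the scale (the paper takes the profile $f\big(t\,\alpha^{(p-1)/(N-p)}x\big)$ with exponential tails and optimizes over $t$, which is the same computation). Two small repairs: since the numerator is negative at the optimal scale, you must divide by the matching \emph{upper} bound $\|u_\alpha\|_p^p\le C\rho_\alpha^{\gamma}$ rather than the lower bound (both hold, as $u_\alpha\le\phi_0$ is supported in $B_{2\rho_\alpha}$ and $N<p^2$), and since $W$ is only assumed to lie in $L^1(\R^N,\phi_0^p\dx)$ the step ``$\supp W\subset B_{\rho_\alpha}$'' should be replaced by dominated convergence, $\int_{\R^N}W\,\eta^p(\cdot/\rho_\alpha)\,\phi_0^p\dx\to\int_{\R^N}W\phi_0^p\dx$, exactly as in \eqref{potential-term}.
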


\begin{proof} 
Below we use the symbol $m(\alpha) \, \lesssim\,  M(\alpha)$ to indicate that there exists 
a constant $c>0$, independent of $\alpha$ and $W$, such that $m(\alpha)  \leq c\, M(\alpha) $ for all $\alpha >0$. 

\smallskip

\noindent To prove the desired estimate we will apply a test function argument. Let 
$$
f(x) = 
\begin{cases}
1  &\quad  \text{if}\quad  |x| \leq 1  \,,
				\\[4pt]	
e^{1-|x|} &\quad \text{if}\quad   1 < |x|\, ,
\end{cases}
$$
and define 
\begin{equation} \label{f-alpha}
f_{\alpha,t}(x) = f\big( t\, \alpha^{\frac{p-1}{N-p}}\, x\big)
\end{equation}
where $t>0$ is arbitrary. 
 Then, by monotone convergence,
\begin{equation} \label{potential-term}
\lim_{\alpha\to 0}   \int_{\R^N} W f^p_{\alpha,t} \, \phi_0^p\dx = \int_{\R^N} W \phi_0^p\dx =:\omega >0,
\end{equation}
for any $t>0$.
On the other hand, by \eqref{two-sided-pr} 
\begin{equation}  \label{Q0-upperb-1} 
Q_0[f_{\alpha, t}\, \phi_0] \lesssim 
\begin{cases}
\int_{\R^N} |\nabla f_{\alpha, t}|^p\,  \phi_0^p\dx  + \int_{\R^N} |\nabla f_{\alpha, t}|^2\, f_{\alpha,t}^{p-2}\,  \phi_0^2\, |\nabla \phi_0|^{p-2} \dx &
\mbox{ if }  p>2, \\[2mm]
 \int_{\R^N} |\nabla f_{\alpha, t}|^p\,  \phi_0^p\dx  &
\mbox{ if } p\leq 2.    
\end{cases}
\end{equation}

 Let 
$$
R_\alpha= t^{-1} \alpha^{\frac{p-1}{p-N}}\, .
$$
In view of  \eqref{eq_est} it follows that, as $\alpha\to 0$, 
\begin{equation} \label{calcul}
\begin{aligned}
\int_{\R^N}  f_{\alpha, t}^p\,  \phi_0^p\dx  & \, \asymp\,  \int_1^{R_\alpha} r^{\frac{p^2-N}{p-1}}\, \frac{\dr}{r} + \int_{R_\alpha}^\infty \exp\big(-t p\, \alpha^{\frac{p-1}{N-p}}\, r\big)\, r^{\frac{p^2-N}{p-1}}\, \frac{\dr}{r}
\\[4pt]
&\,  \asymp\, 
\alpha^{\frac{p^2-N}{p-N}}\, t^{\frac{N-p^2}{p-1}} + \alpha^{\frac{p^2-N}{p-N}}\, t^{\frac{N-p^2}{p-1}} \int_1^\infty e^{-s}\,  s^{\frac{p^2-N}{p-1}}\, \frac{\ds}{s} \\
&\,  \asymp\, 
\alpha^{\frac{p^2-N}{p-N}}\, t^{\frac{N-p^2}{p-1}}\, .
\end{aligned}
\end{equation}
Similarly, from \eqref{eq_est} and from the bound
\begin{equation}\label{phi0-grad}
|\nabla \phi_0(x)|\,  \lesssim\,  |x|^{\frac{p-N}{p-1}-1}  \qquad \text{as} \ \  |x|\to \infty,
\end{equation}
see \cite[Lem.~2.6]{fp},  we get
\begin{align*}
\int_{\R^N} |\nabla f_{\alpha, t}|^p\,  \phi_0^p\dx  & \,  \lesssim\, \alpha^{ \frac{p(p-1)}{N-p}}\, t^p \int_{\R^N}  f_{\alpha, t}^p\,  \phi_0^p\dx \\[4pt]
\int_{\R^N} |\nabla f_{\alpha, t}|^2\, f_{\alpha,t}^{p-2}\,  \phi_0^2\, |\nabla \phi_0|^{p-2} \dx  & \, \lesssim\,  \alpha\, t^{ \frac{(N-p)}{p-1}}\, .
\end{align*}
Hence 
\begin{align*}
\lambda(\alpha) & \leq \frac{Q_0[f_{\alpha, t}\, \phi_0] - \alpha  \int_{\R^N} W f^p_{\alpha,t} \, \phi_0^p\dx }{\int_{\R^N}  f_{\alpha, t}^p\,  \phi_0^p\dx }  \lesssim \,  \alpha^{ \frac{p(p-1)}{N-p}}\, \big(t^p - \omega\,  t^{\frac{p^2-N}{p-1}}\big)
\end{align*}
for all $\alpha>0$ and all $t>0$. Now the claim follows by optimizing in $t$.
\end{proof}

\begin{prop} \label{prop: upperb-2}
Let $N  = p^2$ and assume that $V$ and $W$ satisfy assumptions of Proposition \ref{prop: upperb-1}. Then 
there exists a positive constant $K=K(N,V)$ such that 	
\begin{equation}
\limsup_{\alpha\to 0+}  \frac{|\log \alpha|}{\alpha}\,   \lambda(\alpha) \leq -K \int_{\R^N} W \phi_0^p\dx \, .
\end{equation}
\end{prop}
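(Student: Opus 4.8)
The plan is to follow the same test-function strategy as in Proposition~\ref{prop: upperb-1}, but to track carefully what happens at the borderline exponent $N=p^2$, where the $L^p$-norm of $f_{\alpha,t}\,\phi_0$ picks up a logarithm. First I would fix the same profile
$$
f(x)=\begin{cases} 1 & |x|\le 1,\\ e^{1-|x|} & |x|>1,\end{cases}
$$
but now set $f_{\alpha,t}(x)=f\big(t\,g(\alpha)\,x\big)$ with a scale $g(\alpha)\to 0$ to be chosen so that the competing terms in the Rayleigh quotient balance; the natural guess, reading off the exponents below, is $g(\alpha)=\sqrt{\alpha/|\log\alpha|}$ up to constants. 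As in \eqref{potential-term}, monotone convergence gives $\int_{\R^N} W f^p_{\alpha,t}\,\phi_0^p\dx\to\omega:=\int_{\R^N} W\phi_0^p\dx>0$ for every fixed $t>0$.

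Next I would compute the three relevant integrals using the two-sided bound $\phi_0(x)\asymp\jap{x}^{(p-N)/(p-1)}$ from \eqref{eq_est} and the gradient bound \eqref{phi0-grad}. Since $N=p^2$, the exponent $\frac{p^2-N}{p-1}$ appearing in the radial integrand of $\int f_{\alpha,t}^p\phi_0^p\dx$ is exactly $0$, so
$$
\int_{\R^N} f_{\alpha,t}^p\,\phi_0^p\dx \;\asymp\; \int_1^{R_\alpha}\frac{\dr}{r} + \int_{R_\alpha}^\infty e^{-t p\, g(\alpha) r}\,\frac{\dr}{r} \;\asymp\; \log R_\alpha \;\asymp\; |\log\alpha|,
$$
with $R_\alpha=(t\,g(\alpha))^{-1}$, exactly the incomplete-Gamma asymptotics used in the proof of Proposition~\ref{prop:LB3}. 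For the Dirichlet term, $|\nabla f_{\alpha,t}|\lesssim t\,g(\alpha)$ supported in $B_{R_\alpha}^c$, so $\int |\nabla f_{\alpha,t}|^p\phi_0^p\dx\lesssim (t\,g(\alpha))^p\int_{R_\alpha}^\infty e^{-tp g(\alpha) r}\frac{\dr}{r}\lesssim (t\,g(\alpha))^p$, which with $g(\alpha)^2=\alpha/|\log\alpha|$ is $\lesssim (t^2\alpha/|\log\alpha|)^{p/2}$; when $p\ge 2$ this is dominated by $t^2\alpha/|\log\alpha|$ (for small $\alpha$), and I expect the mixed term in \eqref{Q0-upperb-1} for $p>2$, namely $\int |\nabla f_{\alpha,t}|^2 f_{\alpha,t}^{p-2}\phi_0^2|\nabla\phi_0|^{p-2}\dx$, to be of order $t^2\,g(\alpha)^2=t^2\alpha/|\log\alpha|$ as well, after the change of variables $s=t\,g(\alpha) r$ (here the integrand's radial power is again critical, contributing the $|\log|$; one checks the constant is $V$- but not $W$-dependent). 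Thus $Q_0[f_{\alpha,t}\phi_0]\lesssim t^2\,\alpha/|\log\alpha|$.

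Putting the pieces together,
$$
\lambda(\alpha)\le \frac{Q_0[f_{\alpha,t}\phi_0]-\alpha\int_{\R^N} W f^p_{\alpha,t}\phi_0^p\dx}{\int_{\R^N} f_{\alpha,t}^p\phi_0^p\dx} \;\lesssim\; \frac{t^2\,\alpha/|\log\alpha| - \alpha\,\omega}{|\log\alpha|},
$$
wait — I should be more careful: the denominator is $\asymp|\log\alpha|$, and the numerator is $\lesssim \alpha\,t^2/|\log\alpha| - c\,\alpha\,\omega$ for a fixed $c>0$ coming from \eqref{potential-term}; hence for $t$ small enough and $\alpha$ small the numerator is $\le -\tfrac{c}{2}\alpha\,\omega$, giving $\lambda(\alpha)\le -K\,\alpha\,\omega/|\log\alpha|$ with $K=K(N,V)>0$ independent of $W$. (More precisely one keeps $t$ fixed, divides by $\alpha/|\log\alpha|$, lets $\alpha\to 0$, and then sends $t\to 0$.) The main obstacle I anticipate is the bookkeeping of the mixed gradient term for $p>2$: one must verify that after rescaling its leading contribution really is $\asymp t^2\alpha/|\log\alpha|$ (not a larger power of $|\log\alpha|$ or a different power of $\alpha$) and that the implied constant depends only on $N,p,V$; this hinges on the sharp two-sided bounds for $\phi_0$ and $|\nabla\phi_0|$ from \cite{fp} and on the exponent identity $\frac{2(p-N)}{p-1}+(p-2)\big(\frac{p-N}{p-1}-1\big)+ (N-1) = -1$ that produces the logarithm at $N=p^2$. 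Once that is checked, the argument is essentially the one above.
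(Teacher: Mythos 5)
Your strategy is the paper's (same profile $f$, same decomposition \eqref{Q0-upperb-1}, same use of \eqref{eq_est}--\eqref{phi0-grad} and of \eqref{potential-term}), but your modified scaling $g(\alpha)=\sqrt{\alpha/|\log\alpha|}$ creates a genuine gap. With that choice the Dirichlet term is $\int|\nabla f_{\alpha,t}|^p\phi_0^p\dx\asymp (t\,g(\alpha))^p=t^p(\alpha/|\log\alpha|)^{p/2}$, and for the numerator to end up $\le -c\,\alpha\,\omega$ you need this to be $o(\alpha)$, i.e.\ $\alpha^{p/2-1}=o(|\log\alpha|^{p/2})$. That holds only for $p\ge 2$; you say so yourself, but the proposition also covers $N=2$, $p=\sqrt2$ and $N=3$, $p=\sqrt3$, where $\alpha^{p/2-1}\to\infty$ and your bound degenerates (sending $t\to 0$ at fixed $\alpha$ does not repair this, since the offending factor blows up in $\alpha$ for every fixed $t$). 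The paper avoids the issue by keeping the scaling of Proposition \ref{prop: upperb-1}, namely $f_{\alpha,t}(x)=f(t\,\alpha^{1/p}x)$ as in \eqref{f-alpha}: then all gradient terms are $\asymp t^p\alpha$, i.e.\ of the \emph{same} order as the potential term, the quotient is $\lesssim \alpha(t^p-\omega)/|\log\alpha|$, and one concludes by taking $\alpha\to 0$ first and then $t\to 0$. That argument is uniform in $1<p<N=p^2$.

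A secondary error: your exponent identity for the mixed term is wrong. At $N=p^2$ the total radial power in $\int|\nabla f_{\alpha,t}|^2f_{\alpha,t}^{p-2}\phi_0^2|\nabla\phi_0|^{p-2}\dx$ is $\frac{2(p-N)}{p-1}+(p-2)\bigl(\frac{p-N}{p-1}-1\bigr)+N-1=1-p$, not $-1$; there is no logarithm there, and after rescaling the term is $\asymp(t\,g(\alpha))^p$, not $(t\,g(\alpha))^2$. Since the mixed term only appears for $p>2$, this does not sink your argument in the regime where it works, but it should be corrected. With the scaling $\alpha^{1/p}$ it again gives exactly $t^p\alpha$, matching the paper's general bound $\alpha\,t^{(N-p)/(p-1)}$.
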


\begin{proof} 
We follow the proof of Proposition \ref{prop: upperb-1}. Since $N=p^2$, using the family of functions defined \eqref{f-alpha} in combination with \eqref{eq_est} and \eqref{phi0-grad} we 
deduce from \eqref{Q0-upperb-1} that
\begin{equation} 
Q_0[f_{\alpha, t}\, \phi_0] \lesssim  \, \alpha\, t^p\, .
\end{equation}
with a constant which depends only on $V$ and $N$. Similarly, we get
\begin{align*}
\int_{\R^N}  f_{\alpha, t}^p\,  \phi_0^p\dx  & \, \asymp\,  \int_1^{R_\alpha} \, \frac{\dr}{r} + \int_{R_\alpha}^\infty \exp\big(-t p\, \alpha^{\frac{p-1}{N-p}}\, r\big)\,  \frac{\dr}{r}
  \asymp\, -\log t - \frac{p-1}{N-p}\, \log \alpha \, ,
\end{align*}
as $\alpha\to 0+$.
This together with \eqref{potential-term} implies that for any $t\in(0,1)$ and $\alpha$ small enough
\begin{align*}
\lambda(\alpha) & \leq \frac{Q_0[f_{\alpha, t}\, \phi_0] - \alpha  \int_{\R^N} W f^p_{\alpha,t} \, \phi_0^p\dx }{\int_{\R^N}  f_{\alpha, t}^p\,  \phi_0^p\dx } \, \lesssim \,  \frac{\alpha (t^p-\omega)}{-\log \alpha-\log t} , 
\end{align*}
Hence 
\begin{align*}
\limsup_{\alpha\to 0+}  \frac{|\log \alpha|}{\alpha}\,   \lambda(\alpha)& \lesssim t^p-\omega\, ,
\end{align*}
and the claim follows by letting $t \to 0+$.
\end{proof}

Finally, combining the lower bounds from Section \ref{sec_lb} and upper bounds from Section \ref{sec_ub}, we prove Theorem \ref{thm-superlinear}.
\begin{proof}[\bf Proof of Theorem \ref{thm-superlinear}]
The lower bound for the first assertion of the theorem (i.e., when 
$p<N<p^2$) is obtained from either Proposition~\ref{prop:LB1} or Proposition~\ref{prop:LB2}, depending on the specific case. The corresponding upper bound is provided by Proposition~\ref{prop: upperb-1}. To establish the second assertion of the theorem (i.e., when $N=p^2$), we apply Proposition~\ref{prop:LB3} for the lower bound and Proposition~\ref{prop: upperb-2} for the upper bound.
\end{proof}

 \begin{rem} 
 {\em In Proposition \ref{prop:LB-new} we present a variational proof of the lower bound in \eqref{N<p^2} which works without assuming that $V$ and $W$ are compactly supported. However, the latter works only if $2\leq p$.}
 \end{rem}
\begin{rem}
{\em Based on the estimates given in Theorem \ref{thm-superlinear}, it seems natural to expect that given a critical $V$ that decays fast enough, there exist $C_1(p,N,V)$ and $C_2(p,V)$ such that 
\begin{align*} 
\lim_{\alpha\to 0+} \alpha^{-\frac{p(p-1)}{N-p}}\, \lambda(\alpha) &= C_1(p,N,V) \, \Big( \int_{\R^N} W \phi_0^p\, \dx\Big)^{\frac{p(p-1)}{N-p}}\,  \qquad p<N < p^2\\[8pt]
\lim_{\alpha\to 0+} \frac{\log \alpha}{\alpha}\,    \lambda(\alpha)   &= C_2(p,V) \, \int_{\R^N} W \phi_0^p\, \dx  \qquad   \qquad \qquad\ \   N=p^2. 
\end{align*}
Establishing the existence of the limit and determining the values of the coefficients 
$C_1$ and $C_2$ remain open problems.}
\end{rem}

\appendix
\section{\bf Alternative proof of the lower bound for $p\geq 2$}
\noindent In the case $p\geq 2$ we have an alternative way to find an order sharp lower bound on $\lambda(\alpha)$.

\begin{prop} \label{prop:LB-new}
Let $2\leq p<N \leq p^2$. Suppose that $V\in L^\infty(\R^N)\cap L^1(\R^N)$  is critical in $\R^N$ and satisfies condition \eqref{eq_Fuchs}. Let  $W \in L^1(\R^N, \phi_0^p(x)\, \dx)$ satisfies \eqref{condition}. Then there exists a constant $C=C(p,N,V)>0$  such that 
\begin{equation} \label{lowerb-new}
\liminf_{\alpha \rightarrow 0_+} \alpha^{-\frac{p(p-1)}{N-p}}\lambda(\alpha)  \geq -C \left(\int_{\R^N} W(x) \, \phi_0^p(x) \dx\right)^{{\frac{p(p-1)}{N-p}}}  \,.
\end{equation}
 \end{prop}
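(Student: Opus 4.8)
The statement to be proved is the lower bound \eqref{lowerb-new} for $2\le p<N\le p^2$ \emph{without} assuming $V,W$ compactly supported. The plan is to replace the comparison-principle/pointwise-bound machinery of Propositions \ref{prop:LB1}--\ref{prop:LB3} with a purely variational argument that directly estimates the $L^p$-norm of the minimizer $\phi_\alpha$ from below. First I would invoke Lemma \ref{lem-minimizer} (whose hypotheses are met since $V$ satisfies \eqref{eq_Fuchs}, hence $V\in L^1\cap L^\infty\subset L^{N/p}\cap L^q$ for a suitable $q$) to get a positive minimizer $\phi_\alpha\in W^{1,p}(\R^N)$ normalized by $\phi_\alpha(0)=1$, so that
\begin{equation} \label{plan-rq}
\lambda(\alpha)=\frac{Q_{\alpha W}[\phi_\alpha]}{\|\phi_\alpha\|_p^p}\ \ge\ -\,\alpha\,\frac{\int_{\R^N}W\phi_\alpha^p\dx}{\|\phi_\alpha\|_p^p},
\end{equation}
and, as in the proof of Theorem \ref{thm-linear} and Proposition \ref{Prop:conv_la_al}, the Harnack convergence principle gives $\phi_\alpha\to\phi_0$ in $L^\infty_{\loc}(\R^N)$; combined with a uniform bound $\phi_\alpha\le C\phi_0$ (from minimal growth at infinity plus \eqref{eq_est}) and $W\in L^1(\R^N,\phi_0^p\dx)$, this yields $\int W\phi_\alpha^p\dx\to\int W\phi_0^p\dx=:\omega>0$. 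Thus everything reduces to the lower bound $\|\phi_\alpha\|_p^p\gtrsim(-\lambda(\alpha))^{\nu_0-N/p}$ with $\nu_0=(N-p)/(p-1)$.

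The heart of the argument is to obtain this blow-up rate of $\|\phi_\alpha\|_p^p$ variationally rather than pointwise. The idea is to use the two-sided estimate \eqref{two-sided-pr} for the simplified energy functional. Since $Q_{\alpha W-\lambda(\alpha)}[\phi_\alpha]=0$, writing $\phi_\alpha=v_\alpha\phi_0$ we get from \eqref{two-sided-pr} that the simplified energy of $v_\alpha$ is controlled by $\int_{\R^N}(\alpha W+\lambda(\alpha))\,\phi_\alpha^p\dx$ plus lower-order terms, and on $B_R^c$ (outside a fixed ball containing the bulk of $W$) this is comparable to $-\lambda(\alpha)\int_{B_R^c}\phi_\alpha^p\dx$. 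Since $p\ge 2$ the simplified energy dominates $\int\phi_0^p|\nabla v_\alpha|^p\dx$ (the term $(v|\nabla\phi_0|+\phi_0|\nabla v|)^{p-2}\ge(\phi_0|\nabla v|)^{p-2}$), so one obtains
\begin{equation} \label{plan-grad}
\int_{\R^N}\phi_0^p\,|\nabla v_\alpha|^p\dx\ \lesssim\ (-\lambda(\alpha))\,\|\phi_\alpha\|_p^p\ +\ \text{(bounded terms)}.
\end{equation}
Then I would run a Hardy/Sobolev-type interpolation: using \eqref{eq_est}, $\phi_0^p\asymp\jap x^{-p\nu_0}$ and $\int_{B_R^c}\phi_0^p\,|\nabla v_\alpha|^p\dx$ controls, via a weighted one-dimensional Hardy inequality in the radial variable, the $L^p(\phi_0^p\dx)$-norm of $(v_\alpha-1)$ (or of $v_\alpha$) on annuli, producing a differential/integral inequality for the function $\alpha\mapsto\|\phi_\alpha\|_p^p$ that forces $\|\phi_\alpha\|_p^p\gtrsim(-\lambda(\alpha))^{-(N/p-\nu_0)}=(-\lambda(\alpha))^{\nu_0-N/p}$ as $\alpha\to0_+$ (note $N/p-\nu_0=(N-p^2+p)/(p(p-1))>0$ in the relevant range). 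Feeding this into \eqref{plan-rq} gives $\lambda(\alpha)\ge -C\alpha(-\lambda(\alpha))^{N/p-\nu_0}\omega$, i.e. $(-\lambda(\alpha))^{1-(N/p-\nu_0)}\le C\alpha\,\omega$, and since $1-(N/p-\nu_0)=(N-p)/(p(p-1))\cdot\ldots$ unwinds to exponent $\frac{p(p-1)}{N-p}$ on the reciprocal, rearranging yields exactly \eqref{lowerb-new}.

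\textbf{Main obstacle.} The delicate point is Step two: extracting the \emph{sharp} blow-up exponent $\nu_0-N/p$ for $\|\phi_\alpha\|_p^p$ from the energy inequality \eqref{plan-grad} without a pointwise lower bound on $\phi_\alpha$. One must control $v_\alpha$ from below on a far-away sphere $\partial B_{R_\alpha}$ with $R_\alpha\sim(-\lambda(\alpha))^{-1/p}$ using only the energy bound and the normalization $v_\alpha\asymp1$ near $\partial B_R$; this requires a careful weighted Poincaré-type inequality on the annulus $B_{R_\alpha}\setminus B_R$ whose constant degenerates at the correct rate, and is precisely where the hypothesis $p\ge2$ is used (for $p<2$ the factor $(v|\nabla\phi_0|+\phi_0|\nabla v|)^{p-2}$ degenerates where $|\nabla v|$ is large, and one genuinely needs the compact-support/comparison-principle route of the main text). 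A secondary technical nuisance is justifying the uniform domination $\phi_\alpha\le C\phi_0$ and the convergence of $\int W\phi_\alpha^p\dx$ under the weaker integrability hypothesis $W\in L^1(\R^N,\phi_0^p\dx)$ rather than $W\in C_c$, which I would handle by splitting $W=W\mathbf 1_{B_\rho}+W\mathbf 1_{B_\rho^c}$, using local uniform convergence on $B_\rho$ and the tail smallness of $\int_{B_\rho^c}|W|\phi_0^p\dx$ together with $\phi_\alpha\le C\phi_0$.
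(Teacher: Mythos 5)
Your reduction (Lemma \ref{lem-minimizer}, the Rayleigh-quotient inequality, convergence of $\int W\phi_\alpha^p\dx$ to $\omega>0$, the ground-state substitution $\phi_\alpha=v_\alpha\phi_0$, and the use of \eqref{two-sided-pr} with $p\ge 2$ to extract $\int\phi_0^p|\nabla v_\alpha|^p\dx$) coincides with the framework of the paper's Appendix proof, and your final exponent arithmetic is correct. But the decisive quantitative step — the one you yourself label the ``main obstacle'' — is not carried out, and the one concrete mechanism you name for it does not work as stated: a weighted Hardy inequality bounding the $L^p(\phi_0^p\dx)$-norm of $v_\alpha-1$ by $\|\phi_0\nabla v_\alpha\|_p$ cannot hold globally, since $(v_\alpha-1)\phi_0=\phi_\alpha-\phi_0\notin L^p(\R^N)$ when $N\le p^2$ (take $v\equiv 0$ to see the inequality fail outright). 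The ``differential/integral inequality for $\alpha\mapsto\|\phi_\alpha\|_p^p$'' is asserted, not produced, so the claimed blow-up rate $\|\phi_\alpha\|_p^p\gtrsim(-\lambda(\alpha))^{\nu_0-N/p}$ remains unproved. This is a genuine gap, not a technicality: it is exactly the content of the proposition.

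For comparison, the paper closes this step differently and, in doing so, avoids any lower bound on $\|\phi_\alpha\|_p^p$ altogether. Writing $\phi_\alpha=f_\alpha\phi_0$ and setting $m_\alpha=(\min_{B_1}f_\alpha)^{-1}\to 1$, one multiplies $f_\alpha$ by a cut-off $\chi$ equal to $m_\alpha$ on $B_1$ and vanishing on $\partial B_R$; the cost of the cut-off is $c\,m_\alpha^p R^{-p}\|\phi_\alpha\|_p^p$. Since $\chi f_\alpha\ge 1$ on $B_1$ and $=0$ outside $B_R$, the quantity $\|\phi_0\nabla(\chi f_\alpha)\|_p^p$ dominates the weighted capacity of $B_1$ in $B_R$ with weight $|x|^{p(p-N)/(p-1)}$, which is computed explicitly to be $\asymp(R^\nu-1)^{1-p}$ with $\nu=(N-p)/(p-1)^2$. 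This yields $\lambda(\alpha)\ge\big(c\,m_\alpha^{-p}(R^\nu-1)^{1-p}-\alpha\int W\phi_\alpha^p\dx\big)\,\|\phi_\alpha\|_p^{-p}-c\,R^{-p}$ for every $R>1$, and choosing $R=R_\alpha$ so that the numerator vanishes makes the unknown denominator irrelevant and gives $\lambda(\alpha)\ge -c\,R_\alpha^{-p}$ with $R_\alpha\gtrsim(\alpha\omega)^{(p-1)/(p-N)}$, which is \eqref{lowerb-new}. If you want to salvage your route, this explicit capacity computation with the subsequent optimization in $R$ is precisely the rigorous substitute for the degenerate Poincar\'e inequality you were seeking; without it (or an equivalent), your argument is incomplete.
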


\begin{proof}
In the proof below we denote by $c$ a generic positive constant whose value might change from line to line, and which may depend on $N,p$ and $V$ but 
not on $W$.

\smallskip

\noindent Let $\phi_0>0$ be the Agmon ground state of $Q_0$ normalized so that $\phi_0(0)=1$. 
Assume that $\phi_{\alpha} \in W^{1,p}(\R^N)$ is the minimizer of $\lambda(\alpha)$ with $\phi_{\alpha}>0$ and $\phi_{\alpha}(0)=1$. We write
$\phi_\alpha = f_\alpha\, \phi_0$ with $f_\alpha >0$. 
As in the proof of Theorem \ref{thm-linear} we conclude that $\phi_{\alpha} \rightarrow \phi_0$ in $L^{\infty}_{\loc}(\R^N)$. Hence
\begin{equation} \label{f-alpha-conv} 
f_\alpha \, \to \ 1 \qquad \text{in} \quad L^\infty_{\rm loc}(\R^N)\
\end{equation}
as $\alpha\to 0$. By \eqref{two-sided-pr}, 
$$
Q_0[\phi_\alpha] \ \geq\  c \int_{\R^N} \phi_0^p\, |\nabla f_\alpha|^p\dx \, .
$$
Let
\begin{equation} \label{m-alpha}
m_\alpha= \big( \min_{B_1} f_\alpha\big)^{-1}\
\end{equation} 
and note that $m_\alpha \geq  (f_\alpha(0))^{-1}= 1$. 
Now let $R>1$ be a number whose value will be specified later, and let  $\chi: \R^N\to \R$ be a cut-off function defined by 
$$
\chi(x) = m_\alpha \quad \text{if} \quad |x|\leq 1, \qquad \ \ \chi(x) = m_\alpha\, \Big( \frac{R-|x|}{R-1}\Big)_+  \quad \text{if} \quad |x|\geq 1\, .
$$ 
Since $\chi\leq m_\alpha$ and $|\nabla \chi|\leq  m_\alpha (R-1)^{-1}$, it follows that
\begin{equation} 
\| \phi_0  \nabla (\chi f_\alpha) \|_p^p\,  \leq\,  c\, m_\alpha^p  \| \phi_0  \nabla f_\alpha \|_p^p + c\, m_\alpha^p\, R^{-p}\, \| \phi_\alpha\|_p^p\, ,
\end{equation}
and hence 
$$
\| \phi_0 \nabla f_\alpha \|_p^p\, \geq\, c\, m_\alpha^{-p}  \| \phi_0  \nabla (\chi f_\alpha) \|_p^p - c \, R^{-p}\, \| \phi_\alpha\|_p^p\, .
$$
Altogether we obtain
\begin{align}  \label{lowerb-1-cap}
\lambda(\alpha) & = \frac{Q_0[\phi_\alpha] -\alpha \int_{\R^N} W\, \phi_\alpha^p\dx }{\| \phi_\alpha\|_p^p} \, \geq \, \frac{c\,  m_\alpha^{-p}   \| \phi_0  \nabla (\chi f_\alpha)\|_p^p  - \alpha \int_{\R^N} W\, \phi_\alpha^p\dx }{\| \phi_\alpha\|_p^p} -c\, R^{-p}\ .
\end{align} 
Let 
$$
\mathcal{F}_R = \big\{ u\in W^{1,p}(B_R\setminus B_1): u|_{\partial B_1} \geq 1 , \ u|_{\partial B_R}  =0\big\}\, .
$$
Since $\chi f_\alpha \geq 1$ for $|x|\leq 1$ and $\chi f_\alpha= 0$  for $|x|\geq R$, we can mimic the calculation of the capacity of the ball of radius one in the ball of radius $R$, see \cite[Sec.2.2.4]{m}. Using \eqref{eq_est} we estimate $ \| \phi_0  \nabla (\chi f_\alpha)\|_p^p$ as follows; 
\begin{align}  \label{lowerb-2-cap}
 \| \phi_0  \nabla (\chi f_\alpha)\|_p^p\  &\geq \ c  \inf_{u\in \mathcal{F}_R } \int_{B_R\setminus B_1} |\nabla u|^p\, |x|^{\frac{p(p-N)}{p-1}}\dx  
  \geq \ c\, |\s_N|   \inf_{u\in \mathcal{F}_R } \int_1^R  |\partial_\rho u|^p\, \rho^{d-1}\, d\rho\, , 
\end{align} 
where 
$$
d= \frac{p^2-N}{p-1}\, .
$$
A straightforward calculation  shows that the last integral in \eqref{lowerb-2-cap} attains its minimum at 
$$
u_0(\rho)= \frac{R^\nu-  \rho^\nu}{R^\nu-  1}\, \qquad \text{with} \quad \nu :=\frac{N-p}{(p-1)^2} >0.
$$
Inserting this into \eqref{lowerb-2-cap} gives
\begin{align} 
 \| \phi_0  \nabla (\chi f_\alpha)\|_p^p\  &  \geq \ c\,    \big( R^\nu-  1\big)^{1-p}
\end{align} 
with $c$ independent of $R$. Then, in view of \eqref{lowerb-1-cap},  
\begin{align}  
\lambda(\alpha)\  & \geq \frac{c\  m_\alpha^{-p} \big( R^\nu-  1\big)^{1-p}   - \alpha \int_{\R^N} W\, \phi_\alpha^p\dx }{\| \phi_\alpha\|_p^p} -c\, R^{-p}\, .
\end{align}
Now we chose $R=R_\alpha$ with $R_\alpha$ given by 
\begin{equation} 
c\  m_\alpha^{-p} \big( R_\alpha^\nu-  1\big)^{1-p} = \alpha \int_{\R^N} W\, \phi_\alpha^p\dx \, ,
\end{equation}
which implies
\begin{equation} \label{lowerb-3-cap}
\lambda(\alpha)\, \geq\,  - c\, R_\alpha^{-p}\, .
\end{equation} 
Since $m_\alpha\to 1$ by \eqref{f-alpha-conv}, and $ \int_{\R^N} W\, \phi_\alpha^p\dx \, \to  \int_{\R^N} W\, \phi_0^p\dx $, we have
$$
R_\alpha \, \geq\, c\,  \Big(\alpha  \int_{\R^N} W\, \phi_0^p\Big)^{\frac{p-1}{p-N}}
$$
with $c$ independent of $\alpha$, and the claim follows from \eqref{lowerb-3-cap}.
\end{proof}

\begin{center}
	{\bf Acknowledgments} 
\end{center}
U.D. is partially supported by the Basque Government through the BERC 2022-2025 program and by the Spanish Ministry of Science and Innovation: BCAM Severo Ochoa accreditation CEX2021-001142-S/MICIN/AEI/\allowbreak10.13039/501100011033 and CNS2023-143893. U.D. acknowledges that this project was initiated during his postdoc at Technion, Haifa, Israel, where he received
support from the Israel Science Foundation (grant 637/19) founded by the Israel Academy of Sciences and Humanities, and also from the Lady Davis Foundation. The authors also want to thank Marie-Fran\c{c}oise Bidaut-V\'{e}ron for the helpful reference \cite{LZ}.


\bigskip

\end{document}